\begin{document}

\title{Notes on the arithmetic of Hilbert modular forms}

\author{\bf A. Raghuram \ \ \and \ \ Naomi Tanabe}
\date{\today}
\subjclass{11F67; 11F41, 11F70, 11F75, 22E55}

\maketitle


\def\q{\mathfrak{q}}
\def\p{\mathfrak{p}}
\def\l{\mathfrak{l}}
\def\u{\mathfrak{u}}
\def\a{\mathfrak{a}}
\def\m{\mathfrak{m}}
\def\n{\mathfrak{n}}
\def\g{\mathfrak{g}}
\def\k{\mathfrak{k}}
\def\z{\mathfrak{z}}
\def\h{{\mathfrak h}}
\def\gl{\mathfrak{gl}}
\def\sl{\mathfrak{sl}}

\def\Ext{{\rm Ext}}
\def\Hom{{\rm Hom}}
\def\Ind{{\rm Ind}}

\def\GL{{\rm GL}}
\def\SL{{\rm SL}}
\def\SO{{\rm SO}}
\def\O{{\rm O}}

\def\R{\mathbb{R}}
\def\C{\mathbb{C}}
\def\Z{\mathbb{Z}}
\def\Q{\mathbb{Q}}
\def\A{\mathbb{A}}

\def\w{\wedge}

\def\Cat{\mathcal{C}}
\def\HC{{\rm HC}}
\def\HCat{\Cat^\HC}
\def\proj{{\rm proj}}

\def\to{\rightarrow}
\def\To{\longrightarrow}

\def\1{1\!\!1}
\def\dim{{\rm dim}}

\def\th{^{\rm th}}
\def\isom{\approx}

\def\CE{\mathcal{C}\mathcal{E}}
\def\E{\mathcal{E}}

\def\dis{\displaystyle}
\def\f{{\bf f}}                 
\def\T{{\rm T}}              
\def\omegatil{\tilde{\omega}}  
\def\H{\mathcal{H}}            
\def\Dif{\mathfrak{D}}      
\def\W{W^{\circ}}           
\def\Whit{\mathcal{W}}      
\def\ringO{\mathcal{O}}     
\def\S{\mathcal{S}}      
\def\M{\mathcal{M}}      
\def\K{{\rm K}}          
\def\h{\mathfrak{h}}     
\def\norm{{\rm N}}       


\numberwithin{equation}{section}
\newtheorem{thm}[equation]{Theorem}
\newtheorem{cor}[equation]{Corollary}
\newtheorem{lemma}[equation]{Lemma}
\newtheorem{prop}[equation]{Proposition}
\newtheorem{con}[equation]{Conjecture}
\newtheorem{ass}[equation]{Assumption}
\newtheorem{defn}[equation]{Definition}
\newtheorem{rem}[equation]{Remark}
\newtheorem{exer}[equation]{Exercise}
\newtheorem{exam}[equation]{Example}


\section{Introduction}
\label{sec:intro}

Shimura proved the following fundamental result (see \cite[Theorem 4.3]{shimura-duke}) on the critical values of the standard $L$-function attached to a holomorphic Hilbert modular form. 

\begin{thm}[Shimura]
\label{thm:shimura}
Let ${\bf f}$ be a primitive holomorphic Hilbert modular cusp form of type $(k, \psi)$ over a totally real number field $F$ of degree $n$. Assume that  
the weight $k = (k_1,\dots,k_n)$ satisfies the parity condition
$$
k_1 \equiv \cdots \equiv k_n \pmod{2}. 
$$
Let $k^0 = {\rm min}(k_1,\dots,k_n)$ and $k_0 = {\rm max}(k_1,\dots,k_n)$. 
There exist nonzero complex numbers $u(r, {\bf f}^{\sigma})$ defined for $r \in \Z^n/2\Z^n$ and $\sigma \in {\rm Aut}(\C)$ such that 
for any Hecke character $\chi$ of $\A_F^{\times}$ of finite order, for any integer 
$m$ with 
$$
(k_0 - k^0)/2 < m < (k_0+k^0)/2, 
$$
and for any $\sigma \in {\rm Aut}(\C)$, we have 
$$
\sigma\left(\frac{L_f(m, {\bf f}, \chi)} {(2\pi i)^{mn} \, \tau(\chi) \, u(\epsilon, {\bf f})}\right) \ = \ 
\frac{L_f(m, {\bf f}^{\sigma}, \chi^{\sigma})} {(2\pi i)^{mn} \, \tau(\chi^{\sigma}) \, u(\epsilon, {\bf f^{\sigma}})}, 
$$
where $\epsilon$ is prescribed by: $\chi(a) = {\rm sgn}[a^{\epsilon} N(a)^m]$; the quantity $\tau(\chi)$ is the Gauss sum attached to $\chi$, and 
$L_f(s, {\bf f}, \chi)$ is the (finite part of the) standard $L$-function attached to ${\bf f}$, twisted by $\chi$. 
\end{thm}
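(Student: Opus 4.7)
The plan is to reinterpret the $L$-values cohomologically and then exploit the $\mathrm{Aut}(\C)$-equivariance of the rational structure on cuspidal cohomology. First I would pass from the classical form ${\bf f}$ to its adelic avatar and to the associated cuspidal automorphic representation $\pi = \pi_\infty \otimes \pi_f$ of $\GL_2(\A_F)$; the archimedean component $\pi_\infty$ is a tensor product of (essentially) discrete series of weights $k_1,\dots,k_n$, and $\pi_f$ carries a natural $\Q(\pi_f)$-rational structure coming from its new vector. The action of $\sigma \in \mathrm{Aut}(\C)$ transports $\pi$ to the representation attached to ${\bf f}^\sigma$.

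Next, ${\bf f}$ gives rise to a class $[\omega_{\bf f}]$ in the cuspidal cohomology $H^n_{!}(S_{K_f}, \widetilde{\M}_\mu)$ of the Hilbert modular variety of level $K_f$, where $\M_\mu$ is the algebraic representation of $\GL_2^n$ of highest weight $\mu = \mu(k)$ determined by the weight $k$. The $\pi_f$-isotypic component is, up to normalization, free of rank $2^n$ over $\Q(\pi_f)$, with a basis indexed by characters $\epsilon \in \Z^n/2\Z^n$ of the component group of the archimedean maximal compact. The periods $u(\epsilon, {\bf f})$ are then defined, up to $\Q(\pi_f)^\times$, by comparing a Betti rational structure on the $\epsilon$-isotypic component with a normalized Whittaker rational structure.

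The core of the argument is an integral representation of Birch--Hecke type: for critical $m$, the finite part $L_f(m, {\bf f}, \chi)$ equals, up to standard factors, a Mellin integral of the Whittaker function of ${\bf f}$ against $\chi\|\cdot\|^{m-1/2}$ over $F^\times\backslash \A_F^\times$, which one rewrites as the pairing of $[\omega_{\bf f}]$ with a relative cycle built from $\chi$ and a torus subvariety of $S_{K_f}$. The desired identity then amounts to showing that after dividing by $(2\pi i)^{mn}\,\tau(\chi)\,u(\epsilon, {\bf f})$ this pairing lies in $\Q(\pi_f, \chi)$ and is $\sigma$-equivariant; the factor $(2\pi i)^{mn}$ absorbs the archimedean zeta integrals, while $\tau(\chi)$ comes from the finite-part conductor adjustment.

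The principal difficulties are threefold. First, one must correctly match the sign $\epsilon$ to $\chi$ via the prescription $\chi(a) = \mathrm{sgn}[a^\epsilon N(a)^m]$, which requires tracking the action of the component group of $F_\infty^\times$ on both the Whittaker model and the $\chi$-cycle. Second, the Galois-equivariance of the Gauss sum, in the form $\sigma(\tau(\chi)) = \chi^\sigma(u_\sigma)\tau(\chi^\sigma)$ for a suitable cyclotomic unit $u_\sigma$, must be propagated through the integral representation. Third, one needs $\sigma$-rationality of the Whittaker new vector at the finite places and of the cohomology class $[\omega_{\bf f}]$. Conceptually the hardest step is the clean definition of $u(\epsilon, {\bf f})$ and the verification that it is well defined up to $\Q(\pi_f)^\times$, since all subsequent rationality statements are calibrated against it.
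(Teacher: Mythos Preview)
Your proposal is correct and follows essentially the same strategy as the paper: pass to the cuspidal automorphic representation, define periods by comparing the Whittaker rational structure on $\pi_f$ (via the normalized new vector) with the Betti rational structure on its realization in cuspidal cohomology $H^n_{\rm cusp}(S^G,\E_\mu^{\sf v})(\epsilon)$, interpret the Mellin integral as the pairing of the resulting cohomology class with a $\GL_1$-cycle (the paper does this by pulling back along $\iota:\GL_1\hookrightarrow\GL_2$ and applying a map $\mathcal{T}^*$ to trivialize coefficients), compute the archimedean zeta integrals to extract the power of $2\pi i$, and then invoke $\mathrm{Aut}(\C)$-equivariance of each step.

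The one organizational difference worth noting is that the paper does not treat all critical $m$ and all $\chi$ in one blow as you sketch. Instead it first proves the result only for the central point $s=1/2$ of $L(s,\Pi)$ with the trivial twist and the single sign $\epsilon=(+,\dots,+)$ (this is its Theorem~1.2), and then obtains the general statement for arbitrary critical $m$ and finite-order $\chi$ as a formal corollary of the period relations $p^{\epsilon\cdot\epsilon_\xi}(\Pi\otimes\xi)\sim \mathcal{G}(\xi)\,p^{\epsilon}(\Pi)$ established in Raghuram--Shahidi, applied to $\xi=\chi\cdot|\ |^m$. This buys a cleaner archimedean computation (one only has to evaluate the zeta integral of the lowest-weight vector at $s=1/2$, giving $d_\infty=\sum_j(a_j+1)$) and avoids having to track the sign $\epsilon$ and the Gauss sum through the main identity; those bookkeeping issues you flag as difficulties are instead absorbed once and for all into the period-relation machinery. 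Your direct approach would work too, but you would then have to carry out the archimedean integral for each $m$ and keep the $\chi$-twist inside the cycle, which is messier.
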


The purpose of this semi-expository article is to give another proof of the above theorem, which is rather different from Shimura's proof. However, before proceeding any further, let us mention that our proof is contained in the  union of these papers: Harder \cite{harder-general}, Hida \cite{hida-duke}; see also Dou \cite{dou}. What is different from these papers is an organizational principle based on the period relations proved in 
Raghuram-Shahidi \cite{raghuram-shahidi-imrn} while working in the context of regular algebraic cuspidal automorphic representations. 
The point of view taken in this article is that one need only prove an algebraicity  theorem for the most interesting $L$-value, namely, the central critical value for the standard $L$-function of a sufficiently general type of a cuspidal automorphic representation; this theorem says that the central critical value divided by a suitable power of $2\pi i$ and also divided by a suitable period attached to the representation lies in the rationality field of the representation. 
The period relations mentioned above tell us what happens to these periods upon twisting the underlying representation by algebraic Hecke characters, which then gives us a result for {\it all} critical values of any such twisted $L$-function. This is analogous to Hida \cite[Theorem 8.1]{hida-duke} as well as 
Harder \cite[Theorem on p.86]{harder-general}.

Next, to see that this theorem indeed gives Shimura's theorem `on the nose', one needs the dictionary between holomorphic Hilbert modular forms and automorphic representations of ${\rm GL}_2$ over a totally real number field $F$. The dictionary is certainly well-known to experts, however, it is difficult to find all the details in the literature. Most treatments assume at some point that the narrow class number of $F$ is one, while Shimura makes no such restrictions on $F$. Besides, we could not find anywhere the answer to the question: is the dictionary ${\rm Aut}(\C)$-equivariant? Some of the standard books on Hilbert modular forms like Freitag \cite{freitag}, Garrett \cite{garrett} or van der Geer \cite{geer} do not have what we want; although Garrett's book has a definitive treatment of the action of ${\rm Aut}(\C)$ on spaces of Hilbert modular forms--which is called the `arithmetic structure theorem' in his book. This is our second motivation to write this article; we write down such a dictionary, give enough details to make the presentation self-contained, and also analyze its arithmetic properties. (We refer the reader to 
Blasius-Rogawski \cite{blasius-rogawski} and Harris \cite{harris} for some intimately related arithmetic issues about Hilbert modular forms.) 

\smallskip

We now describe the theorems proved in this paper in greater detail, toward which we need some notation. 
Let $F$ be a totally real field of degree $n$, and let $\A_F$ be the ad\`ele ring. Let $G = {\rm Res}_{F/\Q}(\GL_2)$. 
Given a regular algebraic cuspidal automorphic representation $\Pi$ of 
$G(\A_{\Q}) = {\rm GL}_2(\A_F)$, one knows (from Clozel \cite{clozel}) that there is a pure dominant integral weight 
$\mu$ such that $\Pi$ has a nontrivial contribution to the cohomology of some locally symmetric space of $G$   
with coefficients coming from the dual of the finite dimensional representation with highest weight $\mu$. 
We denote this as $\Pi \in {\rm Coh}(G, \mu^{\sf v})$, for $\mu \in X_0^+(T)$, where $T = {\rm Res}_{F/\Q}(T_2)$ and $T_2$ is the diagonal torus of 
${\rm GL}_2$. Under this assumption on $\Pi$, one knows that its rationality field
${\mathbb Q}(\Pi)$ is a number field, that $\Pi_f$ is defined over this number field, and for any $\sigma \in {\rm Aut}(\C)$ the representation 
${}^{\sigma}\Pi$ is also a regular algebraic cuspidal automorphic representation of ${\rm GL}_2(\A_F)$. 
It is further known that the Whittaker model of $\Pi_f$ carries a ${\mathbb Q}(\Pi)$-structure. Similarly, for every 
$\epsilon = (\epsilon_1,\dots,\epsilon_n) \in \Z^n/2\Z^n = \{\pm\}^n$, a suitable cohomological realization of $\Pi_f$ depending on $\epsilon$  
also carries a rational structure. One defines a period $p^{\epsilon}(\Pi)$ by comparing these rational structures. Such periods came up originally in the works of Eichler and Shimura; in our context, these periods are the same as in the papers of Harder and Hida alluded to above. The periods are defined simultaneously for ${}^{\sigma}\Pi$ for any $\sigma \in {\rm Aut}(\C)$. The first main theorem proved in this article is:

\begin{thm}[Central critical value]
\label{thm:central}
Let $\Pi$ be a regular algebraic cuspidal automorphic representation of $\GL_2(\A_F)$, for a totally real number field $F$ of degree $n = n_F = [F:\Q]$. 
Assume that $s = 1/2$ is critical for the standard $L$-function $L(s,\Pi)$ attached to $\Pi$.  
Then for any $\sigma \in {\rm Aut}(\C)$ we have 
$$
\sigma \left( \frac{L_f(\frac12, \Pi)} {(2\pi i)^{d_{\infty}} p^{(+,\dots, +)}(\Pi)} \right) \ = \ 
\frac{L_f(\frac12, {}^{\sigma}\Pi)} {(2\pi i)^{d_{\infty}} p^{(+,\dots, +)}({}^{\sigma}\Pi)}, 
$$
where $d_{\infty} = d(\Pi_{\infty}) = d({}^{\sigma}\Pi_{\infty})$ is an integer determined by the representation at infinity; see 
Proposition~\ref{prop:archimedean-mess}.

In particular, 
$$
L_f(1/2, \Pi) \ \sim_{\Q(\Pi)} \  (2\pi i)^{d_{\infty}} p^{(+,\dots, +)}(\Pi), 
$$
where, by $\sim_{\Q(\Pi)}$, we mean up to an element of the number field $\Q(\Pi)$.
\end{thm}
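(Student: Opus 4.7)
The approach is to realize $L_f(1/2,\Pi)$ as the value of a canonical cohomological pairing and to read off the period $p^{(+,\dots,+)}(\Pi)$ as the discrepancy between the Whittaker and Betti rational structures on cuspidal cohomology. Everything is manifestly $\mathrm{Aut}(\C)$-equivariant except for an archimedean constant, whose computation is the technical heart of the argument and is recorded in Proposition~\ref{prop:archimedean-mess}.

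First I would fix the cohomological realization of $\Pi$. Since $\Pi \in \mathrm{Coh}(G,\mu^{\sf v})$, its finite part $\Pi_f$ appears in $H^{n_F}_{\mathrm{cusp}}(S_G,\mathcal{M}_{\mu^{\sf v},\C})$, and after decomposing the $\Pi$-isotypic component under the action of $\pi_0(K_\infty)\cong\{\pm\}^{n_F}$ one obtains one line for each $\epsilon$. On that line live two $\Q(\Pi)$-rational structures: one transported from the Whittaker model (via Clozel's theorem together with the Casselman--Wallach rationality of $\Pi_f$), the other coming from Betti cohomology with $\Q(\Pi)$-coefficients in $\mathcal{M}_{\mu^{\sf v}}$. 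By definition, the ratio is $p^{\epsilon}(\Pi)$, and the construction is natural in $\sigma\in\mathrm{Aut}(\C)$: transporting via $\sigma$ turns it into the analogous data for ${}^\sigma\Pi$.

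Second, I would invoke the Hecke--Jacquet--Langlands integral
\[
Z(s,\phi)\;=\;\int_{F^\times\backslash\A_F^\times}\phi(\mathrm{diag}(t,1))\,|t|^{s-1/2}\,d^\times t, \qquad \phi\in\Pi,
\]
which, after local unfolding, produces $L_f(s,\Pi)$ times archimedean local factors. At the central value $s=1/2$, the integration over $F^\times\backslash\A_F^\times$ has the shape of a cap product with the fundamental class of a closed cycle $S_{T_1}\hookrightarrow S_G$ coming from the Levi torus $T_1=\mathrm{Res}_{F/\Q}\GL_1$. This yields a linear functional on the $(+,\dots,+)$-isotypic cuspidal cohomology whose value on the image of a $\Q(\Pi)$-rational Whittaker vector equals $L_f(1/2,\Pi)$ up to an explicit archimedean constant. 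The cycle $S_{T_1}$ together with the Betti coefficients gives a pairing that is intrinsically defined over $\Q(\Pi)$ and hence $\mathrm{Aut}(\C)$-equivariant; this is the engine that transfers the identity from $\Pi$ to ${}^\sigma\Pi$.

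Third, the archimedean constant is what Proposition~\ref{prop:archimedean-mess} controls: the pairing of the canonical class in $(\g_\infty,K^{\circ}_\infty)$-cohomology representing the $(+,\dots,+)$-component of $\Pi_\infty$ against the archimedean local zeta integral at $s=1/2$ is a nonzero $\Q$-multiple of $(2\pi i)^{d_\infty}$, with $d_\infty=d(\Pi_\infty)=d({}^\sigma\Pi_\infty)$ determined solely by the archimedean type. Combining the three steps, $L_f(1/2,\Pi)/\!\bigl((2\pi i)^{d_\infty}\,p^{(+,\dots,+)}(\Pi)\bigr)$ is a $\Q(\Pi)$-rational number, and applying $\sigma$ commutes with every ingredient (the choice of rational Whittaker vector, the Betti class, the cycle $S_{T_1}$, the cap product, and the archimedean constant), giving the displayed reciprocity. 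The main obstacle is genuinely the archimedean calculation at each real place of $F$: one must compute an explicit discrete-series pairing and verify both the nonvanishing and the exact power of $2\pi i$ that emerges. Once this local input is in hand, the global identity follows by assembling the local data with the finite-part rationality of the Whittaker model.
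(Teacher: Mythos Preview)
Your proposal is correct and follows essentially the same strategy as the paper: construct a rational Whittaker vector, push it to a cuspidal cohomology class via the comparison isomorphism (incurring the period $p^{(+,\dots,+)}(\Pi)$), pull back along the torus embedding $\iota:\GL_1\hookrightarrow\GL_2$, pass to trivial coefficients, and integrate to recover $L_f(1/2,\Pi)$ up to the archimedean constant computed in Proposition~\ref{prop:archimedean-mess}. The only point you might sharpen is the role of the criticality hypothesis: in the paper it is used precisely to guarantee $\mathrm{Hom}_{\GL_1(F_\infty)}(E_\mu^{\sf v},1\!\!1)\neq 0$, which is what lets you replace $\iota^*\E_\mu^{\sf v}$ by the constant sheaf before integrating; also, the cycle $S_{T_1}$ is non-compact (its components are copies of $\R_{>0}^n$), so one works with compactly supported cohomology rather than a closed cycle in the literal sense.
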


Section~\ref{sec:l-value} is devoted to giving a reasonably self-contained proof of the above theorem. 
The following result on all critical values for twisted $L$-functions follows from the period relations proved in \cite{raghuram-shahidi-imrn}.

\begin{cor}[All critical values]
\label{cor:all}
Let $\Pi$ be a regular algebraic cuspidal automorphic representation of $\GL_2(\A_F)$, for a totally real number field $F$ of degree $n = n_F = [F:\Q]$. 
Let $\eta_1,\dots,\eta_n$ be all the infinite places of $F$. 
Assume that $s = \frac12 + m \in \frac12 + \Z$ is critical for the standard $L$-function $L(s,\Pi)$ attached to $\Pi$.  
Then, for any finite order character $\chi$ of $F^{\times}\backslash \A_F^{\times}$, and for any $\sigma \in {\rm Aut}(\C)$ we have 
$$
\sigma \left( \frac{L_f(\frac12 + m, \Pi \otimes \chi)} {(2\pi i)^{d_{\infty}+nm} \, p^{((-1)^m\epsilon_{\chi})}(\Pi) \, \mathcal{G}(\chi)} \right) \ = \ 
\frac{L_f(\frac12 +m, {}^{\sigma}\Pi \otimes {}^{\sigma}\chi)} {(2\pi i)^{d_{\infty}+nm} \, p^{((-1)^m\epsilon_{ {}^{\sigma}\!\chi})}({}^{\sigma}\Pi) 
\, \mathcal{G}({}^{\sigma}\chi)}, 
$$
where $\epsilon_{\chi} = (\chi_{\eta_1}(-1),\dots,\chi_{\eta_n}(-1))$ is the `parity' of $\chi$ determined completely by 
$\chi_{\infty} = \otimes_{j=1}^n \chi_{\eta_j}$; and $\mathcal{G}(\chi)$ is the Gauss sum of $\chi$.

In particular, 
$$
L_f(1/2+m, \Pi \otimes \chi ) \ \sim_{\Q(\Pi, \chi)} \  (2\pi i)^{d_{\infty}+nm} p^{((-1)^m\epsilon_{\chi})}(\Pi) \mathcal{G}(\chi), 
$$
where, by $\sim_{\Q(\Pi, \chi)}$, we mean up to an element of the  compositum of $\Q(\Pi)$ and $\Q(\chi)$.
\end{cor}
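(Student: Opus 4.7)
\medskip

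\noindent\textbf{Proof proposal for Corollary~\ref{cor:all}.}

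The plan is to deduce the corollary from Theorem~\ref{thm:central} by a two-step twisting argument: first twist $\Pi$ by the algebraic Hecke character $\chi\cdot|\cdot|^m$ so that the value $L_f(\tfrac12+m,\Pi\otimes\chi)$ becomes the central critical value of the twisted representation, and then invoke the period relations of Raghuram--Shahidi \cite{raghuram-shahidi-imrn} to transport everything back to invariants attached to $\Pi$ itself.

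More precisely, set $\Pi':=\Pi\otimes(\chi\cdot|\cdot|^m)$. Since twisting by a finite-order character and by an integer power of the adelic norm preserves the regular algebraic cuspidal property, $\Pi'$ is again a regular algebraic cuspidal automorphic representation of $\GL_2(\A_F)$, and the shift identity
\[
L_f(\tfrac12+m,\Pi\otimes\chi) \;=\; L_f(\tfrac12,\Pi')
\]
shows that $s=1/2$ is critical for $L(s,\Pi')$. Theorem~\ref{thm:central} applied to $\Pi'$ then gives the Galois equivariance
\[
\sigma\!\left(\frac{L_f(\tfrac12,\Pi')}{(2\pi i)^{d(\Pi'_{\infty})}\,p^{(+,\dots,+)}(\Pi')}\right)
\;=\;
\frac{L_f(\tfrac12,{}^{\sigma}\Pi')}{(2\pi i)^{d(\Pi'_{\infty})}\,p^{(+,\dots,+)}({}^{\sigma}\Pi')}.
\]
The next step is to compute $d(\Pi'_{\infty})$ and $p^{(+,\dots,+)}(\Pi')$ in terms of the data attached to $\Pi$ and $\chi$. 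For the archimedean exponent, twisting by $|\cdot|^m$ shifts the infinity type by $m$ at each of the $n$ archimedean places, and twisting by the finite-order $\chi$ does not affect $d(\Pi_{\infty})$; this should yield $d(\Pi'_{\infty})=d_{\infty}+nm$. For the period, I would invoke the period relation of \cite{raghuram-shahidi-imrn}, which for an algebraic Hecke character $\xi$ with signature $\epsilon_{\xi}$ says
\[
p^{\epsilon}(\Pi\otimes\xi)\;\sim_{\Q(\Pi,\xi)}\;\mathcal{G}(\xi)\cdot p^{\epsilon\cdot\epsilon_{\xi}}(\Pi),
\]
applied with $\xi=\chi\cdot|\cdot|^m$ and $\epsilon=(+,\dots,+)$. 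Since $|\cdot|^m$ contributes the sign $(-1)^m$ at each infinite place and has trivial Gauss sum, one gets $\epsilon_{\xi}=(-1)^m\epsilon_{\chi}$ and $\mathcal{G}(\xi)\sim\mathcal{G}(\chi)$ (up to $\Q(\chi)^{\times}$), whence
\[
p^{(+,\dots,+)}(\Pi')\;\sim_{\Q(\Pi,\chi)}\;\mathcal{G}(\chi)\,p^{((-1)^m\epsilon_{\chi})}(\Pi).
\]
Substituting this and the value of $d(\Pi'_{\infty})$ into the displayed Galois equivariance, and using that the same identities hold ${\rm Aut}(\C)$-equivariantly (so the analogous computation applies to ${}^{\sigma}\Pi'={}^{\sigma}\Pi\otimes({}^{\sigma}\chi\cdot|\cdot|^m)$), yields the stated formula.

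The main obstacle is bookkeeping: one must verify on the nose that the period relations of \cite{raghuram-shahidi-imrn} combine correctly with the shift by $|\cdot|^m$, in particular that the signs track to $(-1)^m\epsilon_{\chi}$ and that the archimedean integer shifts by exactly $nm$. Everything else---cuspidality and regular algebraicity of $\Pi'$, the criticality transfer, and the $\sigma$-equivariance of the shift $\Pi\mapsto\Pi\otimes(\chi\cdot|\cdot|^m)$---is formal.
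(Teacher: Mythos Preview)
Your proposal is correct and follows essentially the same route as the paper's own proof: reduce to the central critical value by twisting, then invoke the Raghuram--Shahidi period relations. The only cosmetic difference is that the paper first twists by $|\cdot|^m$ alone (recording $d(\Pi_\infty\otimes|\cdot|^m)=d_\infty+nm$ and $p^{+\!+}(\Pi\otimes|\cdot|^m)=p^{(-1)^m(+\!+)}(\Pi)$) and then treats the finite-order twist by $\chi$ as a second application of the period relations, whereas you handle both twists in one stroke via $\xi=\chi\cdot|\cdot|^m$; since the period relations in \S\ref{sec:period-relations} are stated for arbitrary algebraic Hecke characters, either packaging works.
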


To see that the corollary exactly corresponds to Shimura's Theorem~\ref{thm:shimura} above, we need to know the dictionary between primitive holomorphic Hilbert modular forms for $F$ and regular algebraic cuspidal automorphic representations of $\GL_2$ over $F$. The latter part of this paper 
is devoted to this dictionary and its arithmetic properties. The main statements are summarized in the theorem below. 

\begin{thm}[The dictionary]
\label{thm:dictionary}
There is a bijection ${\bf f} \leftrightarrow \Pi$ between 
\begin{itemize}
\item ${\bf f} \in S_k(\mathfrak{n}, \omega)_{\rm prim}$, that is  ${\bf f}$ is a primitive holomorphic Hilbert modular form of weight $k = (k_1,\dots,k_n)$, 
of level $\mathfrak{n}$ and nebentypus character $\omega$ which is a character of $(\mathcal{O}_F/\mathfrak{n})^{\times}$; by primitive we mean it is an 
eigenform for all Hecke operators $T_{\p}$, a newform, and it is normalized as ${\rm c}(\mathcal{O}_F, {\bf f}) = 1$.
\item $\Pi$ is a cuspidal automorphic representation of $\GL_2(\A_F)$ whose representation at infinity 
$\Pi_{\infty} = \otimes_j D_{k_j-1}$, of conductor $\mathfrak{n}$, and central character $\omega_{\Pi} = \tilde{\omega}$--the adelization of $\omega$; 
here $D_l$ is the discrete series representation of $\GL_2(\R)$ with lowest non-negative $K$-type being the character 
$\left(\begin{smallmatrix} \cos{\theta} & -\sin{\theta} \\ 
\sin{\theta}& \cos{\theta} \end{smallmatrix}\right)  \mapsto  e^{-i(l+1) \theta}$, and central character $a \mapsto {\rm sgn}(a)^{l+1}$.
\end{itemize}

This dictionary has the following arithmetic properties: 
\begin{enumerate}
\item ($L$-functions)
For any finite order character $\chi$ of $F^{\times}\backslash \A_F^{\times}$ we have an equality of (completed) $L$-functions:
$$
L(s, \Pi({\bf f}) \otimes \chi) = L(s + (k_0-1)/2, {\bf f}, \chi), 
$$
where the left hand side is the standard $L$-function defined as in Jacquet and Langlands \cite{jacquet-langlands}, and the right hand side is 
defined  via a Dirichlet series as in Shimura \cite{shimura-duke}. 
\item (Algebraicity)
  \begin{enumerate}
  \item if $k_1 \equiv \cdots \equiv k_n \equiv 0 \pmod{2}$ then $\Pi({\bf f})$ is algebraic; 
  \item if $k_1 \equiv \cdots \equiv k_n \equiv 1 \pmod{2}$ then $\Pi({\bf f}) \otimes |\ |^{1/2}$ is algebraic; 
  \item if $k_i \not\equiv k_j  \pmod{2}$ for some $i$ and $j$ then no twist of $\Pi({\bf f})$ is algebraic. 
  \end{enumerate}
Note that (a), (b) and (c) can all be put-together as 
$$k_1 \equiv \cdots \equiv k_n \pmod{2} \iff 
\Pi({\bf f}) \otimes |\ |^{k_0/2} \mbox{is algebraic}. 
$$
\item (Regularity) Suppose now that $k_1 \equiv \cdots \equiv k_n \pmod{2}$. Then $\Pi({\bf f}) \otimes |\ |^{k_0/2}$ is regular exactly when 
each $k_j \geq 2$. 

\item (Galois equivariance)
Let $k_1 \equiv \cdots \equiv k_n \pmod{2}$ with $k_j \geq 2$ for all $j$. Then, for any $\sigma \in {\rm Aut}(\C)$ we have: 
$$
{}^{\sigma}(\Pi({\bf f}) \otimes |\ |^{k_0/2}) = \Pi({\bf f}^{\sigma}) \otimes |\ |^{k_0/2}, 
$$
where the action of $\sigma$ on representations is as in Clozel \cite{clozel} or Waldspurger \cite{waldspurger}, and on Hilbert modular forms 
is as in Shimura \cite{shimura-duke}. 

\item (Rationality field)
Let $\Q({\bf f})$ be the field generated by the Fourier coefficients of ${\bf f}$, and let $\Q(\Pi({\bf f}))$ be the subfield of complex numbers fixed by the set of 
all $\sigma \in {\rm Aut}(\C)$ such that ${}^{\sigma}\Pi({\bf f})_v =  \Pi({\bf f})_v$ for all finite places $v$. Then 
$\Q({\bf f}) = \Q(\Pi({\bf f})).$
\end{enumerate}
\end{thm}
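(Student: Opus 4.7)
The plan is first to construct the bijection $\f \leftrightarrow \Pi(\f)$, then to verify the five arithmetic properties in turn. To set up the bijection, I would lift $\f$ to an adelic function $\phi_\f$ on $\GL_2(F)\backslash\GL_2(\A_F)$ via strong approximation $\GL_2(\A_F) = \GL_2(F)\cdot(\GL_2(F_\infty)^+ \times K_f)$ for a suitable open compact $K_f$ depending on $(\n,\omega)$. When the narrow class number of $F$ exceeds one, this requires assembling $\f$---really a tuple of classical forms indexed by a set of narrow ideal-class representatives---into one global function; this is where I expect a nontrivial amount of bookkeeping. Let $\Pi(\f)$ be the representation generated by $\phi_\f$; it is irreducible and cuspidal because $\f$ is a newform, by strong multiplicity one. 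The archimedean component is forced to be $\otimes_j D_{k_j-1}$ by holomorphicity of weight $k_j$ (holomorphicity identifies the lowest $K_\infty$-type), the conductor matches by Casselman's new-vector theory, and the central character $\omega_\Pi = \tilde\omega$ falls out of the transformation formula for $\f$ under the diagonal torus.

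For (1), the $L$-function identity is a local comparison: at each $\p \nmid \n$, the Hecke eigenvalue $c(\p,\f)$ and the Satake parameters $\alpha_\p,\beta_\p$ of $\Pi_\p$ are related by $c(\p,\f) = \norm(\p)^{(k_0-1)/2}(\alpha_\p + \beta_\p)$, accounting exactly for the shift $(k_0-1)/2$ between Shimura's and Jacquet--Langlands's normalizations; a parallel computation at ramified and archimedean places completes the matching of Euler products (and of the twists by $\chi$). Properties (2) and (3) are archimedean infinitesimal-character calculations: after twisting by $|\ |^{k_0/2}$, the discrete series $D_{k_j-1}$ at the $j$-th infinite place acquires infinitesimal character with eigenvalues $\tfrac{k_0+k_j-1}{2}$ and $\tfrac{k_0-k_j+1}{2}$, which are integers exactly when $k_j \equiv k_0 \pmod{2}$ and distinct exactly when $k_j \geq 2$. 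The uniformity of (2) then follows because whether a twist of $\Pi(\f)$ is algebraic depends only on the differences of these eigenvalues, which are twist-invariant.

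The main obstacle is (4), Galois equivariance. The action of $\sigma$ on $\f$ is on Fourier coefficients (Shimura), while the action on $\Pi$ is on a cohomological model (Clozel/Waldspurger); Clozel's definition only makes sense once the representation is regular algebraic, which is precisely the role of the twist by $|\ |^{k_0/2}$. To prove the two sides agree, I would show that ${}^\sigma(\Pi(\f)\otimes|\ |^{k_0/2})$ and $\Pi(\f^\sigma)\otimes|\ |^{k_0/2}$ share Satake parameters at almost all finite places---on the left because $\sigma$ acts on the eigenvalues of the Hecke algebra on a rational cohomology space containing $\Pi(\f)_f$, on the right because by (1) the Hecke eigenvalues of $\Pi(\f^\sigma)$ are the $\sigma$-transforms of those of $\Pi(\f)$ together with the defining property of $\f^\sigma$---and then invoke strong multiplicity one. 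The delicate point is to verify rationality of the cohomological model of $\Pi(\f)\otimes|\ |^{k_0/2}$ compatibly with Shimura's classical arithmetic structure on spaces of Hilbert modular forms; here the arguments of Harder, Hida, and Garrett serve as the template. Finally, (5) is immediate from (4) combined with (1): $\sigma$ fixes $\Pi(\f)_v$ at all finite $v$ iff it fixes every Hecke eigenvalue iff it fixes every $c(\p,\f)$, so $\Q(\Pi(\f)) = \Q(\f)$.
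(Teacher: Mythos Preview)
Your strategy is essentially the paper's: assemble the $h$-tuple $(f_1,\dots,f_h)$ into one adelic function, take the representation it generates, prove irreducibility by matching Hecke eigenvalues to Satake parameters at almost all $\p$ and invoking strong multiplicity one (the paper pins down $\Pi_{\eta_j}\simeq D_{k_j-1}$ via the Casimir eigenvalue rather than the lowest $K$-type, but this is cosmetic); properties (2), (3), (5) go exactly as you outline, and (4) is proved by comparing Satake parameters and strong multiplicity one.

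Two points where your sketch diverges from the paper's execution. For (1), the paper does not match Euler factors locally but instead computes the global Mellin integral $\int_{\A_F^\times/F^\times}\f\left(\begin{smallmatrix} y & \\ & 1\end{smallmatrix}\right)|y|^{s-1/2}\,d^\times y$, unfolds to the Whittaker side to get $L(s,\Pi(\f))$, and recognizes the classical Fourier expansion on the other side as $L(s+(k_0-1)/2,\f)$; your local route also works, but your stated relation is off---one finds $\alpha_\p+\beta_\p=q_\p^{1/2}\,c(\p,\f)$, not $c(\p,\f)=\norm(\p)^{(k_0-1)/2}(\alpha_\p+\beta_\p)$. More substantively, your anticipated ``delicate point'' in (4)---compatibility of the cohomological rational structure with Shimura's---is not needed. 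The paper only uses Clozel's theorem to know that ${}^\sigma(\Pi(\f)\otimes|\ |^{k_0/2})$ is again cuspidal automorphic, and then applies Waldspurger's explicit formula ${}^\sigma\pi(\chi_1,\chi_2)=\pi({}^{\sigma'}\!\chi_1,{}^{\sigma'}\!\chi_2)$, with ${}^{\sigma'}\!\chi(x)=|x|^{-1/2}\sigma(\chi(x)|x|^{1/2})$, to compute the Satake parameters of the $\sigma$-twist directly and match them against $c(\p,\f^\sigma)$ via Shimura's identity $\norm(\p)^{k_0/2}c(\p,\f^\sigma)=\sigma(\norm(\p)^{k_0/2}c(\p,\f))$. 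No comparison of two rational structures enters.

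One omission: you do not address the inverse map $\Pi\mapsto\f$. The paper builds it in the Whittaker model by taking the lowest-weight vector at each archimedean place and, at each finite $\p$, the normalized new vector translated by $\left(\begin{smallmatrix}\varpi_\p^{-r}&\\&1\end{smallmatrix}\right)$ (to account for the conductor of $\psi_\p$); the global Whittaker function is summed over $F^\times$ to produce a cusp form, which one then checks lies in $\mathcal{A}_0(k,\n,\tilde\omega)\cong\S_k(\n,\tilde\omega)$ and is primitive with $c(\ringO,\f)=1$. This is where the narrow-class-number bookkeeping you anticipate actually sits.
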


We start by setting up our notations and conventions in Section \ref{sec:prelims}.  In \ref{sec:cohomological} we discuss arithmetic issues of regular algebraic cuspidal automorphic representations where we borrow heavily from Clozel \cite{clozel}. In \ref{sec:periods}, 
we provide a summary of the definition of certain periods and certain relations amongst these periods as in Raghuram-Shahidi \cite{raghuram-shahidi-imrn}. The rest of 
Section~\ref{sec:l-value} is devoted to the proofs of Theorem~\ref{thm:central} and Corollary~\ref{cor:all}. We start Section \ref{sec:hilbert} 
by reviewing the basics of 
holomorphic Hilbert modular forms borrowing from Shimura's article \cite{shimura-duke}; the rest of that section gives a detailed proof of the above dictionary 
and its arithmetic properties. 

\medskip

{\small
\noindent{\it Acknowledgements:} This paper had its origins in a Viennese cafe in February 2007 where Ralf Schmidt and the first author would meet for lunch and generally talk about modular forms, automorphic representations, and new vectors. It became clear then that something was lacking in the literature on Hilbert modular forms. This article is an attempt to fill that gap. Both the authors thank Paul Garrett and Ralf Schmidt for helpful discussions concerning this project. The first author is grateful to an NSF grant (DMS-0856113) which partially supports his research.}

\section{Notations and preliminaries}
\label{sec:prelims}

\subsection{The base field}
Let $F$ denote a totally real number field of degree $n$, $\ringO=\ringO_F$ the ring of integers in $F$. The set of real embeddings of $F$ is denoted 
${\rm Hom}_{\rm field}(F, \C) = {\rm Hom}_{\rm field}(F, \R) = \{\eta_1,\dots,\eta_n\}$. We will also let $S_{\infty}$ denote this set of all real places. 
We write $F_+$ for the set of all the totally positive elements in $F$. (A totally positive element means an element $\alpha$ in $F$ such that $\eta_j(\alpha) > 0$ for all $j=1, \dots, n$.) Let $F_{\infty} = \prod_{v \in S_{\infty}} F_v = \prod_{j=1}^n F_{\eta_j} \simeq \prod_{j=1}^n \R$, and 
$F_{\infty^+}$ the subset of all $(x_1, \dots, x_n)$ in $F_{\infty}$ such that $x_j > 0$ for all $j$. Let $\A_F$ denote the ad\`ele ring of $F$, and 
$\A_{F,f}$ the finite ad\`eles; we will drop the subscript $F$ for the field $\Q$. Hence $\A_F = \A \otimes_{\Q} F$, etc. 

Let $\p$ denote a prime ideal of $\ringO$, $F_{\p}$ the completion of $F$ at $\p$, and $\ringO_{\p}$ the ring of integers of $F_{\p}$.  The unique maximal ideal of $\ringO_{\p}$ is $\p\ringO_{\p}$ and is generated by a uniformizer $\varpi_{\p}$. Let $\mathfrak{D}_F$ denote the absolute different of $F$, i.e., 
$\mathfrak{D}_F^{-1} = \{x \in F : T_{F/\Q}(x \ringO) \subset \Z\}$. Let $\mathfrak{D}_F = \prod_{\p} \p^{r_{\p}}.$ Let $\mathfrak{d}_F$ denote the 
absolute discriminant of $F$. Let ${\sf d}_F \in \ringO_F$ be such that ${\rm ord }_{\p}({\sf d}_F) = r_{\p} = {\rm ord}(\mathfrak{D}_F)$; this is possible by strong approximation.

\subsection{The narrow class group of $F$}
\label{class_gp}
 By the narrow class group, we mean the group  $F^\times\backslash\A_F^\times/F^\times_{\infty^+}\prod \ringO_\p^\times$, and the cardinality of this group, which is denoted as $h=h_F$, is called the narrow class number. The narrow class group can be also viewed as the group $J_F/P_{F_+}$ of all fractional ideals of $F$ modulo principal ideals generated by totally positive elements in $F$. The narrow class group is, in general, bigger than the class group $J_F/P_F$, and one has the following exact sequence. (See, for instance, \cite[Section VI. 1]{neukirch}.)

\[ 1 \To \ringO^\times/\ringO^\times_+ \To F_\infty^\times/F_{\infty^+}^\times \To J_F/P_{F_+} \To J_F/P_F \To 1.\]

\subsection{The groups $G \supset B \supset T \supset Z$}
Let $G = {\rm Res}_{F/\Q}(\GL_2)$ which is the Weil restriction of scalars from $F$ to $\Q$ of the algebraic group $\GL_2$ over $F$. 
Hence $G(\Q) = \GL_2(F)$ and more generally, for any $\Q$-algebra $A$ we have $G(A) = \GL_2(A \otimes_{\Q} F)$. For any finite prime $p$, 
$G(\Q_p) = \prod_{\p | p} \GL_2(F_{\p})$; similarly, $G_{\infty} := G(\R) = \prod_{j=1}^n \GL_2(F_{\eta_j}) = \prod_{j=1}^n \GL_2(\R).$ Let 
$\g_{\infty}$ be the complexified Lie algebra of $G_{\infty}$.

Fix the standard Borel subgroup $B = {\rm Res}_{F/\Q}(B_2)$, with $B_2$ being the standard Borel subgroup of $\GL_2$ of all upper triangular matrices. 
Let $T = {\rm Res}_{F/\Q}(T_2)$, where $T_2$ stands for the diagonal torus in $\GL_2$. Let $Z = {\rm Res}_{F/\Q}(Z_2)$, where 
$Z_2$ is the center of $\GL_2$ consisting of scalar matrices. For any $\Q$-algebra $A$, we can talk of $B(A)$, $T(A)$ and $Z(A)$ as we did for $G$.

\subsection{Maximal `compact' subgroup}
Let $K_{\infty}$ stand for the maximal compact subgroup of $G_{\infty}$ thickened by its center. Hence 
$$
K_{\infty} = \prod_{j=1}^n ({\rm O}_2(\R) Z_2(\R))
$$
where ${\rm O}_2(\R)$ is the usual maximal compact subgroup of $\GL_2(\R)$. 
For any Lie group $\mathcal{G}$ we will denote $\mathcal{G}^0$ the connected component of the identity, and $\pi_0(\mathcal{G}) := 
\mathcal{G}/\mathcal{G}^0$ denotes the group of connected components. 
Observe then that 
$$
K_{\infty}^0 = \prod_{j=1}^n ({\rm SO}_2(\R) Z_2(\R)^0)
$$
and that $\pi_0(G_{\infty}) = \pi_0(K_{\infty}) = K_{\infty} /K_{\infty}^0 \simeq (\Z/2\Z)^n$. We will identify the dual group $(K_{\infty} /K_{\infty}^0)^{\widehat{}}$ with 
$(\Z/2\Z)^n = \{\pm\}^n$, with the $+$ (resp., $-$) denoting the trivial (resp., nontrivial) character of ${\rm O}_2(\R)/{\rm SO}_2(\R)$.

Let $\k_{\infty}$ be the complefixed Lie algebra of $K_{\infty}$ or $K_{\infty}^0$; we will use similar `standard' notation for the complexified Lie algebras of other Lie groups.

\subsection{Finite-dimensional representations}
Any $t \in T_{\infty}$ looks like 
$t = (t_j)_j \in \prod_{j=1}^n T_2(F_{\eta_j}) = \prod_{j=1}^n T_2(\R)$. We will also write $t \in T_{\infty}$ as: 
$$
t = \left(\begin{pmatrix}x_1& 0 \\ 0 & y_1 \end{pmatrix}, \begin{pmatrix}x_2& 0 \\ 0 & y_2 \end{pmatrix}, \dots,\begin{pmatrix}x_n & 0 \\ 0 & y_n \end{pmatrix}\right). 
$$
Let $\mu = (\mu_1,\dots,\mu_n)$ be an integral weight for $T_{\infty}$, i.e., each $\mu_j = (a_j,b_j) \in \Z^2$ and we have 
$$
\mu(t) = \prod_j \mu_j(t_j) = \prod_j x_j^{a_j} y_j^{b_j}.
$$
Let $X(T_{\infty})$ stand for set of all integral weights. Let $X^+(T_{\infty})$ be the subset of dominant integral weights; dominant for the choice of Borel subgroup being $B$. A weight $\mu \in X(T_{\infty})$ as above is dominant if and only if $a_j \geq b_j$ for all $1 \leq j \leq n$.

For $\mu \in X^+(T_{\infty})$, we let $E_{\mu}$ stand for the irreducible finite-dimensional representation of $G(\C)$ of highest weight $\mu$. 
Since $G(\C) = \prod_{j=1}^n \GL_2(\C)$, it is clear that $E_{\mu} = \otimes_j E_{\mu_j}$ with $E_{\mu_j}$ being the irreducible finite-dimensional 
representation of $\GL_2(\C)$ of highest weight $\mu_j$. 
Since $\mu_j = (a_j,b_j)$ it is well-known that 
$$
E_{\mu_j} = {\rm Sym}^{a_j-b_j}(\C^2) \otimes {\rm det}^{b_j}
$$
where $\C^2$ is the standard representation of $\GL_2(\C)$.
We let $E_{\mu}^{\sf v}$ stand for the contragredient representation; $E_{\mu}^{\sf v} = E_{\mu^{\sf v}}$ where 
$\mu^{\sf v} = (\mu_1^{\sf v},\dots,\mu_n^{\sf v})$ with $\mu_j^{\sf v} = (-b_j,-a_j)$. Hence, 
$$
E_{\mu_j}^{\sf v} = {\rm Sym}^{a_j-b_j}(\C^2) \otimes {\rm det}^{-a_j}.
$$

\subsection{Automorphic representations}
Following Borel--Jacquet \cite[Section 4.6]{borel-jacquet}, we say an irreducible 
representation of $G({\mathbb A}_F)$ is automorphic if it is isomorphic to an 
irreducible subquotient of the representation of $G({\mathbb A}_F)$ on its
space of automorphic forms. We say an automorphic representation is cuspidal 
if it is a subrepresentation of the representation of $G({\mathbb A}_F)$ on 
the space of cusp forms $\mathcal{A}_{\rm cusp}(G(F)\backslash G({\mathbb A}_F))$; let 
$V_{\Pi}$ denote the representation space of $\Pi$. 
(In particular, a cuspidal representation need not be unitary.)
For an automorphic representation $\Pi$ of $G({\mathbb A}_F)$, we have
$\Pi = \Pi_{\infty} \otimes \Pi_f$, where $\Pi_{\infty} =
\otimes_{v \in S_{\infty}} \Pi_v$ is an irreducible representation of $G_{\infty}$,
and $\Pi_f = \otimes_{v \notin S_{\infty}} \Pi_v$, which is a restricted tensor product, 
is an irreducible representation of $G({\mathbb A}_f)$.

\subsection{Measures and absolute values}
The normalized absolute value for any local field $L$ is denoted $|\ |$, and occasionally we might write 
$|\ |_L$. The product of all the local absolute values gives the ad\`elic norm $|\ |$ on $\A_F^{\times}$. All the measures used will be Haar measures, or measures on quotient spaces derived from Haar measures. We will simply denote the underlying measure by $dx$ or $dg$; the measures are normalized in the usual or `obvious' way. For example, locally $\ringO_{\p}^{\times}$ has volume $1$, and similarly, so does $\GL_2(\ringO_{\p}).$ The global measures 
on $\A_F^{\times}$ and $\GL_2(\A_F)$ are the product measures of local measures, etc.

\subsection{Additive character $\psi$ and Gauss sums}
\label{sec:additive-character}
We fix, once and for all, an additive character $\psi_{\Q}$ of ${\mathbb Q} \backslash {\mathbb A}$, as in Tate's thesis, namely, 
$\psi_{\Q}(x) = e^{2\pi i \lambda(x)}$ with the $\lambda$ as defined in \cite[Section 2.2]{tate}. In particular, 
$\lambda = \sum_{p \leq \infty} \lambda_p$; $\lambda_{\infty}(t) = -t$ for any $t \in \R$; $\lambda_p(x)$ for any $x \in \Q_p$ 
is that rational number with only $p$-power denominator such that $x - \lambda_p(x) \in \Z_p$. If we write 
$\psi_{\Q} =  \psi_{\R} \otimes \otimes_p \psi_{\Q_p}$, 
then $\psi_{\R}(t) = e^{-2\pi it}$ and $\psi_{\Q_p}$ is trivial on $\Z_p$ and nontrivial on $p^{-1}\Z_p$.

Next, we define a character $\psi$ of $F\backslash \A_F$ by composing $\psi_{\Q}$ with the 
trace map from $F$ to $\Q$: $\psi = \psi_{\Q} \circ T_{F/\Q}$. If $\psi = \otimes_v \psi_v$, then the local characters are determined analogously. 
In particular, for all prime ideals $\p$,  suppose $\p^{r_{\p}}$ is the highest power of $\p$ dividing the different $\mathfrak{D}_F$,  
then the conductor of the local character $\psi_{\p}$ is $\p^{-r_{\p}}$, i.e., $\psi_{\p}$ is trivial on $\p^{-r_{\p}}$ and nontrivial on $\p^{-r_{\p}-1}.$

For a Hecke character $\xi$ of $F$, by which we mean a continuous homomorphism 
$\xi: F^{\times}\backslash {\mathbb A}_F^{\times} \to {\mathbb C}^{\times}$, 
following Weil \cite[Chapter VII, Section 7]{weil}, we define the Gauss sum of $\xi$ as follows: 
We let $\mathfrak{c}$ stand for the conductor ideal of $\xi_f$. Let 
$y = (y_{\p})_{\p} \in {\mathbb A}_{F,f}^{\times}$ be such that 
${\rm ord}_{\p}(y_{\p}) = -{\rm ord}_{\p}(\mathfrak{c}) - r_{\p}$.  
The Gauss sum of $\xi$ is defined as  $\mathcal{G}(\xi_f,\psi_f,y) = \prod_{\p} \mathcal{G}(\xi_{\p},\psi_{\p},y_{\p})$
where the local Gauss sum $\mathcal{G}(\xi_{\p},\psi_{\p},y_{\p})$ is defined as
$$
\mathcal{G}(\xi_{\p},\psi_{\p},y_{\p}) = \int_{\mathcal{O}_{\p}^{\times}} \xi_{\p}(u_{\p})^{-1}\psi_{\p}(y_{\p}u_{\p})\, du_{\p}.
$$
For almost all $\p$, where everything in sight is unramified, we have $\mathcal{G}(\xi_{\p},\psi_{\p},y_{\p}) =1$, and 
for all $\p$ we have $\mathcal{G}(\xi_{\p},\psi_{\p},y_{\p})  \neq 0$. 
Note that, unlike Weil, we do not normalize the Gauss sum to make it have absolute value one and we do not have any factor at infinity. Suppressing the dependence on $\psi$ and $y$, we denote $\mathcal{G}(\xi_f,\psi_f,y)$ simply 
by $\mathcal{G}(\xi_f)$ or even $\mathcal{G}(\xi)$.

\subsection{Whittaker models}
We will often be working with Whittaker models, and without any ado we will freely use these standard results. 
(See, for example, Bump~\cite[Chapters 3,4]{bump}.)

\begin{thm}[Local Whittaker Models]\label{local_whittaker}
For any place $v$ of $F$, let $\Pi_v$ be an irreducible admissible infinite-dimensional representation of $\GL_2(F_v)$. Then there exists a unique space 
$\Whit(\Pi_v, \psi_v)$ of smooth functions invariant under right translations by elements of  $\GL_2(F_v)$ such that for any function $W\in \Whit(\Pi_v, \psi_v)$
\[ W\left( \left( \begin{array}{ll} 1 & x \\ & 1 \end{array} \right)g \right)=\psi_v(x)W(g), \hskip 0.2in \text{for} \,\, x\in F_v \, \text{and} \,\, g\in\GL_2(F_v), \]
and the representation of $\GL_2(F_v)$ on the space $\Whit(\Pi_v, \psi_v)$  
is equivalent to the representation $\Pi_v$. This space $\Whit(\Pi_v, \psi_v)$ is called a (local) Whittaker model for $\Pi_v$. 
\end{thm}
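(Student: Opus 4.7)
The plan is to establish existence and uniqueness separately, following the classical arguments of Shalika, Jacquet--Langlands, and Gelfand--Kazhdan (all spelled out in Bump \cite{bump}, which the authors already cite). The heart of the matter is uniqueness: the space of Whittaker functionals $\Hom_{N(F_v)}(\Pi_v,\psi_v)$ is at most one-dimensional; existence is then checked case-by-case against the classification of irreducible admissible infinite-dimensional representations of $\GL_2(F_v)$.

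First I would reformulate uniqueness in terms of the twisted Jacquet module
$$
J_{N,\psi_v}(\Pi_v) \ := \ V_{\Pi_v}\,\Big/\,\bigl\langle \Pi_v(n(x))v - \psi_v(x)v\ :\ x\in F_v,\ v\in V_{\Pi_v}\bigr\rangle,
$$
where $n(x) = \left(\begin{smallmatrix}1 & x \\ 0 & 1\end{smallmatrix}\right)$, so that $\Hom_{N(F_v)}(\Pi_v,\psi_v) \simeq J_{N,\psi_v}(\Pi_v)^{*}$ and the task reduces to showing $\dim J_{N,\psi_v}(\Pi_v) \leq 1$. For non-archimedean $v$ I would deploy the Gelfand--Kazhdan argument: every $(N,\psi_v)\times(N,\psi_v)$-equivariant distribution on $\GL_2(F_v)$ is shown to be invariant under the transpose-inverse anti-involution $g \mapsto {}^tg^{-1}$, and the Bruhat decomposition $\GL_2 = B_2 \sqcup B_2 w N$ then forces the one-dimensionality. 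A more hands-on alternative is to analyze the $T_2(F_v)$-action on $J_{N,\psi_v}(\Pi_v)$ via the Kirillov model, using infinite-dimensionality of $\Pi_v$ to rule out the trivial $T_2$-character.

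For existence at a non-archimedean place, one proceeds through the classification: for a principal series $\Ind_{B_2}^{\GL_2}(\chi_1\otimes\chi_2)$ the Jacquet integral $W_f(g) = \int_{F_v} f(w n(x) g)\psi_v(-x)\,dx$ (continued analytically at points of reducibility) produces a nonzero Whittaker functional; Steinberg and its twists are treated as subquotients of reducible principal series; and for supercuspidals the compactness of matrix coefficients modulo center makes $W(g) = \int \langle \Pi_v(n(x)g)v,\tilde{v}\rangle \psi_v(-x)\,dx$ absolutely convergent and nonzero for a suitable choice of $v$. For archimedean $v$ the analogous assertions hold at the level of the underlying Harish-Chandra module of $K_\infty$-finite vectors: uniqueness is Shalika's theorem and existence reduces via the Langlands subquotient theorem to the principal series case, where the archimedean Jacquet integral again furnishes a functional. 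The main obstacle is the archimedean uniqueness; one cannot transport the purely algebraic non-archimedean argument verbatim because the smooth and algebraic categories no longer coincide, and one must appeal to the Casselman--Wallach theory of moderate-growth globalizations to pass from Harish-Chandra modules back to the space of smooth functions on $\GL_2(F_v)$ demanded by the statement.
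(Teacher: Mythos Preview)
Your sketch is a correct outline of the classical proof, following precisely the route in Bump \cite[Chapters 3,4]{bump} and Jacquet--Langlands. However, the paper does not prove this theorem at all: it is stated as a standard result in the preliminaries (\S2.8) with only the parenthetical remark ``See, for example, Bump~\cite[Chapters 3,4]{bump}'' and is then used freely throughout. So there is nothing to compare; your proposal supplies an argument where the paper deliberately supplies none.
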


\begin{thm}[Global Whittaker Models]\label{whittaker}
Let $\A:=\A_F$ be the ad\`ele ring of a number field $F$, and $(\Pi, V_\Pi)$ a cuspidal automorphic representation of $\GL_2(\A)$. Then there exists a unique Whittaker model $\Whit(\Pi, \psi)$ for $\Pi$ with respect to a non-trivial additive character $\psi$, 
that consists of finite linear combinations of functions given by
\[ W_\phi(g):=\int_{\A/ F} \phi\left(\left(\begin{array}{ll} 1 & x\\  & 1 \end{array}\right)g\right)\overline{\psi(x)}\,dx,\]
where $\phi\in V_\Pi$ and $g\in \GL_2(\A)$. This space decomposes as a restricted tensor product of local Whittaker models.
\end{thm}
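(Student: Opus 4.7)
The plan is to prove existence via the explicit integral and the Fourier--Whittaker expansion of cusp forms, and then to deduce uniqueness and the tensor factorization from the local uniqueness recorded in Theorem~\ref{local_whittaker}. First, with $n(y)=\left(\begin{smallmatrix}1 & y \\ 0 & 1\end{smallmatrix}\right)$, the substitution $x\mapsto x-y$ in the defining integral, combined with translation invariance of Haar measure on $\A/F$, immediately yields $W_\phi(n(y)g)=\psi(y)W_\phi(g)$ for $y\in\A$, and right-translation equivariance $W_{\rho(h)\phi}=\rho(h)W_\phi$ is clear from moving $h$ past the integral. Thus $\phi\mapsto W_\phi$ is a $\GL_2(\A)$-intertwiner from $V_\Pi$ into the space of smooth functions on $\GL_2(\A)$ with the Whittaker transformation property.

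Next, for non-triviality, I would expand $x\mapsto\phi(n(x)g)$, viewed as a function on the compact abelian group $\A/F$, in its Fourier series. The Pontryagin dual of $\A/F$ is identified with $F$ via $\gamma\mapsto (x\mapsto\psi(\gamma x))$. Cuspidality kills the $\gamma=0$ coefficient, and for $\gamma\in F^{\times}$ the left $\GL_2(F)$-invariance of $\phi$ applied to $\left(\begin{smallmatrix}\gamma & 0 \\ 0 & 1\end{smallmatrix}\right)\in\GL_2(F)$, combined with the change of variables $u=\gamma x$ on $\A/F$, identifies the $\gamma$-coefficient with $W_\phi\!\left(\left(\begin{smallmatrix}\gamma & 0 \\ 0 & 1\end{smallmatrix}\right)g\right)$. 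This produces the Fourier--Whittaker expansion
$$
\phi(g)=\sum_{\gamma\in F^{\times}}W_\phi\!\left(\begin{pmatrix}\gamma & 0 \\ 0 & 1\end{pmatrix}g\right),
$$
so $W_\phi=0$ forces $\phi=0$. By irreducibility of $\Pi$, the map $\phi\mapsto W_\phi$ then embeds $V_\Pi$ as a global Whittaker model, establishing existence.

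For uniqueness and the tensor factorization, I would identify each realization of $\Pi$ as a Whittaker model with a nonzero continuous $\psi$-equivariant linear functional $\Lambda:V_\Pi\to\C$, namely evaluation at the identity. Under the restricted tensor product $V_\Pi\cong\bigotimes'_v V_{\Pi_v}$, such a $\Lambda$ decomposes as $\Lambda=\bigotimes_v\Lambda_v$ with each $\Lambda_v$ a local $\psi_v$-equivariant functional on $V_{\Pi_v}$, and Theorem~\ref{local_whittaker} guarantees that each $\Lambda_v$ is unique up to scalar. Hence the space of global Whittaker functionals is one-dimensional, which gives simultaneously the uniqueness of $\Whit(\Pi,\psi)$ and the decomposition $\Whit(\Pi,\psi)\cong\bigotimes'_v\Whit(\Pi_v,\psi_v)$ upon transporting each local functional back to the local Whittaker realization.

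The main obstacle will be justifying the Fourier--Whittaker expansion rigorously: one must verify convergence of the resulting series and its termwise compatibility with the $\GL_2(\A)$-action, which rests on the moderate growth of automorphic forms and the rapid decay of cusp forms on Siegel sets. These are standard but non-trivial analytic inputs, and all the rest of the argument is essentially formal given them.
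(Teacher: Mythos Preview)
The paper does not give a proof of this theorem at all: it is stated in the preliminary section on Whittaker models under the heading ``we will freely use these standard results'' with a citation to Bump~\cite[Chapters~3,4]{bump}. So there is nothing to compare your argument against in the paper itself.

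That said, your sketch is the standard proof and is essentially correct. The existence portion via the Fourier--Whittaker expansion is exactly how it is done, and the injectivity-hence-embedding argument using irreducibility is the right mechanism. For the uniqueness and factorization, your reduction to local uniqueness via Whittaker functionals is also the standard route; the one place where a reader might want more is the assertion that a global $\psi$-Whittaker functional on a restricted tensor product $\bigotimes'_v V_{\Pi_v}$ necessarily factors as $\bigotimes_v \Lambda_v$. This is not purely formal: one argues that for almost all $v$ the functional is determined on the spherical line, and then uses the one-dimensionality of each local Whittaker space (Theorem~\ref{local_whittaker}) to peel off factors one place at a time. Once each local piece is one-dimensional the global space is forced to be one-dimensional too. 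If you want to tighten the write-up, that is the step to spell out; everything else is fine as stated.
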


\section{Central critical value}
\label{sec:l-value}

The purpose of this section is to give a self-contained proof of Theorem~\ref{thm:central} which is based on a cohomological interpretation of the classical Mellin transform.  
In \ref{sec:cohomological} we define regular algebraic cuspidal automorphic representations where we borrow heavily from Clozel \cite{clozel}, and also record some well-known arithmetic properties of such representations. In \ref{sec:periods} we provide a summary of the definition of periods attached to such a representation $\Pi$; these periods arise via a comparison of a rational structure on a Whittaker model of $\Pi$ with a rational structure on a cohomological realization of $\Pi$. We also record certain relations amongst these periods as in Raghuram-Shahidi \cite{raghuram-shahidi-imrn}. The rest of 
Section~\ref{sec:l-value} is devoted to the proofs of Theorem~\ref{thm:central} and Corollary~\ref{cor:all}. 

\subsection{Cohomological automorphic representations}
\label{sec:cohomological}

\subsubsection{Cuspidal cohomology} 
\label{sec:cuspidal-cohomology}
For any open-compact subgroup $K_f \subset G(\A_f)$ define the space
$$
S^G_{K_f} := G(\Q)\backslash G(\A)/ K_{\infty}^0K_f = \GL_2(F)\backslash \GL_2(\A_F)/ K_{\infty}^0K_f. 
$$
This is an example of a locally symmetric space, because such a space is a finite disjoint union of its connected components which are all of the form 
$\Gamma \backslash G(\R)^0/K_{\infty}^0$ for an arithmetic subgroup $\Gamma$ of $G(\R)^0$; locally it looks like the symmetric space 
$G(\R)^0/K_{\infty}^0$. In the literature on Hilbert modular forms, these spaces also go by the appellation Hilbert--Blumenthal varieties. (See, for example, 
Ghate~\cite[Section 2.2]{ghate}.)

Let $\mu = (\mu_1,\dots,\mu_n) \in X^+(T)$. The representation $E_{\mu}^{\sf v}$ defines a local system $\E_{\mu}^{\sf v}$ on $S^G_{K_f}$. 
(Working with the dual $E_{\mu}^{\sf v}$ instead of just $E_{\mu}$ is for convenience which will become clear later on.) We are interested in the 
sheaf cohomology groups 
$$
H^{\bullet}(S^G_{K_f} , \E_{\mu}^{\sf v}).
$$
It is convenient to pass to the limit over all open-compact subgroups $K_f$ and let 
$$
H^{\bullet}(S^G, \E_{\mu}^{\sf v}) := \varinjlim_{K_f} H^{\bullet}(S^G_{K_f} , \E_{\mu}^{\sf v}).  
$$
There is an action of $\pi_0(G_{\infty}) \times G(\A_f)$ on $H^{\bullet}(S^G, \E_{\mu}^{\sf v})$, which is usually called a Hecke-action, and 
one can always recover the cohomology of $S^G_{K_f}$ by taking invariants: 
$$
H^{\bullet}(S^G_{K_f} , \E_{\mu}^{\sf v}) = H^{\bullet}(S^G, \E_{\mu}^{\sf v})^{K_f}.
$$

We can compute the above sheaf cohomology via the de~Rham complex, and then reinterpreting the de~Rham complex in terms of the complex computing relative Lie algebra cohomology, we get the isomorphism: 
$$
H^{\bullet}(S^G, \E_{\mu}^{\sf v})  \simeq H^{\bullet}(\g_{\infty}, K_{\infty}^0; \  C^{\infty}(G(\Q)\backslash G(\A)) \otimes E_{\mu}^{\sf v}) .
$$
With level structure $K_f$ this takes the form: 
$$
H^{\bullet}(S^G_{K_f}, \E_{\mu}^{\sf v})  \simeq H^{\bullet}(\g_{\infty}, K_{\infty}^0; \  C^{\infty}(G(\Q)\backslash G(\A))^{K_f} \otimes E_{\mu}^{\sf v}) .
$$
The inclusion $C^{\infty}_{\rm cusp} (G(\Q)\backslash G(\A)) \hookrightarrow C^{\infty}(G(\Q)\backslash G(\A))$ of the space of smooth cusp forms  in the space of all smooth functions induces, via results of Borel \cite{borel-duke},  an injection in cohomology; this defines cuspidal cohomology: 
$$
\xymatrix{
H^{\bullet}(S^G, \E_{\mu}^{\sf v}) 
\ar[rr] & &
H^{\bullet}(\g_{\infty},K_{\infty}^0; C^{\infty}(G(\Q)\backslash G(\A)) \otimes E_{\mu}^{\sf v})  \\
H^{\bullet}_{\rm cusp}(S^G, \E_{\mu}^{\sf v}) \ar@{^{(}->}[u]
\ar[rr] 
& & 
H^{\bullet}(\g_{\infty},K_{\infty}^0; C^{\infty}_{\rm cusp}(G(\Q)\backslash G(\A)) \otimes E_{\mu}^{\sf v}) \ar@{^{(}->}[u]
}$$
Using the usual decomposition of the space of cusp forms into a direct sum of cuspidal automorphic representations, we get the following fundamental decomposition of $\pi_0(G_{\infty}) \times G(\A_f)$-modules: 
\begin{equation}
H^{\bullet}_{\rm cusp}(S^G, \E_{\mu}^{\sf v}) = \bigoplus_{\Pi} H^{\bullet}(\g_{\infty},K_{\infty}^0;  \Pi_{\infty} \otimes  E_{\mu}^{\sf v}) \otimes \Pi_f
\end{equation}

We say that {\it $\Pi$ contributes to the cuspidal cohomology of $G$ with coefficients in $E_{\mu}^{\sf v}$} if $\Pi$ has a nonzero contribution to the above decomposition. Equivalently, if $\Pi$ is a cuspidal automorphic representation whose representation at infinity $\Pi_{\infty}$ after twisting by $E_{\mu}^{\sf v}$
has nontrivial relative Lie algebra cohomology. In this situation, we write $\Pi \in {\rm Coh}(G, \mu^{\sf v})$. 

Whether $\Pi$ contributes to cuspidal cohomology or not is determined entirely by its infinite component $\Pi_{\infty}$. This is a very well-known and somewhat surprising fact; surprising because local representations at infinity seem to have a such a strong control over a global phenomenon. 
Further, it was observed by Clozel that this property is in fact captured purely in terms of certain exponents of characters of $\C^*$ appearing in the Langlands parameter of $\Pi_{\infty}$. We now proceed to describe these exponents, for which we need some preliminaries about the local Langlands correspondence; we refer the reader to 
Knapp \cite{knapp}.

\subsubsection{The Weil group of $\R$}
Let $W_{\mathbb R}$ be the Weil group of ${\mathbb R}$. Recall 
that as a set it is 
defined as $W_{\mathbb R} = {\mathbb C}^* \cup j{\mathbb C}^*$. The group 
structure is induced from that of ${\mathbb C}^*$ and the relations 
$jzj^{-1} = \overline{z}$ and $j^2= -1 $. There is a homomorphism 
$W_{\mathbb R} \to {\mathbb R}^*$ which sends $z \in {\mathbb C}^*$ to 
$|z|_{\C} = z\bar{z}$ and sends $j$ to $-1$. This homomorphism induces an isomorphism 
of the abelianization $W_{\mathbb R}^{\rm ab} \to {\mathbb R}^*$.

Let us recall the classification of two-dimensional semi-simple representations of $W_{\R}$. To begin, 
any (quasi-)character $\xi$ of $\C^*$ looks like $\xi_{(s,w)} : \C^* \to \C^*$ with 
$$
\xi_{(s,w)}(z) = z^s \bar{z}^w, \ \ \ {\rm or} \ \ \ \xi_{(s,w)}(r e^{i \theta}) = r^{s+w} e^{i (s-w)\theta},  
$$
where $s, w \in \C$ and $s - w \in \Z$. As alluded to above, the complex absolute value is $|z|_{\C} := z\bar{z} = \xi_{(1,1)}(z)$. 
A character $\xi_{(s,w)}$ is unitary, i.e., takes values in $\C^1 = \{z \in \C^* : |z|_{\C} = 1\}$, if and only if $w = -s$ in which case $s \in \frac12 \Z$. 
In other words, any unitary character of $\C^*$ is of the form $\xi_l$ for $l \in \Z$, where 
$$
\xi_l(z) = \left(\frac{z}{\bar{z}}\right)^{l/2} = \left(\frac{z}{\sqrt{|z|_{\C}}}\right)^l, \ \ \ {\rm or} \ \ \  \xi_l(re^{i \theta}) = e^{i l \theta}.
$$
Next, any character $\chi$ of $\R^*$ looks 
like $\chi_{(s,\epsilon)} : \R^* \to \C^*$ with 
$$
\chi_{(s,\epsilon)}(t) = |t|^s {\rm sgn}(t)^{\epsilon}
$$
where $s \in \C$ and $\epsilon$ is in $\{0,1\}$. 
Via the isomorphism $W_{\R}^{\rm ab} \to \R^*$ any character $\theta$ of $W_{\R}$ also looks like $\theta_{(s,\epsilon)} : W_{\R} \to \C^*$ with 
$$
\theta_{(s,\epsilon)}(z) = (z \bar{z})^s \ \ \ {\rm and} \ \ \ \theta_{(s,\epsilon)}(j) = (-1)^{\epsilon}.
$$ 
Henceforth, we identify the character $\theta_{(s,\epsilon)}$ of $W_{\R}$ with the character $\chi_{(s,\epsilon)}$ of $\R^*$.  
Let $\epsilon_{\R} : W_{\mathbb R} \to \{\pm 1\}$ denote the sign 
homomorphism, defined as $\epsilon_{\R}(z) = 1$ and $\epsilon_{\R}(j)=-1$, i.e., $\epsilon_{\R} = \chi_{(0,1)}$. The usual absolute value of 
a real number $t$ is denoted $|t|$ and this gives an absolute value $|\ |_{\R}$ on $W_{\R}$, defined as $\chi_{(1,0)}$. The restriction of $|\ |_{\R}$ to $\C^*$ via $\C^* \hookrightarrow W_{\R}$ gives $|\ |_{\C}$ on $\C^*$.  
Since $W_{\R}$ contains  $\C^*$ as an abelian subgroup of index two, it is an easy exercise to see that any two-dimensional semi-simple representation 
$\tau$ is one of these two-kinds:
\begin{enumerate}
\item an irreducible 2-dimensional representation; $\tau = \tau(l,t)$ parametrized by pairs $(l, t)$ with $l \geq 1$ an integer and $t \in \C$ where 
$$
\tau(l,t) = {\rm Ind}_{\C^*}^{W_\R}(\xi_l) \otimes |\ |_{\R}^t = {\rm Ind}_{\C^*}^{W_\R}(\xi_l  \otimes |\ |_{\C}^t).
$$

\item a reducible 2-dimensional semi-simple representation; $\tau = \tau(\chi_1, \chi_2)$ with characters $\chi_i = \chi_{(s_i,\epsilon_i)}$ of 
$W_{\R}$, where 
$$
\tau(\chi_1, \chi_2) = \chi_1 \oplus \chi_2.
$$
\end{enumerate}

\subsubsection{Irreducible admissible representations of $\GL_2(\R)$} 
Let us recall the Langlands classification for $\GL_2(\R)$. 
Let $\chi_1, \chi_2$ be characters of $\R^*$ such that $\chi_i = \chi_{(s_i,\epsilon_i)}$. 
Let $I(\chi_1,\chi_2)$ be the normalized parabolic induction of the character $\chi_1 \otimes \chi_2$ of the standard Borel subgroup to all of $\GL_2(\R)$. 
Suppose that $\Re(s_1) \geq \Re(s_2)$ then $I(\chi_1,\chi_2)$ has a unique irreducible quotient, called the Langlands quotient, which we denote 
as $J(\chi_1,\chi_2)$. The induced representation $I(\chi_1,\chi_2)$ is reducible if and only if $s_1 - s_2 = l \in \Z_{\geq 1}$; in this case 
the Langlands quotient is, up to a twist, the irreducible finite-dimensional sub-quotient of dimension $l$, and the other piece is a twist of the 
discrete series representation $D_l$ which we now define. (Later we will give this exact sequence precisely.) 
For any integer $l \geq 1$, let $D_l$ stand for the discrete series representation with lowest non-negative $K$-type being the character 
$\left(\begin{smallmatrix} \cos{\theta} & -\sin{\theta} \\ 
\sin{\theta}& \cos{\theta} \end{smallmatrix}\right)  \mapsto  e^{ - i (l+1) \theta}$, and central character $a \mapsto {\rm sgn}(a)^{l+1}$. 
Note the shift from $l$ to $l+1$. The representation at infinity for a holomorphic  elliptic modular cusp form of weight $k$ is $D_{k-1}$.
The Langlands classification states that any irreducible admissible representation of $\GL_2(\R)$ is, up to equivalence, one of these: 
\begin{enumerate}
\item $D_l \otimes |\ |_{\R}^t$, for an integer $l \geq 1$ and $t \in \C$; or 
\item $J(\chi_1,\chi_2)$, for characters $\chi_i = \chi_{(s_i,\epsilon_i)}$ of $\R^*$ with $\Re(s_1) \geq \Re(s_2)$.
\end{enumerate}

\subsubsection{The local Langlands correspondence for $\GL_2(\R)$} 
\label{sec:llc-gl2r}
There is a canonical bijection $\pi \leftrightarrow \tau$ between equivalence classes of irreducible admissible representations $\pi$ of $\GL_2(\R)$ and 
equivalence classes of two dimensional semi-simple representations $\tau = \tau(\pi)$ of $W_{\R}$. We call $\tau$ the {\it Langlands parameter} of 
$\pi$. From the above classifications it is clear that under this correspondence, we have 
\begin{enumerate}
\item $\pi = D_l \otimes |\ |_{\R}^t \ \ \leftrightarrow \ \ \tau = \tau(l,t)$; for an integer $l \geq 1$ and $t \in \C$; and 
\item $\pi = J(\chi_1,\chi_2) \ \ \leftrightarrow \ \ \tau = \tau(\chi_1, \chi_2)$; for characters $\chi_i = \chi_{(s_i,\epsilon_i)}$ of $\R^*$.  
\end{enumerate}
In the second case, given $\chi_1$ and $\chi_2$, if necessary we reorder them such that $\Re(s_1) \geq \Re(s_2)$  which ensures that $J(\chi_1,\chi_2)$ is defined, while noting that reordering them does not change the equivalence class of $\tau(\chi_1, \chi_2)$. This bijection is canonical in that it preserves local factors and is equivariant under twisting. The local $L$-factor of an irreducible representation $\tau$ of $W_{\mathbb R}$ is as follows. (See Knapp \cite{knapp}.)
$$
L(s,\tau) = \left\{\begin{array}{ll}
\pi^{-(s+t)/2} \, \Gamma\left(\frac{s+t}{2}\right) &
\mbox{if $\tau = |\  |_{\mathbb R}^t$,} \\
\pi^{-(s+t+1)/2} \, \Gamma\left(\frac{s+t+1}{2}\right) &
\mbox{if $\tau = \epsilon_\R \otimes |\  |_{\mathbb R}^t$,} \\
2(2\pi)^{-(s+t+l/2)}\ \, \Gamma(s+t + l/2 ) &
\mbox{if $\tau = {\rm Ind}_{\C^*}^{W_\R}(\xi_l) \otimes |\ |_{\R}^t $ with
$l \geq 1$.}
\end{array}\right.
$$

\subsubsection{Algebraic automorphic representation} (See Clozel \cite[p.89]{clozel}.) 
\label{sec:algebraic}
Let $\Pi$ be an irreducible automorphic representation of $\GL_2(\A_F)$. 
We will work over a totally real number field $F$. 
The representation at infinity $\Pi_{\infty}$ is a tensor product
$$
\Pi_{\infty} = \otimes_{\eta \in S_{\infty}} \Pi_{\eta} = \Pi_{\eta_1} \otimes \cdots \otimes \Pi_{\eta_n}, 
$$
where $\Pi_{\eta}$ is an irreducible admissible representation of $\GL_2(F_\eta) = \GL_2(\R)$. For $1 \leq j \leq n$, let $\tau_j$ be the Langlands parameter of 
$\Pi_{\eta_j}$. The restriction of $\tau_j$ to $\C^*$ is a direct sum of characters: 
$$
\tau_j|_{\C^*} = \xi_{j_1} \oplus \xi_{j_2}, 
$$
with $\xi_{j_i} = \xi_{(s_{j_i},w_{j_i})}$. We say that an irreducible automorphic representation $\Pi$ is {\it algebraic} if 
$$
s_{j_i} = \frac12 + p_{j_i}, \ \  w_{j_i} = \frac12 + q_{j_i}, \ {\rm with} \ p_{j_i}, q_{j_i} \in \Z.
$$
(In other words, a global representation $\Pi$ is algebraic if all the exponents appearing in the characters of $\C^*$ coming from the representations 
$\Pi_{\eta}$ at infinity are half plus an integer.)

Note that the data $(s_{j_1}, w_{j_1}, s_{j_2}, w_{j_2})$ depends only on two of these numbers: 
\begin{enumerate}
\item the restriction of $\tau = \tau(l,t)$ to $\C^*$ is given by
$$
{\rm Ind}_{\C^*}^{W_R}(\xi_l  \otimes |\ |_{\C}^t)|_{\C^*} = \xi_l  \otimes |\ |_{\C}^t \oplus \xi_{-l}  \otimes |\ |_{\C}^t, 
$$
which looks like $(z^s \bar{z}^w, z^w \bar{z}^s)$ with $s = l/2 + t$ and $w = -l/2 + t$. 
\item Or,  if $\tau = \tau(\chi_1,\chi_2)$ with $\chi_i = \chi_{(s_i,\epsilon_i)}$, then the restriction of $\tau$ to $\C^*$ 
is $((z\bar{z})^{s_1}, (z\bar{z})^{s_2}).$
\end{enumerate}

\subsubsection{The infinity type of an algebraic automorphic representation} (See Clozel \cite[p.106]{clozel}.)
\label{sec:infinity-type}
Let $\Pi$ be an irreducible algebraic automorphic representation of $\GL_2(\A_F)$. 
The infinity type of $\Pi$ is an element of $\prod_{j=1}^{n} (\Z^2)^{\Z/2}$, i.e., it is an $n$-tuple of unordered pairs of integers, 
and is defined as follows: Consider $\Pi' = \Pi \otimes |\ |^{-1/2}$. Since $\Pi$ is algebraic, all the exponents of the characters of $\C^*$ coming 
from the infinite components of $\Pi'$ are  integers. For each real place $\eta_j$ for $1 \leq j \leq n$, the restriction to $\C^*$ of the Langlands parameter of the representation 
$\Pi'_{\eta_j}$, as described above, looks either like  $(z^{p_j} \bar{z}^{q_j}, z^{q_j} \bar{z}^{p_j})$ or like $((z\bar{z})^{p_j}, (z\bar{z})^{q_j})$ for integers $p_j$ and $q_j$. The infinity type of $\Pi$ is then defined as: 
$$
\infty(\Pi) := (\{p_1,q_1\}, \{p_2,q_2\}, \dots, \{p_n,q_n\}). 
$$

\subsubsection{Regular algebraic cuspidal automorphic representation} (See Clozel \cite[p.111]{clozel}.) 
Let $\Pi$ be an algebraic cuspidal automorphic representation, and suppose $(\{p_1,q_1\}, \{p_2,q_2\}, \dots, \{p_n,q_n\})$
is the infinity type of $\Pi$. We say that $\Pi$ is {\it regular} if  $p_j \neq q_j$ for all $1 \leq j \leq n$. 

A fundamental observation of Clozel is that a cuspidal automorphic representation $\Pi$ is regular algebraic if and only if 
$\Pi$ is of cohomological type, i.e., contributes to the cuspidal cohomology--possibly with nontrivial coefficients--of a locally symmetric space attached to 
$\GL_2$ over $F$.

\subsubsection{Infinite components of a regular algebraic cuspidal automorphic representation}
\label{sec:infinite-components}
Let us suppose that $\Pi$ is such a representation, and let us look closely at the possible exponents of the characters of $\C^*$ for the representations at infinity. Suppose one of the representations at infinity looks like $\Pi_{\eta} = J(\chi_1,\chi_2)$. Then its Langlands parameter is 
 $\tau = \tau(\chi_1,\chi_2)$ with $\chi_i = \chi_{(s_i,\epsilon_i)}$; the restriction of $\tau$ to $\C^*$ as mentioned above looks like 
 $((z\bar{z})^{s_1}, (z\bar{z})^{s_2}).$ Since $\Pi$ is algebraic we have $s_i = \frac12 + p_i$ with $p_i \in \Z$. Then $s_1 - s_2 \in \Z$. Since the inducing data is of Langlands type, we have $s_1 - s_2 \geq 0$. Since $\Pi$ is regular, $s_1-s_2 \geq 1$. 
 But then the full induced representation  $I(\chi_1,\chi_2)$ is reducible; hence the Langlands quotient $J(\chi_1,\chi_2)$ is a finite-dimensional representation.  But a cuspidal automorphic representation is globally generic (i.e., has a global Whittaker model) and so locally generic everywhere, and so every local component has to be an infinite-dimensional representation. Hence $\Pi_{\eta}$ cannot be equivalent to $J(\chi_1,\chi_2)$, and has to be of the form $D_l \otimes |\ |_{\R}^t$. In this case the exponents of the characters are $l/2 + t$ and $-l/2 + t$; hence if $l$ is even then $t \in \frac12  \Z$, and if $l$ is odd then $t \in \Z$.  We have just proved that {\it the infinite components of a regular algebraic cuspidal automorphic representation of $\GL_2(\A_F)$ are all discrete series representations twisted by  integral or half-integral powers of absolute value.} Further, there is a compatibility with all these twists afforded by 
the fact that there is a twist of the global representation which makes it unitary; see \ref{sec:purity}.

\subsubsection{Cohomology of a discrete series representation}
\label{sec:gl2r}
We will digress for a moment to observe that discrete series representations of $\GL_2(\R)$, possibly twisted by a half-integral power of absolute value, 
have nontrivial cohomology. For brevity, let $(\g_2,K_2^0) := (\gl_2, {\rm SO}(2)Z_2(\R)^0)$. 
For a dominant integral weight $\nu = (a,b)$, with integers $a \geq b$, the basic fact here is that  
there is a non-split exact sequence of $(\g_2,K_2^0)$-modules: 
\begin{equation}
\label{eqn:exact-seq-dsr}
0 \to 
D_{a-b +1} \otimes |\ |_{\R}^{(a+b)/2}  \to 
{\rm Ind}_{B_2(\R)}^{\GL_2(\R)}(\chi_{(a,a)}|\ |^{1/2} \otimes \chi_{(b,b)}|\ |^{-1/2}) \to
E_{\nu}
\to 0.
\end{equation}
(Recall from our earlier notation that $\chi_{(a,a)}(t) = |t|^a {\rm sgn}(t)^a = t^a$ for any integer $a$.) 
In other words, in the category $\Cat(\g_2,K_2^0)$ of admissible $(\g_2,K_2^0)$-modules, one has
$$
\Ext^1_{\Cat(\g_2,K_2^0)}(E_{\nu}, D_{a-b +1} \otimes |\ |_{\R}^{(a+b)/2}) \neq 0. 
$$
But 
\begin{eqnarray*}
H^1(\g_2, K_2^0; (D_{a-b +1} \otimes |\ |_{\R}^{(a+b)/2}) \otimes E_{\nu}^{\sf v}) & = & 
\Ext^1_{\Cat(\g_2,K_2^0)}( 1\!\!1, (D_{a-b +1} \otimes |\ |_{\R}^{(a+b)/2}) \otimes E_{\nu}^{\sf v}) \\
& = & \Ext^1_{\Cat(\g_2,K_2^0)}(E_{\nu}, D_{a-b +1} \otimes |\ |_{\R}^{(a+b)/2}) \neq 0.
\end{eqnarray*}
Further, it is well-known that $H^{\bullet}(\g_2, K_2^0; (D_{a-b +1} \otimes |\ |_{\R}^{(a+b)/2}) \otimes E_{\nu}^{\sf v}) \neq 0$ if and only if ${\bullet} = 1$, and that dimension of $H^1(\g_2, K_2^0; (D_{a-b +1} \otimes |\ |_{\R}^{(a+b)/2}) \otimes E_{\nu}^{\sf v})$ is two, with both the characters of 
${\rm O}(2)/{\rm SO}(2)$ appearing exactly once. (See, for example, Waldspurger \cite[Proposition I.4]{waldspurger}.) This detail will be useful below; 
see \ref{sec:pinning-down}. 
Finally, suppose $H^q(\g_2, K_2^0; \Xi \otimes E_{\nu}^{\sf v}) \neq 0$ for some irreducible admissible infinite-dimensional representation $\Xi$ of 
$\GL_2(\R)$, then the central character restricted to $\R_{>0}$ and the infinitesimal character of $\Xi$ are the same as that of $E_{\nu}$ which can be seen from Wigner's Lemma (Borel-Wallach \cite[Theorem I.4.1]{borel-wallach}). It follows from 
Langlands classification that $\Xi \simeq D_{a-b +1} \otimes |\ |_{\R}^{(a+b)/2}$.

\subsubsection{`Regular algebraic' = `Cohomological'} 
\label{sec:reg-alg-coh}
Let $\Pi$ be a cuspidal automorphic representation of $G(\A_F)$. A point of view afforded by Clozel \cite{clozel} is that 
$$
\Pi \ \mbox{is regular and algebraic} \ \iff \ \Pi \in {\rm Coh}(G,\mu^{\sf v}) \ \mbox{for some $\mu \in X^+(T)$}. 
$$

Let $\Pi \in {\rm Coh}(G,\mu^{\sf v})$. Say, $\mu = (\mu_1,\dots,\mu_n)$, and each $\mu_j = (a_j,b_j)$. Apply the K\"unneth theorem 
(see, for example,  Borel-Wallach \cite[I.1.3]{borel-wallach}) to see that 
$$
H^{\bullet}(\g_{\infty}, K_{\infty}^0; \Pi_{\infty} \otimes E_{\mu}^{\sf v}) = 
\bigoplus_{d_1 + \dots +d_n = \bullet} 
\otimes_{j=1}^n H^{d_j}(\gl_2, {\rm SO}(2)Z_2(\R)^0; \Pi_j \otimes E_{\mu_j}^{\sf v}).
$$
From \ref{sec:gl2r} the right hand side is nonzero only for $d_j =1$ and $\Pi_{\eta_j} = D_{a_j-b_j +1} \otimes |\ |_{\R}^{(a_j+b_j)/2}$. The exponents 
in the Langlands parameter of $\Pi_{\eta_j}$ are therefore given by: 
$$
\tau(\Pi_{\eta_j})(z)  = z^{\frac12 + a_j} \bar{z}^{-\frac12 + b_j}  +  z^{-\frac12 + b_j} \bar{z}^{\frac12 + a_j}, \ \ \forall z \in \C^* \subset W_{\R}.
$$
Hence $\Pi$ is algebraic. Next, working with $\Pi' = \Pi \otimes |\ |^{-1/2}$ we see that the infinity type of $\Pi$ is: 
$$
\infty(\Pi) := (\{a_1,b_1-1\}, \{a_2,b_2-1\}, \dots, \{a_n,b_n-1\}). 
$$
Since $\mu$ is dominant, $a_j \geq b_j$; whence $a_j > b_j-1$, i.e., $\Pi$ is regular and algebraic.

Conversely, let $\Pi$ be a regular algebraic cuspidal automorphic representation of $G(\A_F)$. As in \ref{sec:infinity-type} the infinity type of $\pi$ is given by
$$
\infty(\Pi) := (\{p_1,q_1\}, \{p_2,q_2\}, \dots, \{p_n,q_n\})
$$
for integers $p_j,q_j$ and regularity says that $p_j \neq q_j$. Without loss of generality assume that $p_j > q_j$. Put $a_j = p_j$ and 
$b_j = q_j+1$. Now let $\mu_j = (a_j,b_j)$ and $\mu = (\mu_1,\dots,\mu_n)$. Then $\mu \in X^+(T)$, and it follows from \ref{sec:infinite-components} that 
$\Pi \in {\rm Coh}(G,\mu^{\sf v})$.

\subsubsection{Clozel's purity lemma} 
\label{sec:purity}
(See Clozel \cite[Lemme 4.9]{clozel}.) Let $\mu \in X^+(T)$ be a dominant integral weight as above; 
say, $\mu = (\mu_1,\dots,\mu_n)$, and each $\mu_j = (a_j,b_j)$ with integers $a_j \geq b_j$. The purity lemma says that 
if the weight $\mu$ supports nontrivial cuspidal cohomology, i.e., if ${\rm Coh}(G,\mu^{\sf v})$ is nonempty, then $\mu$ satisfies the `purity' condition: there exists ${\sf w} = {\sf w}(\mu) \in \Z$ such that $a_j + b_j = {\sf w}$ for all $j$. This integer ${\sf w}$ is called the purity weight of $\mu$, and if 
$\Pi \in {\rm Coh}(G,\mu^{\sf v})$, then we will call ${\sf w}$ the purity weight of $\Pi$ as well. (Proof: Given a cuspidal representation $\Pi$, there is a complex number ${\sf w}$ such that the twisted representation $\Pi \otimes |\ |^{\sf w}$ is unitary; if further $\Pi$ is algebraic it follows that ${\sf w}$ must be an integer.) Let us denote the set of all pure dominant integral weights  by $X^+_0(T)$. If we start with a primitive holomorphic Hilbert modular form, as will be the case in the latter part of the paper, then this condition is automatically fulfilled; however, from the perspective of cohomological automorphic representations, the purity of the weight $\mu$ is an important condition to keep in mind.

\subsubsection{Pinning down generators for the cohomology class at infinity}
\label{sec:pinning-down}
Let $\Pi \in {\rm Coh}(G,\mu^{\sf v})$. Say, $\mu = (\mu_1,\dots,\mu_n)$, and each $\mu_j = (a_j,b_j)$.  
The space $H^n(\g_{\infty}, K_{\infty}^0; \Pi_{\infty} \otimes E_{\mu}^{\sf v})$ is acted upon by $K_{\infty}/K_{\infty}^0$. It follows from the 
K\"unneth rule (Borel-Wallach \cite[I.1.3]{borel-wallach}) and \ref{sec:gl2r} that every character of $K_{\infty}/K_{\infty}^0$ appears with multiplicity one in $H^n(\g_{\infty}, K_{\infty}^0; \Pi_{\infty} \otimes E_{\mu}^{\sf v})$. Fix such a character 
$\epsilon = (\epsilon_1,\dots,\epsilon_n)$ of $K_{\infty}/K_{\infty}^0$.
The purpose of this (somewhat tedious) paragraph is to 
fix a basis $[\Pi_{\infty}]^{\epsilon}$ for the one-dimensional vector space 
$H^n(\g_{\infty}, K_{\infty}^0; \Pi_{\infty} \otimes E_{\mu}^{\sf v})(\epsilon)$. 
(See (\ref{eqn:infinity-class}) below, especially when $\epsilon = (+,\dots,+)$.) Since K\"unneth gives: 
$$
H^n(\g_{\infty}, K_{\infty}^0; \Pi_{\infty} \otimes E_{\mu}^{\sf v})(\epsilon) = 
\bigotimes_{j=1}^n H^1(\gl_2, {\rm SO}(2)Z_2(\R)^0; \Pi_j \otimes E_{\mu_j}^{\sf v})(\epsilon_j), 
$$
it suffices to fix a basis $[\Pi_j]^{\epsilon_j}$ for the one-dimensional $H^1(\gl_2, {\rm SO}(2)Z_2(\R)^0; \Pi_j \otimes E_{\mu_j}^{\sf v})(\epsilon_j)$ 
and let 
$$
[\Pi_{\infty}]^{\epsilon} = \bigotimes_{j=1}^n [\Pi_j]^{\epsilon_j}.
$$

We now proceed to fix $[\Pi_j]^{\epsilon_j}$. Since we are working with only one copy of $\GL_2(\R)$, let us omit the subscript $j$ and slightly change our notations: Let $\nu = (\nu_1,\nu_2) \in X^+(T_2)$ be a dominant integral weight for the diagonal torus $T_2(\R)$ in $\GL_2(\R)$, and 
$E_{\nu}$ the corresponding finite-dimensional irreducible representation of $\GL_2(\C)$ of highest weight $\nu$. Let 
$\Xi \isom D_{\nu_1-\nu_2 +1} \otimes |\ |^{(\nu_1+\nu_2)/2}$. For any choice of sign in $\{\pm\} := ({\rm O}(2)/{\rm SO}(2))^{\widehat{}}$, with 
$+$ or $-$ being the trivial or nontrivial character of {\rm O}(2)/{\rm SO}(2) respectively, we will fix 
a 1-cocycle $[\Xi]^{\pm}$ so that  
$$
H^1(\gl_2, {\rm SO}(2)Z_2(\R)^0; \Xi \otimes E_{\nu}^{\sf v})(\pm) \ = \ \C\, [\Xi]^{\pm}.
$$

For any integer $m \geq 1$, let
$E_m$ be the $(m-1)\th$ symmetric power of the standard
(two-dimensional) representation $\C^2$ of $\GL_2(\C)$, i.e., 
$E_m = {\rm Sym}^{m-1}(\C^2)$. The representation $E_m$ is
irreducible and has dimension $m$. Denote the standard basis of 
$\C^2$ by $\{e_1,e_2\}$, which gives the `standard' basis $\{e_2^{m-1}, e_2^{m-2}e_1, \dots, e_1^{m-1}\}$ for $E_m$. This basis will be denoted as 
$\{{\sf s}_0, {\sf s}_1,\dots, {\sf s}_{m-1}\}$, i.e., ${\sf s}_j = e_1^j e_2^{m-1-j}$. The finite-dimensional 
irreducible representation $E_{\nu}$ of $\GL_2(\C)$ with highest weight $\nu$ is $
E_{\nu} = E_{\nu_1-\nu_2 + 1} \otimes {\rm det}^{\nu_2} = {\rm Sym}^{\nu_1-\nu_2}(\C^2) \otimes {\rm det}^{\nu_2}.$
By restriction, $E_{\nu}$ is also a representation of $\GL_2(\R)$. 
The central character of $E_{\nu}$ is given by $a \mapsto \omega_{\nu}(a) = a^{\nu_1+\nu_2}$ for all $a \in \R^*$. 
The contragredient representation of $E_{\nu}$ is denoted $E_{\nu}^{\sf v}$; one has 
$E_{\nu}^{\sf v} = E_{\nu^{\sf v}}$, where $\nu^{\sf v} = (-\nu_2, -\nu_1)$ is the dual weight 
of $\nu$. Explicitly, $E_{\nu}^{\sf v} = E_{\nu_1-\nu_2 + 1} \otimes {\rm det}^{-\nu_1}. $ We will need information on the restriction of 
$E_{\nu}^{\sf v}$ to various subgroups. 
 
In either of the representations $E_{\nu}$ or $E_{\nu^{\sf v}}$, the action of the diagonal torus in ${\rm SL}_2$ on the basis vectors is given by 
$\left(\begin{smallmatrix}t & 0 \\ 0 & t^{-1} \end{smallmatrix}\right) {\sf s}_j = t^{-\nu_1+\nu_2 + 2j} {\sf s}_j$. Hence, the standard basis realizes 
the weights: 
$$
\{-(\nu_1-\nu_2), -(\nu_1-\nu_2)+2,\dots,\nu_1-\nu_2\}. 
$$
In particular, the highest weight vector of $E_{\nu^{\sf v}}$ is given by 
$e^+_{\nu^{\sf v}} := {\sf s}_{\nu_1-\nu_2} = e_1^{\nu_1-\nu_2}.$ Observe that the standard basis gives a $\Q$-structure on $E_{\nu}^{\sf v}$.

The restriction of $E_{\nu}^{\sf v}$ to $\GL_1(\R) \hookrightarrow \GL_2(\R)$ 
is described by $\left(\begin{smallmatrix}t & 0 \\ 0 & 1 \end{smallmatrix}\right) {\sf s}_j = t^{j-\nu_1} {\sf s}_j.$ From this we easily deduce the following 
lemma which will be of use later on; see \ref{sec:criticality} below.
\begin{lemma}
\label{lem:trivial-repn-gl1}
Let $1\!\!1$ denote the trivial representation of $\GL_1(\R)$. Then 
$$
{\rm Hom}_{\GL_1(\R)}(E_{\nu}^{\sf v}, 1\!\!1) \neq 0 \iff \nu_1 \geq 0 \geq \nu_2. 
$$
In this situation, ${\rm Hom}_{\GL_1(\R)}(E_{\nu}^{\sf v}, 1\!\!1)$ is one-dimensional and  a nonzero 
 $\mathcal{T} \in {\rm Hom}_{\GL_1(\R)}(E_{\nu}^{\sf v}, 1\!\!1)$ is given by projecting to the coordinate corresponding to ${\sf s}_{\nu_1}$, i.e., 
$$
\mathcal{T}(\sum_{j=0}^{\nu_1-\nu_2} c_j {\sf s}_j) = c_{\nu_1}.
$$
\end{lemma}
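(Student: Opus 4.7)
The plan is to exploit the explicit $\GL_1(\R)$-weight-space decomposition of $E_\nu^{\sf v}$ supplied by the standard basis. Immediately before the lemma, the action of $\GL_1(\R) \hookrightarrow \GL_2(\R)$, $t \mapsto \left(\begin{smallmatrix} t & 0 \\ 0 & 1 \end{smallmatrix}\right)$, is recorded as
$$
\begin{pmatrix} t & 0 \\ 0 & 1 \end{pmatrix} {\sf s}_j \;=\; t^{j-\nu_1}{\sf s}_j,
$$
so the basis $\{{\sf s}_0, \dots, {\sf s}_{\nu_1 - \nu_2}\}$ simultaneously diagonalizes the action, with ${\sf s}_j$ spanning the weight space attached to the algebraic character $t \mapsto t^{j - \nu_1}$.

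Given any $\phi \in {\rm Hom}_{\GL_1(\R)}(E_{\nu}^{\sf v}, \1)$, equivariance applied to ${\sf s}_j$ yields $\phi({\sf s}_j) = t^{j-\nu_1}\phi({\sf s}_j)$ for every $t \in \R^{\times}$; selecting any $t > 0$ with $t \neq 1$ forces $\phi({\sf s}_j) = 0$ whenever $j \neq \nu_1$. Consequently $\phi$ is determined by the single scalar $\phi({\sf s}_{\nu_1})$, and a nonzero such $\phi$ exists exactly when the index $\nu_1$ falls inside the range $\{0, 1, \dots, \nu_1 - \nu_2\}$. This is the condition $\nu_1 \geq 0$ together with $\nu_1 \leq \nu_1 - \nu_2$, i.e., $\nu_2 \leq 0$; combined, $\nu_1 \geq 0 \geq \nu_2$. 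Under this hypothesis the space is one-dimensional and spanned by the coordinate functional $\mathcal{T}\colon \sum_{j} c_j {\sf s}_j \mapsto c_{\nu_1}$, whose $\GL_1(\R)$-equivariance is manifest since ${\sf s}_{\nu_1}$ lies in the zero-weight space. There is no serious obstacle here: the entire statement is immediate weight-space bookkeeping built on the formula already displayed before the lemma.
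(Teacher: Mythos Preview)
Your argument is correct and is precisely the elementary weight-space verification the paper alludes to: the paper itself omits the proof, saying only that it is easy to verify and a special case of the classical $\GL_n \to \GL_{n-1}$ branching law. Your computation is exactly that easy verification, so the two are in complete agreement.
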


\begin{proof}
This is easy to verify and we omit the proof. Let us mention that this is a special case of well-known classical branching laws from $\GL_n(\C)$ to $\GL_{n-1}(\C)$;  see Goodman-Wallach \cite{goodman-wallach}. 
\end{proof}

The ${\rm SO}(2)$-types of $E_{\nu}$, as well as $E_{\nu}^{\sf v}$, are given by: 
$$
E_{\nu}^{\sf v}|_{K_2^1} = 
E_{\nu}|_{K_2^1} =  \theta_{-(\nu_1-\nu_2)} \oplus  \theta_{-(\nu_1-\nu_2)+2}
\oplus \cdots \oplus \theta_{\nu_1-\nu_2-2} \oplus \theta_{\nu_1-\nu_2}, 
$$
where, for any integer $n$, $\theta_n$ is the character of ${\rm SO}(2)$ given by 
$\theta_n(r(t)) =  e^{-i nt}$, for all 
$r(t) = \left(\begin{smallmatrix} \cos{t} & -\sin{t} \\ \sin{t} & \cos{t} \end{smallmatrix}\right)$ in 
${\rm SO}(2)$. It is necessary to fix an ordered basis giving the above decomposition. 
Let $\{{\sf w}_1, {\sf w}_2\}$ denote the basis for $\C^2$ which diagonalizes the ${\rm SO}(2)$-action: 
$$
{\sf w}_1 = e_1 + ie_2, \  {\sf w}_2 = ie_1 + e_2; 
$$
it is easily verified that
$$
r(t) {\sf w}_1= e^{-it} {\sf w}_1 = \theta_1(r(t)) {\sf w}_1, \ \ 
r(t) {\sf w}_2 = e^{it} {\sf w}_2  = \theta_{-1}(r(t)) {\sf w}_2.
$$
The ordered basis $\{ {\sf w}_2^{\nu_1 - \nu_2}, {\sf w}_2^{\nu_1-\nu_2-1} {\sf w}_1, \dots, {\sf w}_1^{\nu_1-\nu_2}\}$ of $E_{\nu}^{\sf v}$ 
realizes the above decomposition of $E_{\mu}^{\sf v}$ into its $K$-types. 
Let ${\sf w}_{\nu^{\sf v}}^+ = {\sf w}_1^{\nu_1-\nu_2}$ be the basis vector realizing the highest non-negative $K$-type in $E_{\nu}^{\sf v}$, i.e., 
$\theta_{\nu_1-\nu_2}$; similarly, the lowest $K$-type is realized by ${\sf w}_{\nu^{\sf v}}^- = {\sf w}_2^{\nu_1-\nu_2}$. In terms of the standard basis: 
\begin{equation}
\label{eqn:gl2-k-type-to-standard}
\begin{split}
{\sf w}_{\nu^{\sf v}}^+ = & \ (e_1 + ie_2)^{\nu_1-\nu_2} \ = \ 
\sum_{\alpha=0}^{\nu_1-\nu_2} \binom{\nu_1-\nu_2}{\alpha} i^{\nu_1-\nu_2-\alpha} \, {\sf s}_{\alpha} \\
{\sf w}_{\nu^{\sf v}}^- = & \ (ie_1 + e_2)^{\nu_1-\nu_2} \ = \ 
\sum_{\alpha=0}^{\nu_1-\nu_2} \binom{\nu_1-\nu_2}{\alpha} i^{\alpha} \, {\sf s}_{\alpha}. 
\end{split}
\end{equation}

For a dominant integral weight $\nu = (\nu_1,\nu_2)$, and for $\Xi \isom D_{\nu_1-\nu_2 +1} \otimes |\ |^{(\nu_1+\nu_2)/2}$, 
using the exact sequence in (\ref{eqn:exact-seq-dsr}) we deduce that the ${\rm SO}(2)$-types of $\Xi$ are 
$$
\cdots \oplus \theta_{-(\nu_1-\nu_2 +4)} \oplus  \theta_{-(\nu_1-\nu_2 +2)} \oplus 
(\mbox{nothing here})
\oplus \theta_{\nu_1-\nu_2+2} \oplus \theta_{\nu_1-\nu_2+4} \oplus \cdots
$$
The missing $K$-types in {\it (nothing here)} correspond exactly to the $K$-types of $E_{\nu}$.  Let $\phi_{\pm(\nu_1-\nu_2+2)}$ 
be vectors in $\Xi$ with $K$-types $\theta_{\pm(\nu_1-\nu_2+2)}$, respectively. 
The vectors in $\Xi = D_{\nu_1-\nu_2 +1} \otimes |\ |^{(\nu_1+\nu_2)/2}$ may be identified with vectors 
in the induced representation ${\rm Ind}_{B_2}^{G_2}(\chi_{(\nu_1,\nu_1)}|\ |^{1/2} \otimes \chi_{(\nu_2,\nu_2)}|\ |^{-1/2})$ 
which is the middle term in the exact sequence (\ref{eqn:exact-seq-dsr}). In particular,  we may and shall normalize them as 
$$
\phi_{\pm(\nu_1-\nu_2+2)}\left(\left(\begin{smallmatrix}1 & 0 \\ 0 & 1 \end{smallmatrix}\right)\right) = 1.
$$

Recall that $(\g_2,K_2^0) := (\gl_2(\C), {\rm SO}(2)Z_2(\R)^0)$ and let $K_2^1 = {\rm SO}(2)$. 
The cochain complex 
$$
C^{\bullet} := {\rm Hom}_{K_2^0}(\wedge^{\bullet} \g_2/\k_2, \Xi \otimes E_{\nu}^{\sf v})
$$ 
computes $(\g_2,K_2^0)$-cohomology of $\Xi \otimes E_{\nu}^{\sf v}$. Since the central characters of $\Xi$ and $E_{\nu}$ are equal, this complex is same as 
${\rm Hom}_{K_2^1}(\wedge^{\bullet} \g_2/\k_2, \Xi \otimes E_{\nu}^{\sf v})$. It is easy to see 
that $\g_2/\k_2 = \theta_2 \oplus \theta_{-2}$ as a $K_2^1$-module: let $\{{\bf z}_1,{\bf z}_2\}$ be the basis for $\g_2/\k_2$ given by: 
$$
{\bf z}_1 =  i \left(\begin{array}{cc} 1 & i \\ i & -1 \end{array}\right), \ \  
{\bf z}_2 = i \left(\begin{array}{cc}1 & -i \\ -i & -1 \end{array}\right);
$$
it is easily checked that   
$$
{\rm Ad}(r(t))({\bf z}_1) = e^{-2it} {\bf z}_1 = \theta_2(r(t)){\bf z}_1 \ \ {\rm and} \ \ 
{\rm Ad}(r(t))({\bf z}_2) = e^{2it} {\bf z}_2 = \theta_{-2}(r(t)){\bf z}_2. 
$$
From the description of $K$-types of $\Xi$ and $E_{\nu}$ we see that $C^q= 0$ for all $q \neq 1$, and 
$$
C^1 = {\rm Hom}_{K_2^1}(\theta_{-2} \oplus \theta_2, \Xi \otimes E_{\nu}^{\sf v}) \isom \C^2.
$$ 
Fix a basis $\{f_{-2}, f_2\}$ for this two dimensional space $C^1$ as follows: $f_{-2}$ picks up the vector $\phi_{-\nu_1+\nu_2-2} \otimes {\sf w}_{\nu^{\sf v}}^+$ 
realizing the character $\theta_{-2}$; similarly, $f_2$ picks up the vector $\phi_{\nu_1-\nu_2+2} \otimes {\sf w}_{\nu^{\sf v}}^-$ 
realizing the character $\theta_{2}$. More precisely, 
$$
\begin{array}{ll}
f_{-2}({\bf z}_1) = 0, \ &\  f_{-2}({\bf z}_2) = \phi_{-\nu_1+\nu_2-2} \otimes {\sf w}_{\nu^{\sf v}}^+, \\
f_2({\bf z}_1) = \phi_{\nu_1-\nu_2+2} \otimes {\sf w}_{\nu^{\sf v}}^-, \ & \ f_2({\bf z}_2) = 0.
\end{array}
$$
Since $C^1 = {\rm Hom}_{K_2^0}(\g_2/\k_2, \Xi \otimes E_{\nu}^{\sf v}) \simeq 
( (\g_2/\k_2)^* \otimes \Xi \otimes E_{\nu}^{\sf v})^{K_2^0} $ we can transcribe these expressions for $f_{\pm 2}$ as follows: 
Let $\{ {\bf z}_1^*, {\bf z}_2^*\}$ be the basis for $(\g_2/\k_2)^*$ that is dual to the basis $\{{\bf z}_1, {\bf z}_2\}$ for 
$\g_2/\k_2$. Then 
$$
f_{-2} = {\bf z}_2^* \otimes \phi_{-\nu_1+\nu_2-2} \otimes {\sf w}_{\nu^{\sf v}}^+, \ \ {\rm and}\ \ 
f_2 = {\bf z}_1^* \otimes \phi_{\nu_1-\nu_2+2} \otimes {\sf w}_{\nu^{\sf v}}^-.
$$
To summarize we have: 
$$
H^1(\g_2,K_2^0; \Xi \otimes E_{\nu}^{\sf v}) = 
{\rm Hom}_{K_2^0}(\wedge^1 \g_2/\k_2, \Xi \otimes E_{\nu}^{\sf v}) = ( (\g_2/\k_2)^* \otimes \Xi \otimes E_{\nu}^{\sf v})^{K_2^0}  = 
\C f_{-2} \oplus  \C f_2
$$
with explicit expressions for $f_{\pm 2}$ as relative Lie algebra cocycles.

To identify the class $[\Xi]^{\pm}$, a generator for the one-dimensional space $H^1(\g_2,K_2^0; \Xi \otimes E_{\nu}^{\sf v})(\pm)$, 
we need to know the action of the element $\delta = \left(\begin{smallmatrix}-1 & 0 \\ 0 & 1 \end{smallmatrix}\right)$ which 
represents the nontrivial element in $K_2/K_2^0$. Recall that the action of $\delta$ on any $f \in {\rm Hom}_{K_2^0}(\wedge^1 \g_2/\k_2, \Xi \otimes E_{\nu}^{\sf v})$ is given by 
$$(\delta f)({\bf z}) = (\Xi \otimes E_{\nu}^{\sf v})(\delta) (f({\rm Ad}(\delta^{-1}){\bf z})).$$

\begin{lemma}
\label{lem:delta}
The action of $\delta$ on $f_{\pm 2}$ is given by
$$
\delta f_{-2} = i^{\nu_1 - \nu_2}f_2, \ \ {\rm and} \ \ \delta f_2 = i^{-\nu_1 + \nu_2}f_{-2}.
$$
In particular, $\delta$ acts by $\pm 1$ on the cocycle $[\Xi]^{\pm} \ := \ f_2 \pm i^{-\nu_1+\nu_2}f_{-2}$.
\end{lemma}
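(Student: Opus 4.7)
The plan is to compute $\delta f_{\pm 2}$ directly from the definition $(\delta f)({\bf z}) = (\Xi \otimes E_\nu^{\sf v})(\delta)(f({\rm Ad}(\delta^{-1}){\bf z}))$, and then the statement about $[\Xi]^\pm$ being a $\pm$-eigenvector follows mechanically by linearity since $i^{\nu_1-\nu_2} \cdot i^{-\nu_1+\nu_2} = 1$.

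I would carry out the computation in three separate pieces. First, the adjoint action: since $\delta = \delta^{-1} = {\rm diag}(-1,1)$ is diagonal, a direct matrix multiplication on the explicit expressions for ${\bf z}_1,{\bf z}_2$ interchanges them, giving ${\rm Ad}(\delta){\bf z}_1 = {\bf z}_2$ and ${\rm Ad}(\delta){\bf z}_2 = {\bf z}_1$. Second, the action on the $E_\nu^{\sf v}$-factor: using ${\sf w}_1 = e_1+ie_2$ and ${\sf w}_2 = ie_1+e_2$, one checks $\delta{\sf w}_1 = i{\sf w}_2$ and $\delta{\sf w}_2 = -i{\sf w}_1$, and raising to the $(\nu_1-\nu_2)$-th power while inserting the determinantal twist $\det(\delta)^{-\nu_1}=(-1)^{-\nu_1}$ coming from the factor $\det^{-\nu_1}$ in $E_\nu^{\sf v}$ yields
\[
E_\nu^{\sf v}(\delta){\sf w}_{\nu^{\sf v}}^+ \ = \ (-1)^{-\nu_1}i^{\nu_1-\nu_2}{\sf w}_{\nu^{\sf v}}^-,\qquad E_\nu^{\sf v}(\delta){\sf w}_{\nu^{\sf v}}^- \ = \ (-1)^{-\nu_1}i^{-(\nu_1-\nu_2)}{\sf w}_{\nu^{\sf v}}^+.
\]
Third, the action on the $\Xi$-factor: since $\delta\in T_2\subset B_2$, I can evaluate in the principal-series realization inside ${\rm Ind}_{B_2}^{G_2}(\chi_{(\nu_1,\nu_1)}|\ |^{1/2}\otimes\chi_{(\nu_2,\nu_2)}|\ |^{-1/2})$. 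The conjugation identity $\delta r(t)\delta^{-1} = r(-t)$ forces $\pi(\delta)\phi_l$ to have $K$-type $\theta_{-l}$, hence a scalar multiple of $\phi_{-l}$; the scalar is read off by evaluating at the identity, using the inducing character at $\delta$, giving $\pi(\delta)\phi_{\pm(\nu_1-\nu_2+2)} = (-1)^{\nu_1}\phi_{\mp(\nu_1-\nu_2+2)}$.

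Assembling these, $(\delta f_{-2})({\bf z}_1) = (\Xi\otimes E_\nu^{\sf v})(\delta)(f_{-2}({\bf z}_2))$ evaluates to $(-1)^{\nu_1}(-1)^{-\nu_1}i^{\nu_1-\nu_2}(\phi_{\nu_1-\nu_2+2}\otimes{\sf w}_{\nu^{\sf v}}^-) = i^{\nu_1-\nu_2}f_2({\bf z}_1)$, and the same pattern on ${\bf z}_2$ gives $0=0$, so $\delta f_{-2} = i^{\nu_1-\nu_2}f_2$; the analogous computation for $\delta f_2$ uses the second identity for $E_\nu^{\sf v}(\delta)$ and yields $\delta f_2 = i^{-\nu_1+\nu_2}f_{-2}$.

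The main obstacle I anticipate is bookkeeping: the powers $(-1)^{\nu_1}$ coming from the principal-series character and the twist $\det^{-\nu_1}$ in $E_\nu^{\sf v}$ look asymmetric between the $f_2$- and $f_{-2}$-computations, and they only conspire to cancel cleanly after writing $(-1)^{\nu_1}=i^{2\nu_1}$ and collecting exponents of $i$. Once the two displayed identities are in hand, the final assertion is immediate: $\delta[\Xi]^\pm = \delta f_2 \pm i^{-\nu_1+\nu_2}\delta f_{-2} = i^{-\nu_1+\nu_2}f_{-2}\pm i^{-\nu_1+\nu_2}\cdot i^{\nu_1-\nu_2}f_2 = \pm(f_2\pm i^{-\nu_1+\nu_2}f_{-2}) = \pm[\Xi]^\pm$.
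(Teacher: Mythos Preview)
Your proposal is correct and follows essentially the same approach as the paper's proof: compute the three ingredients ${\rm Ad}(\delta^{-1}){\bf z}_1 = {\bf z}_2$, $\Xi(\delta)\phi_{\pm(\nu_1-\nu_2+2)} = i^{2\nu_1}\phi_{\mp(\nu_1-\nu_2+2)}$, and $E_\nu^{\sf v}(\delta){\sf w}_{\nu^{\sf v}}^+ = i^{-(\nu_1+\nu_2)}{\sf w}_{\nu^{\sf v}}^-$, and assemble. Your scalars $(-1)^{\nu_1}$ and $(-1)^{-\nu_1}i^{\nu_1-\nu_2}$ are exactly the paper's $i^{2\nu_1}$ and $i^{-(\nu_1+\nu_2)}$ rewritten, so there is no substantive difference.
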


\begin{proof}
The proof is routine; here are some useful relations: 
\begin{eqnarray*}
{\rm Ad}(\delta^{-1})({\bf z}_1) & = & {\bf z}_2, \\ 
\Xi(\delta)(\phi_{\nu_1-\nu_2+2}) & = & i^{2\nu_1} \phi_{-(\nu_1-\nu_2+2)}, \\ 
E_{\nu}^{\sf v}(\delta)({\sf w}_{\nu^{\sf v}}^+) & = & i^{-(\nu_1+\nu_2)} {\sf w}_{\nu^{\sf v}}^-. 
\end{eqnarray*}
\end{proof}

Let $\Whit(\Xi)$ denote the Whittaker model of $\Xi$ with respect to a nontrivial additive character $\psi_{\R}$ of $\R$; which we recall 
from \ref{sec:additive-character}, is taken to be $x \mapsto \psi_{\R}(x) = e^{-2\pi i x}$. For any $\phi \in \Xi$, let $\lambda = w(\phi)$ denote the corresponding Whittaker vector. The cohomology class $[\Xi]^{\pm}$ which generates $H^1(\g_2,K_2^0; \Whit(\Xi) \otimes E_{\nu}^{\sf v})(\pm)$ is explicitly given by
$$
[\Xi]^{\pm} \ = \ {\bf z}_1^* \otimes \lambda_{\nu_1-\nu_2+2} \otimes {\sf w}_{\nu^{\sf v}}^-  \, \pm \, 
i^{-\nu_1+\nu_2} {\bf z}_2^* \otimes \lambda_{-(\nu_1-\nu_2+2)} \otimes {\sf w}_{\nu^{\sf v}}^+
$$
Using (\ref{eqn:gl2-k-type-to-standard}) we can also express this class as: 
\begin{equation}
\label{eqn:gl2-class}
[\Xi]^{\pm} =  \sum_{l=1}^2 \sum_{\alpha=0}^{\nu_1-\nu_2}  {\bf z}_l^* \otimes \lambda^{\pm}_{l,\alpha} \otimes {\sf s}_{\alpha} 
\end{equation}
where 
\begin{equation}
\lambda_{1,\alpha}^{\pm} = \binom{\nu_1-\nu_2}{\alpha} i^{\alpha} \lambda_{\nu_1-\nu_2+2}, \ \ \ 
\lambda_{2,\alpha}^{\pm} =  \pm \binom{\nu_1-\nu_2}{\alpha} i^{-\alpha} \lambda_{-(\nu_1-\nu_2+2)}
\end{equation}

Let us now go back to $\Pi \in {\rm Coh}(G,\mu^{\sf v})$ and write down $[\Pi_{\infty}]^{+\!+}$ explicitly, where $+\!+$ is short for $(+,\dots,+)$. 
Since  
$$
[\Pi_{\infty}]^{+\!+} = \bigotimes_{j=1}^n [\Pi_j]^+
$$
we will tensor over $j$ the class $[\Pi_j]^+$. 

Let $\{{\sf s}_{j,0}, {\sf s}_{j,1},\dots, {\sf s}_{j, a_j-b_j}\}$ denote the standard basis for the representation $E_{\mu_j}^{\sf v}$. 
Let $\alpha = (\alpha_1,\dots,\alpha_n)$ be an $n$-tuple of integers such that $0 \leq \alpha_j \leq a_j - b_j$. Let 
$$
{\sf s}_{\alpha} = \otimes_{j=1}^n {\sf s}_{j , \alpha_j}. 
$$
Then the set $\{ {\sf s}_{\alpha}\}_{\alpha}$, as $\alpha$ runs through all $n$-tuples as above gives a basis for $E_{\mu}^{\sf v}$. 
Next, let $l = (l_1,\dots,l_n)$ be an $n$-tuple of integers such that $l_j \in \{1,2\}$. For each such $l$, put
$$
{\bf z}_l^* = \otimes_{j=1}^n {\bf z}^*_{j,l_j}, 
$$
where for each $1 \leq j \leq n$ we let 
${\bf z}_{j,1} = {\bf z}_1$ and ${\bf z}_{j,2} = {\bf z}_2$ 
as elements of $\gl_2 = {\rm Lie}(\GL_2(F_{\eta_j}))$; and as before ${\bf z}^*$ is the corresponding element in the dual basis. 
For each $1 \leq j \leq n$, and $\alpha_j$ as above, let 
$$
\lambda_{j, 1,\alpha_j} = \binom{\nu_1-\nu_2}{\alpha_j} i^{\alpha_j} \lambda_{\nu_1-\nu_2+2}, \ \ \ 
\lambda_{j, 2,\alpha_j} = \binom{\nu_1-\nu_2}{\alpha_j} i^{-\alpha_j} \lambda_{-(\nu_1-\nu_2+2)}
$$
and for any $l$ and $\alpha$ put 
$$
W_{l,\alpha,\infty} = \otimes_{j=1}^n \lambda_{j, l_j,\alpha_j} \in \Whit(\Pi_{\infty}, \psi_{\infty}).  
$$
We have the following expression 
\begin{equation}
\label{eqn:lambda-j}
[\Pi_{\infty}]^{+\!+} = \bigotimes_{j=1}^n [\Pi_j]^+ = \bigotimes_{j=1}^n \left(\sum_{l_j =1}^2 \sum_{\alpha_j = 0}^{a_j-b_j} 
{\bf z}_{j,l_j}^* \otimes \lambda_{j,l_j,\alpha_j} \otimes {\sf s}_{j,\alpha_j} \right).  
\end{equation}
Interchanging the tensor and the summations and regrouping we get: 
\begin{equation}
\label{eqn:infinity-class}
\boxed{
[\Pi_{\infty}]^{+\!+} = \sum_{l = (l_1,\dots,l_n)} \sum_{\alpha = (\alpha_1,\dots,\alpha_n)} {\bf z}_l^* \otimes W_{l, \alpha, \infty} \otimes {\sf s}_{\alpha}
}\end{equation}
which is our chosen generator of the one-dimensional space
$H^n(\g_{\infty}, K_{\infty}^0; \Pi_{\infty} \otimes E_{\mu}^{\sf v})(+,\dots,+)$ and is expressed as a $K_{\infty}$-fixed element of 
$$
(\g_{\infty}/\k_{\infty})^* \otimes \Whit(\Pi_{\infty},\psi_{\infty}) \otimes E_{\mu^{\sf v}}.
$$

\subsection{Periods and period relations}
\label{sec:periods}

\subsubsection{Action of ${\rm Aut}(\C)$ on global representations}

The following theorem  is due to Harder \cite{harder-general} and Waldspurger \cite{waldspurger} for $\GL_2$ over any number field (although we state it only for our totally real base field $F$). It was generalized to $\GL_n$ over any number field by Clozel \cite{clozel}. We have adapted the statement from Clozel's and Waldspurger's articles. In a classical context of Hilbert modular forms it is due to Shimura \cite{shimura-duke}; see also 
Garrett \cite[Theorem 6.1]{garrett}.

\begin{thm}
\label{thm:q-pi}
Let $\Pi$ be a regular algebraic cuspidal automorphic representation. 
For any $\sigma \in {\rm Aut}(\C)$, define an abstract irreducible representation ${}^{\sigma}\Pi = \otimes_v {}^{\sigma}\Pi_v$ of $\GL_2(\A_F)$ as follows: 
\begin{itemize}
\item For any finite place $v$, suppose the representation space of $\Pi_v$ is $V_v$, then pick any $\sigma$-linear isomorphism $A_v : V_v \to V_v'$, and 
define ${}^{\sigma}\Pi_v$ as the representation of $\GL_2(F_v)$ acting on $V_v'$ by ${}^{\sigma}\Pi_v(g) = A_v \circ \Pi_v(g) \circ A_v^{-1}$. The definition of ${}^{\sigma}\Pi_v$ is, up to equivalence, independent of all the choices made. 
\item For $v \in S_{\infty}$, define ${}^{\sigma}\Pi_v := \Pi_{\sigma^{-1} v}$, i.e., 
$({}^{\sigma}\Pi)_{\infty} = \otimes_{\eta} \Pi_{ \sigma^{-1} \circ \eta }$ where $\eta$ runs through the set ${\rm Hom}(F, \C)$ of all infinite places of the totally real field $F$.
\end{itemize}
Then ${}^{\sigma}\Pi$ is also a regular algebraic cuspidal automorphic representation. The rationality field $\Q(\Pi_f)$, which 
is defined as the subfield of $\C$ fixed by $\{\sigma\ : \ {}^{\sigma}(\Pi_f) \simeq \Pi_f \}$, is a number field. For any field 
$E$ containing $\Q(\Pi_f)$, the representation $\Pi_f$ of $\GL_2(\A_{F,f})$ has an $E$-structure that is unique up to homotheties. 
\end{thm}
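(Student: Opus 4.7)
The strategy is to obtain the $\sigma$-twist via the cohomological realization of $\Pi$ described in \ref{sec:reg-alg-coh}: since $\Pi$ is regular algebraic there is a pure $\mu \in X_0^+(T)$ with $\Pi \in {\rm Coh}(G,\mu^{\sf v})$. The coefficient system $\E_\mu^{\sf v}$ is algebraic, so it descends to a local system defined over a number field $E_\mu$ contained in the Galois closure of $F$; consequently the sheaf cohomology $H^{\bullet}(S^G_{K_f}, \E_\mu^{\sf v})$ carries a canonical $\sigma$-semilinear action of ${\rm Aut}(\C/E_\mu)$ commuting with the Hecke action of $G(\A_f)$. This is extended to all of ${\rm Aut}(\C)$ by coupling $\sigma$ with the permutation of the tensor factors of $E_\mu^{\sf v} = \otimes_j E_{\mu_j}^{\sf v}$ induced by $\sigma$ acting on $\Hom(F,\C) = \{\eta_1,\dots,\eta_n\}$.

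First I would define ${}^{\sigma}\Pi$ by transporting the $\Pi$-isotypic component of cuspidal cohomology along this combined ${\rm Aut}(\C)$-action; the image lies inside $H^{\bullet}_{\rm cusp}(S^G, \E_{\sigma\mu}^{\sf v})$, where $\sigma\mu$ denotes the permuted weight. I would then verify locally that at each finite place $v$ this agrees with the abstract definition via $\sigma$-linear isomorphisms: for unramified $v$ the Hecke eigenvalues are carried to their $\sigma$-conjugates, matching the Satake parameters of the naive ${}^{\sigma}\Pi_v$, and strong multiplicity one pins down the representation globally. At ramified places the $\sigma$-linear transport on finite-dimensional $K_v$-fixed subspaces is forced by Hecke-equivariance. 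At the archimedean places, the Galois permutation of tensor factors of $E_\mu^{\sf v}$ produces precisely the prescribed permutation ${}^{\sigma}\Pi_\eta = \Pi_{\sigma^{-1}\circ\eta}$ of archimedean components. Thus ${}^{\sigma}\Pi$ sits in ${\rm Coh}(G,(\sigma\mu)^{\sf v})$, hence is regular algebraic cuspidal automorphic.

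For the number-field assertion, I would fix an open-compact $K_f$ with $\Pi_f^{K_f}\neq 0$. Then $H^{\bullet}_{\rm cusp}(S^G_{K_f}, \E_\mu^{\sf v})$ is finite-dimensional, so only finitely many Hecke-eigensystems occur inside it, and the ${\rm Aut}(\C)$-action permutes these together with the finite set of weights $\{\sigma\mu\}$. Hence the stabilizer of the isomorphism class of $\Pi_f$ has finite index in ${\rm Aut}(\C)$, so $\Q(\Pi_f)$ is a number field. To construct the $E$-structure on $\Pi_f$ for any $E \supseteq \Q(\Pi_f)$, I would take ${\rm Aut}(\C/E)$-fixed points inside the $\Pi_f$-isotypic subspace of cuspidal cohomology at each level $K_f$; these descend to compatible $E$-rational structures on $\Pi_f^{K_f}$, and passing to the limit over $K_f$ yields an $E$-structure on $\Pi_f$. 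Uniqueness up to homothety is Schur's lemma for the Hecke algebra action on the irreducible module $\Pi_f$.

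The main obstacle is the archimedean compatibility: verifying that the combined cohomological ${\rm Aut}(\C)$-action really implements the prescribed permutation ${}^{\sigma}\Pi_\eta = \Pi_{\sigma^{-1}\circ\eta}$ of archimedean factors. This reduces to a careful bookkeeping of how $\sigma$ simultaneously acts on the complex points of the locally symmetric space $S^G_{K_f}$ (through the real embeddings $\eta_j$ defining $G(\R)$) and on the algebraic coefficient representation $E_\mu^{\sf v}$, matched against the explicit relative Lie algebra cohomology classes described in \ref{sec:pinning-down}.
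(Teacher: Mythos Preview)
The paper does not give its own proof of this theorem; it is stated as a known result and attributed to Harder, Waldspurger, and Clozel (with Shimura and Garrett mentioned for the classical analogue). So there is no in-paper argument to compare against.

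That said, your outline is essentially the Clozel approach the paper is citing: transport the $\Pi_f$-isotypic piece of cuspidal cohomology via the natural ${\rm Aut}(\C)$-action on Betti cohomology with coefficients in $\E_\mu^{\sf v}$, check that this matches the abstract $\sigma$-twist at finite places via Hecke eigenvalues and strong multiplicity one, and read off the archimedean permutation from the $\sigma$-action on the weight $\mu$ indexed by $\Hom(F,\C)$. Your finiteness argument for $\Q(\Pi_f)$ and the construction of the $E$-structure by taking ${\rm Aut}(\C/E)$-invariants in cohomology are exactly the standard route. The uniqueness-up-to-homothety is, as you say, Schur's lemma; the paper later invokes precisely this in the form of Waldspurger's Lemme~I.1.

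One small refinement: you describe the ${\rm Aut}(\C)$-action on the coefficient system as first descending $\E_\mu^{\sf v}$ to a number field $E_\mu$ and then extending by permuting factors. In practice (and in Clozel's treatment) the action is defined in one stroke on the \emph{Betti} cohomology $H^n_B(S^G_{K_f},\E_\mu^{\sf v})$ by simultaneously applying $\sigma$ to the coefficients and to the indexing of $\mu$ by embeddings; the point is that the local system is defined over $\Q$ once one works with the full ${\rm Res}_{F/\Q}$-picture rather than factorwise. This is what makes the archimedean bookkeeping you flag as the ``main obstacle'' come out cleanly, and it is the content of the remark in the paper (just after the theorem) that cuspidal cohomology inherits a $\Q(\mu)$-structure from the canonical $\Q(\mu)$-structure on Betti cohomology. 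Your sketch is correct in spirit; just be aware that the clean formulation uses the $\Q$-group $G={\rm Res}_{F/\Q}(\GL_2)$ rather than $\GL_2(\R)^n$ with an ad hoc permutation.
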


(Note that the above action is a left-action, i.e., ${}^{\sigma \tau}\Pi = {}^{\sigma} ({}^{\tau}\Pi )$.) 
Suppose $\sigma$ fixes $\Pi_f$ for a representation $\Pi$ as in the theorem above, then by the strong multiplicity one theorem, $\sigma$ fixes $\Pi$; which justifies a change in notation: $\Q(\Pi)$ instead of $\Q(\Pi_f)$.

\subsubsection{Periods}
We now look closely at the assertion that $\Pi_f$ has an $E$-structure. On the one hand, a cuspidal automorphic representation $\Pi$ of 
$\GL_n(\A_F)$ admits a Whittaker model, and these models carry a natural rational structure. On the other hand, if $\Pi$ is regular and algebraic, then 
it contributes to cuspidal cohomology and from this arises a rational structure on a cohomological realization of $\Pi$. One defines periods by playing off these rational structures against each other. (Another word for these `periods' might be `regulators', as the definition our periods is very close in spirit to Borel's regulators \cite{borel-pisa}.)  The rest of \ref{sec:periods} is a very brief summary of Raghuram-Shahidi \cite{raghuram-shahidi-imrn}.

As a matter of definition/notation, given a ${\mathbb C}$-vector space $V$, and given a subfield $E \subset {\mathbb C}$, by an $E$-structure on $V$ we mean an $E$-subspace $V_E$ such that the canonical map $V_E \otimes_E \C \to V$ is an isomorphism. Further, if $V$ is a representation space for the action of a group $G$, then we will need $V_E$ to be $G$-stable. Fixing an $E$-structure gives an action of ${\rm Aut}({\mathbb C}/E)$ on $V$, 
by making it act on the second factor in $V = V_E \otimes_E {\mathbb C}$. Having fixed an 
$E$-structure, for any extension $E'/E$, we have a canonical $E'$-structure
by letting $V_{E'} = V_E \otimes_E E'$.

\subsubsection{Rational structures on Whittaker models}
Recall from \ref{sec:additive-character} that we have fixed a nontrivial character $\psi = \psi_{\infty}\otimes \psi_f$ of $F\backslash {\mathbb A}_F$. 
Let $\Whit(\Pi, \psi)$ be the Whittaker model of 
$\Pi$, and this factors as 
$\Whit(\Pi,\psi) = \Whit(\Pi_{\infty},\psi_{\infty}) \otimes \Whit(\Pi_f,\psi_f)$.
There is a semi-linear action of ${\rm Aut}({\mathbb C})$ on $\Whit(\Pi_f,\psi_f)$ 
which is defined as follows. (See Harder \cite[pp.79-80]{harder-general}.) Consider:
{\small
$$
\begin{array}{clclclclc}
{\rm Aut}({\mathbb C}/{\mathbb Q}) & \to & 
{\rm Gal}(\overline{\mathbb Q}/{\mathbb Q}) & \to&
{\rm Gal}({\mathbb Q}(\mu_{\infty})/{\mathbb Q}) & \to &
\widehat{{\mathbb Z}}^{\times} \simeq \prod_p {\mathbb Z}_p^{\times} & \subset & 
\prod_p \prod_{\mathfrak{p}|p} \mathcal{O}_{\mathfrak{p}}^{\times} \\
\sigma & \mapsto & \sigma |_{\overline{\mathbb Q}} & \mapsto & 
\sigma |_{{\mathbb Q}(\mu_{\infty})} & \mapsto & t_{\sigma} & \mapsto & t_{\sigma} = (t_{\sigma, \p})_{\p} 
\end{array}
$$}

\noindent
where the last inclusion is the one induced by the diagonal embedding of 
${\mathbb Z}_p^{\times}$ into $\prod_{\mathfrak{p}|p} 
\mathcal{O}_{\mathfrak{p}}^{\times}$. The element $t_{\sigma}$ at the end can
be thought of as an element of ${\mathbb A}_{F,f}^{\times}$. 
Let $[t_{\sigma}^{-1}]$ denote the diagonal matrix 
${\rm diag}(t_{\sigma}^{-1},1)$ regarded as an 
element of  ${\rm GL}_2({\mathbb A}_{F,f})$. 
For $\sigma \in {\rm Aut}({\mathbb C})$ and 
$W \in \Whit(\Pi_f,\psi_f)$, define the function ${}^{\sigma}W$ by
$$
{}^{\sigma}W(g_f) = \sigma(W([t_{\sigma}^{-1}]g_f))
$$
for all $g_f \in {\rm GL}_2({\mathbb A}_{F,f})$. Note that this action makes sense locally, by replacing
$t_{\sigma}$ by $t_{\sigma, \mathfrak{p}}$. Further, if $\Pi_{\p}$ is unramified, then a spherical vector is 
mapped to a spherical vector under $\sigma$. If we normalize the spherical vector to take the value $1$ on 
the identity, then $\sigma$ fixes this vector. This makes the local and global actions of $\sigma$ compatible.

\begin{lemma}
\label{lem:rational-whittaker}
With notation as above, $W \mapsto {}^{\sigma}W$ is a $\sigma$-linear ${\rm GL}_2({\mathbb A}_{F,f})$-equivariant
isomorphism from $\Whit(\Pi_f,\psi_f)$ onto $W({}^{\sigma}\Pi_f, \psi_f)$. For any finite extension $E/{\mathbb Q}(\Pi_f)$ we have
an $E$-structure on $\Whit(\Pi_f,\psi_f)$ by taking invariants: 
$$
\Whit(\Pi_f,\psi_f)_E = \Whit(\Pi_f,\psi_f)^{{\rm Aut}({\mathbb C}/E)}.
$$
\end{lemma}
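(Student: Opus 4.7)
The lemma has two distinct parts: identifying $W \mapsto {}^\sigma W$ as a $\sigma$-linear $\GL_2(\A_{F,f})$-equivariant isomorphism onto $\Whit({}^\sigma \Pi_f, \psi_f)$, and producing an $E$-structure by taking invariants.

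For the first part, the key computation is that ${}^\sigma W$ transforms under the unipotent radical by the same character $\psi_f$. Using the elementary identity
$$
[t_\sigma^{-1}] \begin{pmatrix} 1 & x \\ 0 & 1 \end{pmatrix} = \begin{pmatrix} 1 & t_\sigma^{-1} x \\ 0 & 1 \end{pmatrix} [t_\sigma^{-1}]
$$
one finds
$$
{}^\sigma W\!\left(\begin{pmatrix} 1 & x \\ 0 & 1 \end{pmatrix} g_f\right) \;=\; \sigma\!\left(\psi_f(t_\sigma^{-1} x)\right)\, {}^\sigma W(g_f).
$$
Since $\psi_f$ takes values in $\mu_\infty$ and $\sigma$ acts on $\mu_\infty$ by the cyclotomic character $\sigma|_{\Q(\mu_\infty)} \mapsto t_\sigma$, one has $\sigma(\psi_f(y)) = \psi_f(t_\sigma y)$ for every $y \in \A_{F,f}$; specializing $y = t_\sigma^{-1} x$ yields $\sigma(\psi_f(t_\sigma^{-1} x)) = \psi_f(x)$, as required. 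The $\sigma$-linearity and $\GL_2(\A_{F,f})$-equivariance (for right translation) are immediate, since $[t_\sigma^{-1}]$ is inserted on the left of $g_f$. By construction, and by the local definition of ${}^\sigma \Pi_v$ at each finite $v$, the induced $\GL_2(\A_{F,f})$-module structure on the image is isomorphic to ${}^\sigma \Pi_f$; uniqueness of the $\psi_f$-Whittaker model (Theorems \ref{local_whittaker} and \ref{whittaker}) then forces the image to coincide with $\Whit({}^\sigma \Pi_f, \psi_f)$. A two-sided inverse is provided by $V \mapsto {}^{\sigma^{-1}} V$.

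For the $E$-structure, restrict to $\sigma \in {\rm Aut}(\C/E) \subset {\rm Aut}(\C/\Q(\Pi_f))$, so that ${}^\sigma \Pi_f \simeq \Pi_f$ and $W \mapsto {}^\sigma W$ becomes a $\sigma$-semilinear self-isomorphism of $\Whit(\Pi_f, \psi_f)$. Passing to any open compact level $K_f \subset \GL_2(\A_{F,f})$, admissibility makes $\Whit(\Pi_f, \psi_f)^{K_f}$ a finite-dimensional $\C$-vector space; on it the semilinear action of ${\rm Aut}(\C/E)$ factors through a finite Galois quotient, and the standard Speiser/Hilbert~90 descent for $\GL_n$ produces an $E$-form of this subspace. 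These $E$-forms are compatible as $K_f$ shrinks and assemble, via $\Whit(\Pi_f, \psi_f) = \varinjlim_{K_f} \Whit(\Pi_f, \psi_f)^{K_f}$, into the desired $E$-structure.

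The main obstacle, in my view, is the descent step: although ${\rm Aut}(\C/E)$ is not itself profinite, one must verify that on each $K_f$-fixed subspace the semilinear action really factors through a finite quotient where Hilbert~90 applies, and some care is needed to keep the factorizations compatible as $K_f$ varies. Admissibility of $\Pi_f$ is essential here. By contrast, the Whittaker transformation calculation in the first part, which might look like the substance of the lemma, collapses to a one-line manipulation once the role of the cyclotomic character is made explicit.
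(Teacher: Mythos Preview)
Your treatment of the first assertion is correct and cleanly explained: the identity $\sigma(\psi_f(t_\sigma^{-1} x)) = \psi_f(x)$, via the cyclotomic character, is exactly the point, and $\GL_2$-equivariance together with uniqueness of Whittaker models gives the rest.

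The descent argument for the $E$-structure, however, has a real gap at the place you yourself flag as the main obstacle. The claim that the semilinear ${\rm Aut}(\C/E)$-action on $\Whit(\Pi_f)^{K_f}$ ``factors through a finite Galois quotient'' cannot be justified abstractly from admissibility alone. A semilinear action never literally factors through a finite quotient (the action on scalars does not); presumably you mean the associated $1$-cocycle does, relative to some basis---but relative to an arbitrary $\C$-basis the cocycle takes values in $\GL_m(\C)$ with no reason to land in $\GL_m$ of a number field or to be locally constant. Concretely: if $W_0$ is any nonzero invariant vector, then $\pi W_0$ (with $\pi = 3.14159\ldots$) has stabilizer $\{\sigma \in {\rm Aut}(\C/E): \sigma(\pi)=\pi\}$, which contains no ${\rm Aut}(\C/E')$ for $E'/E$ finite. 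So the action is not smooth in the sense your Speiser/Hilbert~90 invocation requires, and the surjectivity of $\Whit(\Pi_f)_E \otimes_E \C \to \Whit(\Pi_f)$ is left unproved.

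The paper takes a different, more concrete route: rather than abstract descent, it exhibits an explicit ${\rm Aut}(\C/\Q(\Pi))$-fixed vector, namely the normalized new vector $W^\circ = \otimes_\p W^\circ_\p$, by checking ${}^\sigma W^\circ_{\Pi_\p} = W^\circ_{{}^\sigma\Pi_\p}$ place by place (this is carried out in Proposition~\ref{prop:new-rational}, which the paper's proof forward-references). Since $W^\circ$ generates $\Whit(\Pi_f)$ as a $\GL_2(\A_{F,f})$-module and the $\sigma$-action is $\GL_2$-equivariant, the $E$-span of the orbit $\{g\cdot W^\circ : g \in \GL_2(\A_{F,f})\}$ is visibly contained in the invariants and spans $\Whit(\Pi_f)$ over $\C$. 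This supplies exactly the nonzero invariant generator that your abstract argument needs but does not produce.
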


\begin{proof}
See Raghuram-Shahidi \cite[Lemma 3.2]{raghuram-shahidi-imrn}; it amounts to saying that a normalized new-vector generates the $E$-structure obtained by taking invariants under ${\rm Aut}({\mathbb C}/E)$. (It helps to keep Waldspruger's \cite[Lemme I.1]{waldspurger} in mind.) 
Later, we will work with some carefully normalized new-vectors; see \ref{sec:normalized-newvector} below. 
\end{proof}

As a notational convenience, when we talk of Whittaker
models, we will henceforth suppress the additive character $\psi$, since
that has been fixed once and for all; for example, $\Whit(\Pi_f)$ will denote 
$\Whit(\Pi_f,\psi_f)$. Next, $\Whit(\Pi_f)_0$ will denote the $\Q(\Pi)$-rational structure on $\Whit(\Pi_f)$.

\subsubsection{Rational structures on cohomological representations}
Let $\mu \in X^+_0(T)$ and $\Pi \in {\rm Coh}(G,\mu^{\sf v})$. For any character $\epsilon$ of $\pi_0(G_{\infty})$, 
the cohomology space $H^n(\mathfrak{g}_{\infty},K_{\infty}^0; V_{\Pi}\otimes E_{\mu}^{\sf v})(\epsilon)$, which as 
a representation of the group $\pi_0(G_{\infty}) \times G(\A_{F,f})$ is isomorphic to $\epsilon \otimes \Pi_f$, has a natural $\Q(\Pi)$-structure which may be seen as follows. Consider the following diagram:
$$
\begin{array}{clcll}
H^n(\mathfrak{g}_{\infty},K_{\infty}^0; V_{\Pi}\otimes E_{\mu}^{\sf v})(\epsilon) 
& \simeq & \epsilon \otimes \Pi_f
& & \\
\downarrow
& & \downarrow & & \\
H^n(\mathfrak{g}_{\infty} ,K_{\infty}^0; 
\mathcal{A}_{\rm cusp}(G(F) \backslash G({\mathbb A})) 
\otimes E_{\mu}^{\sf v})
& \simeq & H^n_{\rm cusp}(S^G, \E_{\mu}^{\sf v})
& & \\
\downarrow & & \downarrow & & \\
H^n(\mathfrak{g}_{\infty},K_{\infty}^0; C^{\infty}(G(F) \backslash G({\mathbb A_F})) 
\otimes E_{\mu}^{\sf v})
 & \simeq &
H^n_{dR}(S^G, \E_{\mu}^{\sf v}) 
 & \simeq &
H^n_{B}(S^G, \E_{\mu}^{\sf v}) 
\end{array}
$$
where all the vertical arrows are injections induced by inclusions. Indeed, the rational structures on all
the above spaces come from a rational structure on the Betti cohomology space on which
it is easy to describe an action of ${\rm Aut}({\mathbb C})$--see Clozel \cite{clozel}. The point is that 
cuspidal cohomology admits a $\Q(\mu)$-structure which it inherits from `the' canonical $\Q(\mu)$-structure on Betti cohomology 
$H^n_{B}(S^G, \E_{\mu}^{\sf v})$. (By $\Q(\mu)$ we mean the subfield of $\C$ fixed by $\{\sigma : {}^{\sigma}\mu = \mu\}$, where the 
action of $\sigma$ on $\mu$, or any quantity indexed by the infinite places $S_{\infty}$, is via permuting these places, 
exactly as the action of ${\rm Aut}({\mathbb C})$ on $\Pi_{\infty}$ described in Theorem~\ref{thm:q-pi}.) 
Since $\epsilon \otimes \Pi_f \simeq H^n(\mathfrak{g}_{\infty},K_{\infty}^0; V_{\Pi}\otimes E_{\mu}^{\sf v})(\epsilon)$ is 
a Hecke eigenspace (i.e., is an irreducible subspace for the action of $\pi_0(G_{\infty}) \times G(\A_{F,f})$) of cuspidal cohomology,  
it follows that this eigenspace admits a $\Q(\Pi)$-rational structure.

\subsubsection{Comparing Whittaker models and cohomological representations}
\label{sec:comparison}
We have the following comparison isomorphism $\mathcal{F}^{\epsilon}_{\Pi}$, which is the composition of three isomorphisms: 
\begin{eqnarray*}
\Whit(\Pi_f) & \longrightarrow & 
\Whit(\Pi_f) \otimes 
H^n(\mathfrak{g}_{\infty},K_{\infty}^0; \Whit(\Pi_{\infty}) \otimes E_{\mu}^{\sf v})(\epsilon) \\
& \longrightarrow & 
H^n(\mathfrak{g}_{\infty},K_{\infty}^0; \Whit(\Pi) \otimes E_{\mu}^{\sf v})(\epsilon) \\
& \longrightarrow & 
H^n(\mathfrak{g}_{\infty},K_{\infty}^0; V_{\Pi} \otimes E_{\mu}^{\sf v})(\epsilon),
\end{eqnarray*}
where the first map is $W_f \mapsto W_f \otimes [\Pi_{\infty}]^{\epsilon}$ for all $W_f \in \Whit(\Pi_f)$ with 
$[\Pi_{\infty}]^{\epsilon}$ being the generator (as in \ref{sec:pinning-down}) of the one-dimensional space 
$H^n(\mathfrak{g}_{\infty},K_{\infty}^0; \Whit(\Pi_{\infty}) \otimes E_{\mu}^{\sf v})(\epsilon)$; 
the second map is the obvious one; and the third map is the map induced in cohomology 
by the inverse of the map which gives the Fourier coefficient of a cusp form in $V_{\Pi}$--the space
of functions in $\mathcal{A}_{{\rm cusp}}(G(F) \backslash G({\mathbb A}))$ which 
realizes $\Pi$.

\subsubsection{Definition of the periods}
The isomorphism $\mathcal{F}^{\epsilon}_{\Pi}$ need not preserve rational structures on either side. Each side is an irreducible representation space for the action of $\pi_0(G_{\infty}) \times G(\A_{F,f})$ and rational structures being unique up to homotheties 
(by Waldspurger \cite[Lemme I.1]{waldspurger}), we see that we can adjust the isomorphism $\mathcal{F}^{\epsilon}_{\Pi}$ by a scalar--which is the period--such that the adjusted map preserves rational structures. Let us state this more precisely: 

Let $\Pi = \Pi_f \otimes \Pi_{\infty}$ be a regular algebraic cuspidal automorphic representation of 
${\rm GL}_2({\mathbb A}_F)$. Let $\mu \in X^+_0(T)$ be such 
that $\Pi \in {\rm Coh}(G, \mu^{\vee})$. Let $\epsilon$ be a character of 
$K_{\infty}/K_{\infty}^0$. Let $[\Pi_{\infty}]^{\epsilon}$ be a generator of the one dimensional vector space 
$H^n(\mathfrak{g}_{\infty},K_{\infty}^0, \Pi_{\infty} \otimes E_{\mu}^{\sf v})(\epsilon)$. 
To such a datum $(\Pi_f, \epsilon, [\Pi_{\infty}]^{\epsilon})$, there is a nonzero complex number
$p^{\epsilon}(\Pi)$, such that the normalized map
$$
\mathcal{F}^{\epsilon}_{\Pi,0}:= 
p^{\epsilon}(\Pi)^{-1}
\mathcal{F}^{\epsilon}_{\Pi}
$$
is ${\rm Aut}({\mathbb C})$-equivariant, i.e., the following diagram commutes:
$$
\xymatrix{
\Whit(\Pi_f) \ar[rrr]^-{\mathcal{F}^{\epsilon}_{\Pi,0}} \ar[d]_{\sigma} & & &
H^n(\mathfrak{g}_{\infty},K_{\infty}^0; V_{\Pi}\otimes E_{\mu}^{\sf v})(\epsilon)
\ar[d]^{\sigma} \\
W({}^{\sigma}\Pi_f) 
\ar[rrr]^-{\mathcal{F}^{{}^{\sigma}\!\epsilon}_{{}^{\sigma}\Pi,0}} 
& & &
H^n(\mathfrak{g}_{\infty},K_{\infty}^0; V_{{}^{\sigma}\Pi}\otimes E_{{}^{\sigma}\!\mu}^{\sf v})
({}^{\sigma}\!\epsilon)
}
$$
The complex number $p^{\epsilon}(\Pi)$, called a period, is well-defined only up to multiplication by elements of ${\mathbb Q}(\Pi)^*$. 
If we change $p^{\epsilon}(\Pi)$
to $\alpha p^{\epsilon}(\Pi)$ with a $\alpha \in {\mathbb Q}(\Pi)^*$ then the period 
$p^{{}^{\sigma}\!\epsilon}({}^{\sigma}\Pi)$ changes to 
${\sigma}(\alpha)p^{{}^{\sigma}\!\epsilon}({}^{\sigma}\Pi)$. 

In terms of the un-normalized maps, we can describe the above commutative diagram by 
\begin{equation}
\label{eqn:unnormalized}
\sigma \circ \mathcal{F}_{\Pi}^{\epsilon} = 
\left(\frac{\sigma(p^{\epsilon}(\Pi))}
{p^{{}^{\sigma}\!\epsilon}({}^{\sigma}\Pi)} \right) 
\mathcal{F}_{{}^{\sigma}\!\Pi}^{{}^{\sigma}\!\epsilon} \circ \sigma.
\end{equation}

\subsubsection{Period relations} 
\label{sec:period-relations}
The following is the main result proved in Raghuram-Shahidi \cite{raghuram-shahidi-imrn}, but stated below for our context of $\GL_2$ over a totally real $F$. 
Let $\mu \in X^+_0(T)$ be such 
that $\Pi \in {\rm Coh}(G, \mu^{\vee})$.
Let $\epsilon$ be a character of 
$K_{\infty}/K_{\infty}^0$.  Let $\xi$ be an algebraic Hecke character of $F$ with signature $\epsilon_{\xi}$ which is defined as follows: 
any such $\xi$ is of the form $\xi = |\ |^m \otimes \xi^0$ for an integer $m$, and a finite order character $\xi^0$, then 
$$
\epsilon_{\xi} = (-1)^m(\xi^0_{\eta_1}(-1), \dots, \xi^0_{\eta_n}(-1)).
$$
For any $\sigma \in {\rm Aut}({\mathbb C})$ we have 
$$
\sigma\left(\frac
{p^{\epsilon \cdot \epsilon_{\xi}}(\Pi \otimes \xi)}
{\mathcal{G}(\xi) \, p^{\epsilon}(\Pi) }\right) 
=
\left(\frac
{p^{{}^{\sigma}\!\epsilon  \cdot \epsilon_{{}^{\sigma}\!\xi}}( {}^{\sigma}\Pi  \otimes {}^{\sigma}\xi )}
{\mathcal{G}({}^{\sigma}\xi) \,p^{{}^{\sigma}\!\epsilon}({}^{\sigma}\Pi)}\right).
$$
The action of ${\rm Aut}({\mathbb C})$ on $\epsilon$  is via permuting the infinite places.
Define $\Q(\xi)$ as the field obtained by adjoining the values of $\xi^0$, and let 
${\mathbb Q}(\Pi, \xi)$ be the compositum of the number fields
${\mathbb Q}(\Pi)$ and ${\mathbb Q}(\xi)$. We have
$$
p^{\epsilon \cdot \epsilon_{\xi}}(\Pi \otimes \xi)
 \, \sim_{{\mathbb Q}(\Pi, \xi)} \,
\mathcal{G}(\xi) \, p^{\epsilon}(\Pi).
$$
By $\sim_{{\mathbb Q}(\Pi, \xi)}$ we mean up to an element of ${\mathbb Q}(\Pi, \xi)$.

\subsection{Proof of Theorem~\ref{thm:central}}

\subsubsection{Normalized new vectors}
\label{sec:normalized-newvector}
We now show that local new-vectors when normalized appropriately give a very explicit element in the rational structure $\Whit(\Pi_f)_0$ of the 
(finite part of the) global Whittaker model. 

Recall from \ref{sec:additive-character} our choice of additive character $\psi$. Pick an element ${\sf d}_F \in \ringO_F$ such that 
${\rm ord }_{\p}({\sf d}_F) = r_{\p} = {\rm ord}(\mathfrak{D}_F)$; this is possible by strong approximation. 
Now define a character $\psi'$ by $\psi'(x) = \psi({\sf d}_F^{-1}x)$; then it is trivially checked that $\psi'_{\p}$ has conductor $\ringO_{\p}$ for all 
prime ideals $\p$. We have a map $\Whit(\Pi_f, \psi'_f) \to \Whit(\Pi_f, \psi_f)$, given by $W_f' \mapsto W_f$ where 
$$
W_f(g) = W_f'\left(\left(\begin{array}{cc} {\sf d}_F & 0 \\  0 & 1 \end{array}\right)g\right).
$$
This also makes sense locally: $W_{\p}(g_{\p}) = W_{\p}'\left(\left(\begin{smallmatrix} {\sf d}_{\p} & 0 \\  0 & 1 \end{smallmatrix}\right)g_{\p}\right)$, where,  
by ${\sf d}_{\p}$, we mean ${\sf d}_F$ as an element of $F_{\p}$.

A Whittaker vector $W'_{\p}$ is completely determined by the function on $F_{\p}^*$ 
$$
x_{\p} \mapsto \phi_{\p}'(x_{\p}) := W'_{\p}\left(\left(\begin{array}{cc} x_{\p} & 0 \\  0 & 1 \end{array}\right)\right), 
$$
i.e., the map $W_{\p}' \mapsto \phi_{\p}'$ is injective. (See, for example, Godement \cite[Lemma 3 on p.1.5]{godement}.) 
The set of all such functions $\kappa_{\p}'$ is the Kirillov model $\mathcal{K}(\Pi_{\p},\psi'_{\p})$. 
Working in the Kirillov model $\mathcal{K}(\Pi_{\p}, \psi'_{\p})$, we have the following explicit formulae for new-vectors taken from 
Schmidt \cite[p.141]{schmidt}. For each representation $\Pi_{\p}$ we have a very special vector $\kappa_{\p}^{\rm new} \in \mathcal{K}(\Pi_{\p},\psi'_{\p})$
that is the local new-vector in that model. Since the table consists of purely local information, we will abuse our notation by dropping the subscript $\p$. 

\begin{enumerate}
\item Principal series representation $\pi(\chi_1, \chi_2)$, with $\chi_1,\chi_2$ unramified, and $\chi_1\chi_2^{-1} \neq |\ |^{\pm 1}$. Then 
$$
\kappa_{\p}^{\rm new}(x) = |x|^{1/2}\left(\sum_{k+l = v(x)} \chi_1(\varpi^k)\chi_2(\varpi^l)\right)
{\bf 1}_{\ringO}(x). 
$$

\item Principal series representation $\pi(\chi_1, \chi_2)$ with exactly one of the characters being unramified; 
say $\chi_1$ unramified and $\chi_2$ ramified. Then 
$$
\kappa_{\p}^{\rm new}(x) =
|x|^{1/2} \chi_1(x) {\bf 1}_{\ringO}(x).
$$

\item Unramified twist of the Steinberg representation: ${\rm St} \otimes \chi$ with $\chi$ unramified. Then 
$$
\kappa_{\p}^{\rm new}(x) = |x| \chi(x) {\bf 1}_{\ringO}(x).
$$

\item In all other cases (principal series $\pi(\chi_1, \chi_2)$ with both $\chi_1, \chi_2$ ramified; ramified twist of the Steinberg representation; any supercuspidal representation) we have 
$$
\kappa_{\p}^{\rm new}(x) =
{\bf 1}_{\ringO^{\times}}(x).
$$
\end{enumerate}

Let $W_{\p}^{\rm new} \in \Whit(\Pi_{\p}, \psi'_{\p})$ correspond to $\kappa_{\p}^{\rm new}$, and finally we let $W_{\p}^{\circ} \in \Whit(\Pi_{\p}, \psi_{\p})$ correspond to 
$W_{\p}^{\rm new}$. That is we have: 
\begin{equation}
\label{eqn:newvector}
W_{\p}^{\circ} \ \leftrightarrow \ W_{\p}^{\rm new}  \ \leftrightarrow \ \kappa_{\p}^{\rm new} .
\end{equation}
We will also denote $W_{\p}^{\circ}$ by $W_{\Pi_{\p}}^{\circ}$, and observe that 
$$
W_{\Pi_{\p}}^{\circ}\left(\left(\begin{array}{cc} x_{\p} & 0 \\  0 & 1 \end{array}\right)\right) = 
W_{\Pi_{\p}}^{\rm new}\left(\left(\begin{array}{cc} {\sf d}_{\p}x_{\p} & 0 \\  0 & 1 \end{array}\right)\right) = 
\kappa_{\p}^{\rm new}({\sf d}_{\p}x_{\p}). 
$$

\begin{prop}
\label{prop:new-rational}
Let $\Pi$ be a cuspidal automorphic representation of $\GL_2(\A_F)$. For each prime ideal $\p$, let $W_{\Pi_{\p}}^{\circ}$ be the normalized new-vector 
as defined in (\ref{eqn:newvector}) of the representation $\Pi_{\p}$ which is realized in its Whittaker model $\Whit(\Pi_{\p}, \psi_{\p}).$
For any $\sigma \in  {\rm Aut}(\C)$ we have 
$$
{}^{\sigma}W_{\Pi_{\p}}^{\circ} = W_{{}^{\sigma}\!\Pi_{\p}}^{\circ}.
$$
Let $W^{\circ} = \otimes_{\p} W_{\Pi_{\p}}^{\circ} \in \Whit(\Pi_f,\psi_f).$
Then $W^\circ$ is fixed by $ {\rm Aut}(\C/\Q(\Pi))$, and hence $W^\circ  \in \Whit(\Pi_f,\psi_f)_0$.
\end{prop}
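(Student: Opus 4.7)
The plan is to prove the local assertion ${}^{\sigma}W_{\Pi_{\p}}^{\circ} = W_{{}^{\sigma}\Pi_{\p}}^{\circ}$ at each prime $\p$, and then deduce the global conclusion as a routine consequence. Since a Whittaker function on $\GL_2(F_{\p})$ is determined by its values on the mirabolic torus via the Kirillov model, it suffices to check equality after restriction to matrices of the form $\left(\begin{smallmatrix} x & 0 \\ 0 & 1\end{smallmatrix}\right)$. Unwinding the definition of the $\sigma$-action and of $W_{\Pi_{\p}}^{\circ}$ gives
\[
{}^{\sigma}W_{\Pi_{\p}}^{\circ}\!\left(\begin{pmatrix}x & 0 \\ 0 & 1\end{pmatrix}\right) \;=\; \sigma\bigl(\kappa_{\Pi_{\p}}^{\rm new}({\sf d}_{\p}\, t_{\sigma,\p}^{-1}\, x)\bigr),
\]
and we want this to equal $\kappa_{{}^{\sigma}\Pi_{\p}}^{\rm new}({\sf d}_{\p} x)$.

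The first simplification I would make is to observe that $t_{\sigma,\p} \in \ringO_{\p}^{\times}$ by construction. Inspecting Schmidt's four formulas, in each case $\kappa_{\Pi_{\p}}^{\rm new}(y)$ depends only on ${\rm ord}_{\p}(y)$ (cases 1, 2, 3) or only on whether $y \in \ringO_{\p}^{\times}$ (case 4); hence multiplication of the argument by the unit $t_{\sigma,\p}^{-1}$ has no effect and
\[
\kappa_{\Pi_{\p}}^{\rm new}({\sf d}_{\p}\, t_{\sigma,\p}^{-1}\, x) \;=\; \kappa_{\Pi_{\p}}^{\rm new}({\sf d}_{\p} x).
\]
The task thus reduces to the purely local identity $\sigma\bigl(\kappa_{\Pi_{\p}}^{\rm new}(y)\bigr) = \kappa_{{}^{\sigma}\Pi_{\p}}^{\rm new}(y)$. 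Case 4 is immediate since $\kappa^{\rm new} = {\bf 1}_{\ringO^{\times}}$ has values $0, 1 \in \Q$. In cases 1--3 I would re-express the formula, absorbing the factor $|y|^{1/2}$ (respectively $|y|$) into the unramified characters, i.e.\ using the ``arithmetic'' parameters $\tilde{\chi}_i(\varpi) := q^{-1/2}\chi_i(\varpi)$; the resulting formulas depend only on these parameters evaluated on powers of $\varpi$, and the defining prescription for ${}^{\sigma}\Pi_{\p}$ (Theorem~\ref{thm:q-pi}) sends each such parameter to its $\sigma$-image, yielding the identity.

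Granted the local statement, the global conclusion is immediate: $W^{\circ} = \otimes_{\p} W_{\Pi_{\p}}^{\circ}$, and at almost every $\p$ we are in case 1 with ${}^{\sigma}\Pi_{\p} = \Pi_{\p}$ (these are the unramified places with $\sigma$ acting through its restriction to the fixed number field of Satake parameters), so $\sigma$ fixes the spherical new vector. For $\sigma \in {\rm Aut}(\C/\Q(\Pi))$, strong multiplicity one combined with the definition of $\Q(\Pi)$ gives ${}^{\sigma}\Pi_{\p} = \Pi_{\p}$ at every finite place; applying the local result place-by-place yields ${}^{\sigma}W^{\circ} = W^{\circ}$, which is exactly the assertion $W^{\circ} \in \Whit(\Pi_f,\psi_f)_0$.

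The main obstacle I anticipate is the bookkeeping around the $|y|^{1/2}$ factors in cases~1 and~2 of Schmidt's list: one has to verify that the re-expression in terms of arithmetic Satake parameters is the correct one for ${}^{\sigma}\Pi_{\p}$, i.e.\ that $\sigma$ acting on these parameters is consistent with the abstract definition of ${}^{\sigma}\Pi_{\p}$ via a $\sigma$-linear intertwiner. This is a compatibility check rather than a genuine difficulty, but it is where the care has to be exercised; after this, the remaining steps are formal.
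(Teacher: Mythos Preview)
Your proposal is correct and follows essentially the same approach as the paper's proof: reduce to the Kirillov model, use $t_{\sigma,\p}\in\ringO_{\p}^{\times}$ to drop it from the argument, and then verify the identity case-by-case via Schmidt's table, the key input being that ${}^{\sigma}\pi(\chi_{1},\chi_{2})=\pi({}^{\sigma'}\!\chi_{1},{}^{\sigma'}\!\chi_{2})$ with ${}^{\sigma'}\!\chi(x)=|x|^{-1/2}\sigma(\chi(x)|x|^{1/2})$ (your ``arithmetic parameters'' $\tilde{\chi}_i$ are exactly this twisted action). The paper cites Waldspurger for this compatibility and writes out the unramified principal series case explicitly; your anticipation that this bookkeeping is the only genuine point of care is accurate.
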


\begin{proof}
To see ${}^{\sigma}W_{\Pi_{\p}}^{\circ} = W_{{}^{\sigma}\!\Pi_{\p}}^{\circ}$, it suffices to check that both Whittaker vectors give the same vector in the Kirillov model; which is then verified using a case-by-case analysis using the above table. 

Suppose $\Pi_{\p}$ is an unramified principal series representation, and say, 
$\Pi_{\p} = \pi(\chi_{1,\p}, \chi_{2,\p})$ for characters $\chi_{j,\p} : F_{\p}^* \to \C^*$. Let us describe ${}^{\sigma}\Pi_{\p}$. For this, given any 
character $\chi$ of $F^*$, and any $\sigma\in  {\rm Aut}(\C)$, define ${}^{\sigma}\chi$ as $\sigma \circ \chi$, i.e., 
${}^{\sigma}\chi(x) = \sigma(\chi(x))$. Define a twisted action of $\sigma\in  {\rm Aut}(\C)$ on characters by: 
$$
{}^{\sigma'}\!\chi(x) = |x|^{-1/2}\sigma(\chi(x)|x|^{1/2}).
$$
As is checked in Waldspurger \cite[I.2]{waldspurger}, we have 
$$
{}^{\sigma}\pi(\chi_{1,\p}, \chi_{2,\p}) = \pi({}^{\sigma'}\!\chi_{1,\p}, {}^{\sigma'}\!\chi_{2,\p}).
$$
On the one hand, using the formula for $\kappa_{\p}^{\rm new}$ for $\Pi_{\p}  = \pi(\chi_{1,\p}, \chi_{2,\p})$ we have: 
\begin{eqnarray*}
{}^{\sigma}W_{\Pi_{\p}}^{\circ}\left(\left(\begin{array}{cc} x_{\p} & 0 \\  0 & 1 \end{array}\right) \right) 
& = & \sigma\left(W_{\Pi_{\p}}^{\circ}\left(\left(\begin{array}{cc} t_{\sigma,\p}^{-1}x_{\p} & 0 \\  0 & 1 \end{array}\right)\right)\right)  \\
& = & \sigma\left(W_{\Pi_{\p}}^{\rm new}\left(\left(\begin{array}{cc} {\sf d}_{\p}t_{\sigma,\p}^{-1}x_{\p} & 0 \\  0 & 1 \end{array}\right)\right)\right)  \\
& = & \sigma\left(|{\sf d}_{\p}t_{\sigma,\p}^{-1}x_{\p}|^{1/2}\left(\sum_{k+l = r_{\p}+v(x)} \chi_{1,\p}(\varpi^k)\chi_{2,\p}(\varpi^l)\right)
{\bf 1}_{\ringO_{\p}}({\sf d}_{\p}t_{\sigma,\p}^{-1}x_{\p})\right) \\
& = & \sigma(|{\sf d}_{\p}x_{\p}|^{1/2}) 
\left(\sum_{k+l = r_{\p}+v(x)} \sigma\left(\chi_{1,\p}(\varpi^k)\chi_{2,\p}(\varpi^l) \right) \right)
{\bf 1}_{\ringO_{\p}}({\sf d}_{\p}x_{\p})
\end{eqnarray*}
since $t_{\sigma,\p} \in \ringO_{\p}^{\times}$. 
On the other hand, using the same formula for $\kappa_{\p}^{\rm new}$, but now for the representation
${}^{\sigma}\Pi_{\p}  = \pi({}^{\sigma'}\!\chi_{1,\p}, {}^{\sigma'}\!\chi_{2,\p})$ we have:
\begin{eqnarray*}
W_{{}^{\sigma}\Pi_{\p}}^{\circ}\left(\left(\begin{array}{cc} x_{\p} & 0 \\  0 & 1 \end{array}\right) \right) 
& = & W_{{}^{\sigma}\Pi_{\p}}^{\rm new}\left(\left(\begin{array}{cc} {\sf d}_{\p}x_{\p} & 0 \\  0 & 1 \end{array}\right) \right) \\
& = & |{\sf d}_{\p}x_{\p}|^{1/2} \left(\sum_{k+l = r_{\p}+v(x)} {}^{\sigma'}\!\chi_{1,\p}(\varpi^k) {}^{\sigma'}\!\chi_{2,\p}(\varpi^l)\right)
{\bf 1}_{\ringO_{\p}}({\sf d}_{\p}x_{\p}) \\
& = & |{\sf d}_{\p}x_{\p}|^{1/2} |\varpi_{\p}|^{-\, \frac{r+v(x)}{2}}
\left(\sum_{k+l = r_{\p}+v(x)} \sigma \left(\chi_{1,\p}(\varpi^k) \chi_{2,\p}(\varpi^l) |\varpi_{\p}|^{\frac{r+v(x)}{2}} \right) \right)
{\bf 1}_{\ringO_{\p}}({\sf d}_{\p}x_{\p}) 
\end{eqnarray*}
Using $|\varpi_{\p}|^{\frac{r+v(x)}{2}} =  |{\sf d}_{\p}x_{\p}|^{1/2}$ the final expression also simplifies  to 
$$
\sigma(|{\sf d}_{\p}x_{\p}|^{1/2}) 
\left(\sum_{k+l = r_{\p}+v(x)} \sigma\left(\chi_{1,\p}(\varpi^k)\chi_{2,\p}(\varpi^l) \right) \right)
{\bf 1}_{\ringO_{\p}}({\sf d}_{\p}x_{\p}).
$$
This concludes the proof in the case of an unramified principal series representation. In all the other cases, the above calculation is much simpler. Let us note that in the case of the Steinberg representation one has ${}^{\sigma}({\rm St} \otimes \chi) = {\rm St} \otimes {}^{\sigma}\!\chi$. We omit further details. 
\end{proof}

\subsubsection{The global integral}
\label{sec:global-integral}
Let $\Pi$ be a cuspidal automorphic representation as in Theorem~\ref{thm:central}. Piece together all the normalized Whittaker vectors
$W_{\p}^{\circ}$ in \ref{sec:normalized-newvector} and let $W^{\circ} = \otimes_{\p} W_{\p}^{\circ}.$ 
For each infinite place $\eta_j$ pick any Whittaker vector $W_j \in \Whit(\Pi_{\eta_j}, \psi_{\eta_j})$, and put 
$W_{\infty} = \otimes_{j=1}^n W_j$. Now put
$$
W = W_{\infty} \otimes W^{\circ} \in \Whit(\Pi_{\infty}) \otimes \Whit(\Pi_f) = \Whit(\Pi).
$$
Let $\phi \in V_{\Pi}$ be the cusp form that corresponds to $W$ under the isomorphism $V_{\Pi} \to \Whit(\Pi)$ of taking the $\psi$-Fourier coefficient.  
For any place $v$, and any $W_v \in \Whit(\Pi_v)$, define the zeta-integral
$$
\zeta_{v}(s, W_{v}) = \int_{x \in F_v^*} 
W_{v}\left(\left(\begin{array}{cc} x & 0 \\ 0 & 1 \end{array}\right)\right) |x|^{s-\frac12} dx, \ \ \Re(s) \gg 0. 
$$
Hecke theory for $\GL_2$ (see Gelbart \cite[Section 6]{gelbart}) says that these integrals have a meromorphic continuation to all of $\C$. 
The assumption that $s=1/2$ is critical for $L(s,\Pi)$ says that $\zeta_{\eta}(\frac12, W_{\eta})$ is finite for every infinite place $\eta$. Lastly, let 
$\zeta_{\infty}(s, W_{\infty}) = \prod_{\eta \in S_{\infty}} \zeta_{\eta}(s, W_{\eta}).$

\begin{prop}
\label{prop:global-integral}
With the notations as above, 
$$
\int_{F^{\times}\backslash \A_F^{\times}} \phi\left(\left(\begin{array}{cc} x & 0 \\ 0 & 1 \end{array}\right)\right) dx 
= 
\zeta_{\infty}(1/2, W_{\infty}) L_f(1/2, \Pi).
$$
\end{prop}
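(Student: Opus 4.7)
The plan is to carry out the classical Hecke--Jacquet--Langlands unfolding adapted to the normalizations of this paper. First, starting from the definition of $W = W_\phi$ in Theorem~\ref{whittaker}, Fourier inversion on the compact quotient $F\backslash\A_F$ applied to $u\mapsto\phi\!\left(\bigl(\begin{smallmatrix}1 & u\\ 0 & 1\end{smallmatrix}\bigr)\bigl(\begin{smallmatrix}x & 0\\ 0 & 1\end{smallmatrix}\bigr)\right)$, combined with the vanishing of the constant term of $\phi$ (by cuspidality), yields the expansion
\[
\phi\!\begin{pmatrix} x & 0 \\ 0 & 1 \end{pmatrix}
\;=\;\sum_{\alpha\in F^\times} W\!\begin{pmatrix} \alpha x & 0 \\ 0 & 1 \end{pmatrix}, \qquad x\in\A_F^\times.
\]

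Substituting this into the left-hand side of the proposition and unfolding the $F^\times$-sum against the quotient $F^\times\backslash\A_F^\times$ via $x\mapsto \alpha^{-1}x$ collapses the expression to $\int_{\A_F^\times}W(\mathrm{diag}(x,1))\,dx$. The factorization $W = W_\infty\otimes\bigotimes_\p W^{\circ}_\p$ and the product Haar measure on $\A_F^\times$ split this as $\zeta_\infty(1/2,W_\infty)\cdot\prod_\p \zeta_\p(1/2,W^{\circ}_\p)$, using that the factor $|x|^{s-1/2}$ is trivial at $s=1/2$. It then suffices to show the finite Euler product equals $L_f(1/2,\Pi)$. From $W^{\circ}_\p(g)=W^{\mathrm{new}}_\p(\mathrm{diag}({\sf d}_\p,1)g)$ and translation-invariance of the multiplicative Haar measure, a substitution gives
\[
\zeta_\p(s,W^{\circ}_\p) \;=\; |{\sf d}_\p|^{\,1/2-s}\int_{F_\p^\times}\kappa^{\mathrm{new}}_\p(y)\,|y|^{s-1/2}\,dy.
\]
Plugging in each of the four explicit Kirillov-model formulae for $\kappa^{\mathrm{new}}_\p$ recorded in \ref{sec:normalized-newvector} and summing the resulting geometric series case by case, one verifies that the integral on the right equals the local $L$-factor $L(s,\Pi_\p)$ in every case (unramified principal series, principal series with one ramified character, unramified Steinberg twist, and all remaining ramified cases). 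Hence $\zeta_\p(s,W^{\circ}_\p)=|{\sf d}_\p|^{1/2-s}L(s,\Pi_\p)$; at the central point $s=1/2$ the discriminant factor is identically $1$, and taking the product over all finite $\p$ yields $L_f(1/2,\Pi)$.

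The conceptual steps are entirely routine; the only subtle point is the interplay between the character $\psi_\p$ (with conductor $\p^{-r_\p}$) used to define $W^{\circ}_\p$ and the character $\psi'_\p$ (with conductor $\ringO_\p$) with respect to which the new-vector formulae in \ref{sec:normalized-newvector} are stated. The compensating shift by $\mathrm{diag}({\sf d}_\p,1)$ contributes the factor $|{\sf d}_\p|^{1/2-s}$, and it is a happy feature of the central point $s=1/2$ that this factor drops out, which is the structural reason the proposition admits such a clean statement at the center.
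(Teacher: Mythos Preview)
Your computation has all the right ingredients, and indeed matches the paper's proof almost line for line, but the order in which you carry out the steps creates a genuine convergence gap. You unfold directly at the center $s=1/2$: after writing $\phi(\mathrm{diag}(x,1))=\sum_{\alpha\in F^\times}W(\mathrm{diag}(\alpha x,1))$ you collapse the sum against the quotient to obtain $\int_{\A_F^\times}W(\mathrm{diag}(x,1))\,dx$, and then factor this as an Euler product. Neither step is justified at $s=1/2$. The unfolded integral over $\A_F^\times$ need not converge there (this is exactly why the paper writes ``converges for $\Re(s)\gg 0$''), and even if each local factor $\zeta_\p(1/2,W^\circ_\p)=L_\p(1/2,\Pi_\p)$ makes sense, the infinite product $\prod_\p L_\p(1/2,\Pi_\p)$ is only a conditionally meaningful object---$L_f(1/2,\Pi)$ is defined by analytic continuation from $\Re(s)>1$, not by a convergent Euler product at the center. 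The hypothesis that $s=1/2$ is critical only controls finiteness of the \emph{archimedean} zeta integrals and says nothing about convergence of the finite part.

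The fix is exactly what the paper does: insert the factor $|x|^{s-1/2}$ and carry out the unfolding and Euler factorization for $\Re(s)\gg 0$, where everything converges absolutely. This yields
\[
\int_{F^\times\backslash\A_F^\times}\phi(\mathrm{diag}(x,1))\,|x|^{s-1/2}\,dx \;=\; \zeta_\infty(s,W_\infty)\,L_f(s,\Pi)\,|\mathfrak{d}_F|^{1/2-s}
\]
in that half-plane. Now the left-hand side converges absolutely for \emph{all} $s$ (rapid decay of $\phi$), and the right-hand side admits meromorphic continuation, so the identity persists; evaluating at $s=1/2$ then kills the discriminant factor and gives the proposition. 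Your observation that the discriminant factor is $|{\sf d}_\p|^{1/2-s}$ (rather than the paper's $|{\sf d}_\p|^{s-1/2}$) is correct, incidentally---your substitution is the right one---but of course this makes no difference at $s=1/2$.
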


\begin{proof}
The usual unfolding argument gives
$$
\int_{F^{\times}\backslash \A_F^{\times}} \phi\left(\left(\begin{array}{cc} x & 0 \\ 0 & 1 \end{array}\right)\right) |x|^{s-\frac12} dx  = 
\int_{\A_F^{\times}} W\left(\left(\begin{array}{cc} x & 0 \\ 0 & 1 \end{array}\right)\right) |x|^{s-\frac12} dx.
$$
The integral on the left converges absolutely everywhere (since $\phi$ has rapid decay). The integral on the right converges for $\Re(s) \gg 0$, and there it
is eulerian, and so factorizes as $\prod_v \zeta_{v}(s, W_v).$
For every prime ideal $\p$, we know that the zeta-integral of the local new vector gives the local $L$-function; more precisely, we have
$$
\zeta_{\p}(s, W_{\p}^{\circ}) = |{\sf d}_{\p}|^{s-1/2} \zeta_{\p}(s, W_{\p}^{\rm new}) = 
|{\sf d}_{\p}|^{s-1/2} L_{\p}(s, \Pi_{\p}). 
$$
We deduce that for $\Re(s) \gg 0$ we have 
$$
\int_{F^{\times}\backslash \A_F^{\times}} \phi\left(\left(\begin{array}{cc} x & 0 \\ 0 & 1 \end{array}\right)\right) |x|^{s-\frac12} dx  = 
\zeta_{\infty}(s, W_{\infty}) L_f(s, \Pi) |\mathfrak{d}_F|^{s-1/2}
$$
where $\mathfrak{d}_F$ is the absolute discriminant of $F$. However, the left hand side converges for all $s$, and the right hand side has a meromorphic continuation for all $s$, and so we can evaluate at $s=1/2$ to finish the proof of the proposition. 
\end{proof}

\subsubsection{The cohomology class $\vartheta_{\Pi}$ attached to $W_{\Pi}^{\circ}$}
Consider the map 
$$
\mathcal{F}^{+\!+}_{\Pi} : \Whit(\Pi_f) \to H^n(\mathfrak{g}_{\infty},K_{\infty}^0; V_{\Pi}\otimes E_{\mu}^{\sf v})(+\!+) 
$$
as in \ref{sec:comparison}, and let $\vartheta_{\Pi}$ be the image of $W_{\Pi}^{\circ}$ under this map, i.e., 
$$
\vartheta_{\Pi} =  \mathcal{F}^{+\!+}_{\Pi}(W_{\Pi}^{\circ}).
$$
Fix an open compact subgroup $K_f$ that leaves $W_{\Pi}^{\circ}$ invariant; 
an optimal one is related to the conductor of $\Pi$, but this will not play a role here. From \ref{sec:cuspidal-cohomology}, we have 
$$
\vartheta_{\Pi}  \in H^n_{\rm cusp}(S^G_{K_f}, \E_{\mu}^{\sf v}) \subset  H^n(S^G_{K_f}, \E_{\mu}^{\sf v}).
$$
It is a fundamental fact (see Clozel \cite{clozel}) that cuspidal cohomology injects into cohomology with compact supports, i.e., 
$$
H^n_{\rm cusp}(S^G_{K_f}, \E_{\mu}^{\sf v}) 
\hookrightarrow H^n_c(S_{K_f}^G, \E_{\mu}^{\sf v}). 
$$
Therefore 
$$
\vartheta_{\Pi}  \in H^n_c(S^G_{K_f}, \E_{\mu}^{\sf v}).
$$

Recall that map $ \mathcal{F}^{+\!+}_{\Pi}$ is a composition of three isomorphisms, and the first one maps $W_{\Pi}^{\circ}$ to 
$W_{\Pi}^{\circ} \otimes [\Pi_{\infty}]^{+\!+}$, where the class $[\Pi_{\infty}]^{+\!+}$ is given in (\ref{eqn:infinity-class}). Using an analogous notation, 
we may write the class $\vartheta_{\Pi}$ in terms of Lie algebra cocycles as
\begin{equation}
\label{eqn:vartheta}
\vartheta_{\Pi} = 
\sum_{l = (l_1,\dots,l_n)} \sum_{\alpha = (\alpha_1,\dots,\alpha_n)} {\bf z}_l^* \otimes \phi_{l, \alpha} \otimes {\sf s}_{\alpha}
\end{equation}
where $\phi_{l,\alpha} \in V_{\Pi}$ are cuspforms whose corresponding Whittaker functions in $\Whit(\Pi) = \Whit(\Pi_{\infty}) \otimes \Whit(\Pi_f)$ are
$$
\phi_{l,\alpha}  \leftrightarrow W_{l,\alpha} = W_{l,\alpha, \infty} \otimes W_{\Pi}^{\circ}. 
$$ 

For later use, let us record the action of ${\rm Aut}(\C)$ on $\vartheta_{\Pi}$ which is given by the following

\begin{prop}
\label{prop:vartheta-sigma}
$$
{}^{\sigma}\vartheta_{\Pi} =  
\frac{\sigma(p^{+\!+}(\Pi))}{p^{+\!+}({}^{\sigma}\Pi)}
\vartheta_{{}^{\sigma}\!\Pi}.
$$
\end{prop}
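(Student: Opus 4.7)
The plan is to unwind both sides using the definition of $\vartheta_\Pi$ together with the two Galois-equivariance statements already at our disposal: the defining property of the periods in \eqref{eqn:unnormalized}, and the rationality of the normalized new vector in Proposition~\ref{prop:new-rational}.

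First, I would recall that $\vartheta_\Pi = \mathcal{F}^{+\!+}_\Pi(W^\circ_\Pi)$, and analogously $\vartheta_{{}^\sigma\!\Pi} = \mathcal{F}^{+\!+}_{{}^\sigma\!\Pi}(W^\circ_{{}^\sigma\!\Pi})$. The key input is the defining equation \eqref{eqn:unnormalized} applied to $\epsilon = (+,\dots,+)$. Since $\sigma$ acts on characters of $K_\infty/K_\infty^0$ by permuting the infinite places, and the totally trivial character $+\!+$ is permutation-invariant, one has ${}^\sigma\!\epsilon = +\!+$. Thus \eqref{eqn:unnormalized} specializes to
\[
\sigma \circ \mathcal{F}^{+\!+}_\Pi \;=\; \frac{\sigma(p^{+\!+}(\Pi))}{p^{+\!+}({}^\sigma\!\Pi)}\, \mathcal{F}^{+\!+}_{{}^\sigma\!\Pi}\circ \sigma.
\]

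Next, I would evaluate both sides on $W^\circ_\Pi \in \Whit(\Pi_f)_0$. The left hand side produces ${}^\sigma\vartheta_\Pi$. For the right hand side, I invoke Proposition~\ref{prop:new-rational}, which gives $\sigma(W^\circ_\Pi) = W^\circ_{{}^\sigma\!\Pi}$ locally at every $\p$ (and hence globally for the pure tensor $W^\circ$). Substituting this identification yields
\[
{}^\sigma\vartheta_\Pi \;=\; \frac{\sigma(p^{+\!+}(\Pi))}{p^{+\!+}({}^\sigma\!\Pi)}\, \mathcal{F}^{+\!+}_{{}^\sigma\!\Pi}(W^\circ_{{}^\sigma\!\Pi}) \;=\; \frac{\sigma(p^{+\!+}(\Pi))}{p^{+\!+}({}^\sigma\!\Pi)}\, \vartheta_{{}^\sigma\!\Pi},
\]
which is the required identity.

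There is essentially no obstacle beyond bookkeeping, since the proposition is a direct consequence of two already-established facts. The only subtlety I would double-check is the triviality of the ${\rm Aut}(\C)$-action on the signature $(+,\dots,+)$; this is immediate because the action on characters of $\pi_0(G_\infty)$ factors through the permutation of the infinite places $\{\eta_1,\dots,\eta_n\}$, under which the constant tuple $(+,\dots,+)$ is fixed. The only other point to verify is that applying $\sigma$ to the pure tensor $W^\circ = \otimes_\p W^\circ_{\Pi_\p}$ commutes with the tensor product decomposition, which is built into the definition of the semi-linear ${\rm Aut}(\C)$-action on $\Whit(\Pi_f)$ recorded just before Lemma~\ref{lem:rational-whittaker}.
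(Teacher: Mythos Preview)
Your proof is correct and follows essentially the same route as the paper: apply the period relation \eqref{eqn:unnormalized} with $\epsilon=(+,\dots,+)$ to $W_\Pi^\circ$, and then invoke Proposition~\ref{prop:new-rational} to replace ${}^\sigma W_\Pi^\circ$ by $W_{{}^\sigma\Pi}^\circ$. Your additional remarks on why ${}^\sigma(+\!+)=+\!+$ and on the compatibility of $\sigma$ with the restricted tensor product are helpful clarifications that the paper leaves implicit.
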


\begin{proof}
This follows from Equation (\ref{eqn:unnormalized}) and Proposition~\ref{prop:new-rational}: 
\begin{eqnarray*}
{}^{\sigma}\vartheta_{\Pi} = \sigma (\mathcal{F}_{\Pi}^{+\!+}(W_{\Pi}^{\circ})) & = & 
\left(\frac{\sigma(p^{+\!+}(\Pi))}{p^{+\!+}({}^{\sigma}\Pi)} \right) 
\mathcal{F}_{{}^{\sigma}\!\Pi}^{+\!+} ({}^{\sigma}\! W_{\Pi}^{\circ}) \\
& = &  
\left(\frac{\sigma(p^{+\!+}(\Pi))}{p^{+\!+}({}^{\sigma}\Pi)} \right) 
\mathcal{F}_{{}^{\sigma}\!\Pi}^{+\!+}(W_{{}^{\sigma}\! \Pi}^{\circ}) = 
\frac{\sigma(p^{+\!+}(\Pi))}{p^{+\!+}({}^{\sigma}\Pi)} 
\vartheta_{{}^{\sigma}\!\Pi}.
\end{eqnarray*}
\end{proof}

\subsubsection{Pulling back to get a $\GL_1$-class $\iota^*\vartheta_{\Pi}$}
\label{sec:pullback-iota}

Let $\iota : \GL_1 \to \GL_2$ be the map $x \mapsto \left(\begin{smallmatrix}x & \\ & 1\end{smallmatrix}\right)$. 
Then $\iota$ induces a map at the level of local and global groups, and between appropriate symmetric spaces 
of $\GL_1$ and $\GL_2$, all of which will also be denoted by $\iota$ again; this should cause no confusion. 
The pullback (of a subset, a function, a differential form, or a cohomology class) via $\iota$ will be denoted $\iota^*$. A little more 
precisely, $\iota$ induces an injection: 
$$
\iota : \GL_1(F)\backslash \GL_1(\A_F)/ \iota^*K_{\infty} \iota^*K_f 
\hookrightarrow \GL_2(F)\backslash \GL_2(\A_F)/ K_{\infty}^0K_f.
$$
Note that $\iota^*\K_{\infty} = \{1\}$, and let us denote $R_f := \iota^*K_f$ which is an open compact subgroup of 
$A_{F,f}^{\times}$. The above injection will be denoted $\iota : \bar{S}^{G_1}_{R_f} \hookrightarrow S^G_{K_f}$, where 
$$
\bar{S}^{G_1}_{R_f} = F^{\times}\backslash \A_F^{\times} / R_f. 
$$
As a manifold $\bar{S}^{G_1}_{R_f}$ is an oriented $n$-dimensional manifold all of whose connected components are isomorphic to 
$\prod_{j=1}^n \R_{>0}$. (Choose the obvious orientation on each connected component.)
It is a standard fact that this inclusion $\iota : \bar{S}^{G_1}_{R_f} \hookrightarrow S^G_{K_f}$ is a proper map, and hence we can pull back 
$\vartheta_{\Pi}  \in H^n_c(S^G_{K_f}, \E_{\mu}^{\sf v})$ by $\iota$, to get 
$$
\iota^* \vartheta_{\Pi}  \in H^n_c(\bar{S}^{G_1}_{R_f}, \iota^*\E_{\mu}^{\sf v})
$$
where $\iota^*\E_{\mu}^{\sf v}$ is the sheaf on $\bar{S}^{G_1}_{R_f}$ given by the restriction of the representation $E_{\mu}^{\sf v}$ to $\GL_1$ via 
$\iota$.

\subsubsection{Criticality of $s=1/2$ and the coefficient $\mu$}
\label{sec:criticality}
We now appeal to the hypothesis that $s = 1/2$ is critical to deduce that we can work with cohomology with trivial coefficients, i.e., 
in $H^n_c(\bar{S}^{G_1}_{R_f}, \C)$. For this, let us first record all the critical points for the $L$-function at hand: 
\begin{prop}
Let $\Pi \in {\rm Coh}(G,\mu^{\sf v})$, with $\mu \in X^+_0(T)$. Suppose $\mu = (\mu_1,\dots,\mu_n)$ where $\mu_j = (a_j,b_j)$ and $a_j \geq b_j$. Then 
$$
s = \frac12 + m \in \frac12 + \Z \mbox{ is critical for $L(s,\Pi)$} \iff 
-a_j \leq m \leq -b_j, \ \forall j.
$$
\end{prop}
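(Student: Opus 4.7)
The plan is to reduce the question of criticality of $s = \tfrac12 + m$ for $L(s,\Pi)$ to an explicit computation with Gamma functions, via the local Langlands parameters of $\Pi_\infty$ already worked out in \ref{sec:reg-alg-coh}. Recall that Deligne's notion of critical point (which matches the classical one for standard $L$-functions on $\GL_2$) is: $s_0$ is critical for $L(s,\Pi)$ exactly when both $L_\infty(s,\Pi)$ and $L_\infty(1-s,\Pi^{\sf v})$ are holomorphic at $s = s_0$. So the proof reduces to pinning down the poles of the archimedean $L$-factors.

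First, from \ref{sec:reg-alg-coh} we have $\Pi_{\eta_j} \isom D_{a_j - b_j + 1} \otimes |\ |_{\R}^{(a_j+b_j)/2}$, and from the local Langlands correspondence in \ref{sec:llc-gl2r}, its parameter is $\tau(a_j - b_j + 1,\, (a_j + b_j)/2)$. Plugging into the formula for $L(s, \tau)$ in \ref{sec:llc-gl2r} and simplifying, the exponent inside the Gamma function becomes $s + (a_j+b_j)/2 + (a_j - b_j + 1)/2 = s + a_j + \tfrac12$, so
$$
L_\infty(s, \Pi) \;=\; \prod_{j=1}^n 2\,(2\pi)^{-(s + a_j + 1/2)}\,\Gamma\!\bigl(s + a_j + \tfrac12\bigr).
$$
At $s = \tfrac12 + m$ the Gamma argument is $m + a_j + 1$, and this gives a pole iff $m + a_j + 1 \leq 0$, i.e., iff $m < -a_j$. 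Thus $L_\infty(\tfrac12 + m, \Pi)$ is holomorphic iff $m \geq -a_j$ for every $j$.

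Next, for the contragredient: the Langlands parameter of $\Pi_{\eta_j}^{\sf v}$ is $\tau(a_j - b_j + 1,\, -(a_j + b_j)/2)$ (since $\mathrm{Ind}_{\C^*}^{W_\R}(\xi_l)^{\sf v} \cong \mathrm{Ind}_{\C^*}^{W_\R}(\xi_{-l}) \cong \mathrm{Ind}_{\C^*}^{W_\R}(\xi_l)$, and $|\ |_\R^t$ dualizes to $|\ |_\R^{-t}$). The same formula then gives
$$
L_\infty(s, \Pi^{\sf v}) \;=\; \prod_{j=1}^n 2\,(2\pi)^{-(s - b_j + 1/2)}\,\Gamma\!\bigl(s - b_j + \tfrac12\bigr).
$$
Evaluating at $1 - s = \tfrac12 - m$, the Gamma argument becomes $1 - m - b_j$, which is a non-positive integer exactly when $m \geq 1 - b_j$, i.e., when $m > -b_j$. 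Hence $L_\infty(1 - s, \Pi^{\sf v})|_{s = 1/2 + m}$ is holomorphic iff $m \leq -b_j$ for every $j$. Intersecting the two conditions yields exactly $-a_j \leq m \leq -b_j$ for all $j$, proving the proposition.

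There is no real obstacle here: the computation is essentially bookkeeping of shifts, and the only delicate point is correctly identifying the Langlands parameter of the contragredient $\Pi_{\eta_j}^{\sf v}$ (equivalently, noting that the formulas in \ref{sec:llc-gl2r} are invariant under $l \mapsto -l$, so only the twist parameter $t$ flips sign). Everything else follows at once from the Gamma-function pole locations.
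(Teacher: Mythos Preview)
Your proof is correct and follows essentially the same approach as the paper's own proof: identify $\Pi_{\eta_j}$ as a twisted discrete series via \ref{sec:reg-alg-coh}, read off the archimedean $L$-factors from \ref{sec:llc-gl2r}, and locate the Gamma poles on both sides of the functional equation. The only cosmetic difference is that the paper invokes the self-duality of $D_l$ directly rather than phrasing it through the Langlands parameter, and it drops the constants and exponential factors at the outset since they are irrelevant for pole analysis.
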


\begin{proof}
Recall the definition (as stated, for example, in Deligne \cite{deligne}) for a point  to be critical. If we are working with an $L$-function for $\GL_n$ then the so-called motivic normalization says that critical points are in the set $\frac{n-1}{2} + \Z$. In our situation, we would say $s = s_0  \in \frac12 + \Z$ is critical for $L(s,\Pi)$ if and only if both $L_{\infty}(s, \Pi_{\infty})$ and $L_{\infty}(1-s,\Pi_{\infty}^{\sf v})$ are regular at $s = s_0$, i.e., the $L$-factors at infinity on both sides of the functional equation do not have poles at $s=s_0$. (Automorphic $L$-functions are always normalized so that the functional equation looks like 
$L(s, \Pi) = \varepsilon(s,\Pi)L(1-s,\Pi^{\sf v})$.) 

Given $\Pi \in {\rm Coh}(G,\mu^{\sf v})$, we know from \ref{sec:infinite-components} and \ref{sec:gl2r} that 
$$
\Pi_{\infty} = \otimes_j \Pi_{\eta_j} = \otimes_j (D_{a_j-b_j+1} \otimes |\ |^{(a_j+b_j)/2}).
$$
Since $D_l$ is self-dual, we also have 
$$
\Pi_{\infty}^{\sf v} =  \otimes_j (D_{a_j-b_j+1} \otimes |\ |^{-(a_j+b_j)/2}).
$$
Using the information in \ref{sec:llc-gl2r} on the local factors for $\GL_2(\R)$, and ignoring nonzero constants and exponentials (which are irrelevant to compute critical points) we have: 
$$
L_{\infty}(s,\Pi_{\infty}) \sim \prod_j \Gamma(s + \frac12 + a_j), \ \ 
L_{\infty}(1-s,\Pi_{\infty}^{\sf v}) \sim \prod_j \Gamma(1-s + \frac12 - b_j).
$$
Hence, $L_{\infty}(s, \Pi_{\infty})$ is regular at $ \frac12 + m$ if and only if $m + a_j \geq 0$; similarly, 
$L_{\infty}(1-s,\Pi_{\infty}^{\sf v})$ is regular at $ \frac12 + m$ if and only if $-m - b_j \geq 0$. 
\end{proof}

\begin{cor}
Let $\Pi \in {\rm Coh}(G,\mu^{\sf v})$, with $\mu \in X^+_0(T)$. Suppose $\mu = (\mu_1,\dots,\mu_n)$ where $\mu_j = (a_j,b_j)$ and $a_j \geq b_j$. 
The center of symmetry $s = 1/2$ is critical if and only if ${\rm Hom}_{\GL_1(F_{\infty})}(E_{\mu}^{\sf v}, 1\!\!1) \neq 0$.
\end{cor}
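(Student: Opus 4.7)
The proof is essentially an assembly of two prior results, so my plan is to identify them and combine them cleanly.

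First, I would specialize the preceding proposition to the case $m = 0$, since $s = 1/2 = 1/2 + 0$. The proposition tells us that $s = 1/2 + m$ is critical precisely when $-a_j \leq m \leq -b_j$ for every $j$. Setting $m = 0$, this collapses to the simultaneous inequality $a_j \geq 0 \geq b_j$ for all $1 \leq j \leq n$. So the condition ``$s = 1/2$ is critical'' is equivalent to the numerical condition on $\mu$ that each component weight $\mu_j = (a_j, b_j)$ straddles zero.

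Next, I would invoke Lemma~\ref{lem:trivial-repn-gl1}, which for a single factor $\GL_2(\R)$ says exactly that ${\rm Hom}_{\GL_1(\R)}(E_{\nu}^{\sf v}, 1\!\!1) \neq 0$ if and only if $\nu_1 \geq 0 \geq \nu_2$. To pass from the one-factor statement to the statement over $F$, I note that $G_\infty = \prod_{j=1}^n \GL_2(\R)$, that $\iota^* G_\infty = \prod_{j=1}^n \GL_1(\R) = \GL_1(F_\infty)$, and that the representation decomposes as $E_\mu^{\sf v} = \bigotimes_{j=1}^n E_{\mu_j}^{\sf v}$. Consequently, by the standard factorization of Hom-spaces under outer tensor products,
\[
{\rm Hom}_{\GL_1(F_\infty)}(E_\mu^{\sf v}, 1\!\!1) \;\cong\; \bigotimes_{j=1}^n {\rm Hom}_{\GL_1(\R)}(E_{\mu_j}^{\sf v}, 1\!\!1),
\]
and this tensor product is nonzero if and only if each factor is nonzero, i.e., if and only if $a_j \geq 0 \geq b_j$ for every $j$.

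Combining the two equivalences yields the corollary. There is no real obstacle here; the only thing to be careful about is the factorization of the Hom-space, which is immediate since the $\GL_1(F_\infty)$-action on $E_\mu^{\sf v}$ is the outer tensor product of the $\GL_1(\R)$-actions via $\iota$ applied factor-by-factor. No further computation is needed.
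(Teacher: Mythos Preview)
Your proof is correct and follows exactly the approach the paper takes: the paper's proof is the single line ``Follows from the above proposition and Lemma~\ref{lem:trivial-repn-gl1},'' and you have simply unpacked that line by specializing the proposition to $m=0$ and applying the lemma factor by factor via the tensor decomposition of $E_\mu^{\sf v}$.
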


\begin{proof}
Follows from the above proposition and Lemma~\ref{lem:trivial-repn-gl1}.
\end{proof}

\subsubsection{The cohomology class $\mathcal{T}^* \iota^* \vartheta_{\Pi}$ with trivial coefficients}
\label{sec:pullback-t}
When $s=1/2$ is critical, let us let 
$$
\mathcal{T} \in {\rm Hom}_{\GL_1(F_{\infty})}(E_{\mu}^{\sf v}, 1\!\!1)
$$ 
be the nonzero element as prescribed by Lemma~\ref{lem:trivial-repn-gl1}. 
Since everything factors over infinite places, we can let $\mathcal{T} = \otimes_{j=1}^n \mathcal{T}_j$, with 
$\mathcal{T}_j \in {\rm Hom}_{\GL_1(F_{\eta_j})}(E_{\mu_j}^{\sf v}, 1\!\!1)$. 
The map $\mathcal{T}$ induces a morphism of sheaves on the space $\bar{S}^{G_1}_{R_f}$, and by functoriality, a homomorphism  
$$
\mathcal{T}^* : H^n_c(\bar{S}^{G_1}_{R_f}, \iota^*\E_{\mu}^{\sf v}) \to H^n_c(\bar{S}^{G_1}_{R_f}, \C). 
$$

The image of the class $\iota^*\vartheta_{\Pi}$ under $\mathcal{T}^*$,  expressed in terms of relative Lie algebra cocycles,  is given by:
\begin{equation}
\label{eqn:vartheta-c-coefficients}
\mathcal{T}^* \iota^* \vartheta_{\Pi} = 
\sum_{l = (l_1,\dots,l_n)} \sum_{\alpha = (\alpha_1,\dots,\alpha_n)} \iota^*{\bf z}_l^* \otimes \iota^*\phi_{l, \alpha} \otimes \mathcal{T}({\sf s}_{\alpha}) = 
\sum_{l = (l_1,\dots,l_n)} \iota^*{\bf z}_l^* \otimes \iota^*\phi_{l, \underline{a}}
\end{equation}
where $\underline{a} = (a_1,\dots,a_n)$. This follows from (\ref{eqn:vartheta}) and Lemma~\ref{lem:trivial-repn-gl1}. 

Since the map  $\mathcal{T}$ is defined over $\Q$, as after all the standard basis for $E_{\mu_j}^{\sf v}$ gives it a $\Q$-structure, the morphism
$\mathcal{T}^*$ is rational, i.e, for all $\sigma \in {\rm Aut}(\C)$ we have
\begin{equation}
\label{eqn:t*-rational}
\sigma \circ \mathcal{T}^* = \mathcal{T}^* \circ \sigma
\end{equation}

Observe that $\mathcal{T}^* \iota^*\vartheta_{\Pi}$ is a top-degree compactly supported cohomology class for $\bar{S}^{\GL_1}_{R_f}$.

\subsubsection{Top-degree cohomology with compact supports}
Let us recall some basic topological facts here. Let $M$ be an oriented $n$-dimensional manifold with $h$ connected components, indexed by  
$\nu$ with $1 \leq \nu \leq h$. Then Poincar\'e duality implies that 
$$
H^n_c(M,\C) \isom \oplus_{\nu = 1}^h \C.
$$
(See, for example, Harder \cite[4.8.5]{harder-book}.) The map is integration over the entire manifold with some chosen orientation; for each connected component you get a complex number. Now let us add these numbers to get a map $\vartheta \mapsto \int_M \vartheta$
$$
\int_M : H^n_c(M,\C)  \to \C. 
$$
As explained in Raghuram \cite[3.2.3]{raghuram}, such a map given by Poincar\'e duality is rational, i.e., 
\begin{equation}
\label{eqn:poincare}
\sigma\left(\int_M \vartheta \right) = \int_M {}^{\sigma}\vartheta.
\end{equation}

\subsubsection{The main identity}
Recall that $\mathcal{T}^* \iota^*\vartheta_{\Pi} \in H^n_c(\bar{S}^{G_1}_{R_f}, \C)$ is a top-degree compactly supported cohomology class. 
We can integrate it over all of $\bar{S}^{G_1}_{R_f}$. The main technical theorem needed to analyze the arithmetic properties of the 
special value $L(1/2, \Pi)$ is 
\begin{thm}
\label{thm:main}
$$
\int_{\bar{S}^{G_1}_{R_f}} \mathcal{T}^* \iota^*\vartheta_{\Pi} \ = \ 
\frac{\langle [\Pi_{\infty}]^{+\!+} \rangle }{(4i)^n{\rm vol}(R_f)} \, L_f(1/2, \Pi) 
$$
where 
$$
\langle [\Pi_{\infty}]^{+\!+} \rangle = \sum_{l = (l_1,\dots,l_n)} \zeta_{\infty}(1/2, W_{l, \underline{a},\infty}). 
$$
\end{thm}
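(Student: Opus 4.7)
The plan is to unwind the integral using the explicit cocycle representative in (\ref{eqn:vartheta-c-coefficients}), compute the Lie-algebra part by hand, and reduce the remaining adelic integral to Proposition~\ref{prop:global-integral}.

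First, I would make the cohomology class $\mathcal{T}^*\iota^*\vartheta_\Pi$ concrete as a top-degree differential form on $\bar S^{G_1}_{R_f}$. The $E_\mu^{\sf v}$-coefficient $\mathcal{T}(\mathsf{s}_\alpha)$ survives only when $\alpha=\underline{a}$ by Lemma~\ref{lem:trivial-repn-gl1}, leaving $\sum_{l}\iota^*{\bf z}_l^*\otimes \iota^*\phi_{l,\underline{a}}$. Next I would compute $\iota^*{\bf z}_{j,l_j}^*$. Since $d\iota$ sends the canonical generator of ${\rm Lie}(\GL_1(\R))$ to $E_{11}=\mathrm{diag}(1,0)$, and since a direct matrix computation using the definitions of ${\bf z}_1,{\bf z}_2$ gives
$$
E_{11}\equiv -\tfrac{i}{4}({\bf z}_1+{\bf z}_2)\pmod{\k_2},
$$
left-invariance yields $\iota^*{\bf z}_{j,l_j}^*=-\tfrac{i}{4}\,dx_j/x_j$ for either choice of $l_j\in\{1,2\}$. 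Wedging over $j$ produces the scalar $(-i/4)^n$ times the standard Haar form on $F_{\infty^+}^{\times}$, and this scalar is independent of $l$.

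With the form explicit, I would convert the quotient integral over $\bar S^{G_1}_{R_f} = F^\times\backslash\A_F^\times/R_f$ into an integral over $F^\times\backslash\A_F^\times$ by Fubini, picking up a factor $1/{\rm vol}(R_f)$ since the integrand is right-$R_f$-invariant. The resulting integral
$$
\int_{F^\times\backslash\A_F^\times}\phi_{l,\underline{a}}(\iota(x))\,dx
$$
is precisely the one treated by Proposition~\ref{prop:global-integral} applied to $\phi_{l,\underline{a}}$, whose Whittaker function factorises as $W_{l,\underline{a},\infty}\otimes W^\circ$ by construction of $\vartheta_\Pi$. The proposition evaluates it as $\zeta_\infty(1/2, W_{l,\underline{a},\infty})\cdot L_f(1/2,\Pi)$. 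Summing over the $2^n$ tuples $l$ and using the identity $(-i)^n\cdot 4^{-n}=(4i)^{-n}$ assembles the right-hand side of the theorem.

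The one genuine subtlety — and the point at which I expect to have to be most careful — is to match the canonical orientation on $\bar S^{G_1}_{R_f}$ (assembled from the positive-chamber orientations on each connected component, all modelled on $F_{\infty^+}^{\times}$) with the product Haar measure appearing in Proposition~\ref{prop:global-integral}. Once this measure-theoretic bookkeeping is confirmed, and once one checks that the pullbacks $\iota^*{\bf z}_{j,l_j}^*$ really do extend consistently as left-invariant forms descending to the quotient, the above chain of equalities is essentially mechanical.
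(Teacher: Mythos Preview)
Your proposal is correct and follows essentially the same route as the paper: reduce to $\alpha=\underline{a}$ via Lemma~\ref{lem:trivial-repn-gl1}, compute $\iota^*{\bf z}_{j,l_j}^*$ by writing $E_{11}\equiv \tfrac{1}{4i}({\bf z}_1+{\bf z}_2)\pmod{\k_2}$ (your $-i/4$ is the same scalar), unfold the quotient integral picking up $1/{\rm vol}(R_f)$, and apply Proposition~\ref{prop:global-integral}. The orientation and measure bookkeeping you flag as a subtlety is exactly what the paper handles by fixing ${\bf t}_1:=1$ as a basis of $\g_1$ and declaring that this choice fixes the orientation on each connected component of $\bar S^{G_1}_{R_f}$.
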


\begin{proof}
Recall that (\ref{eqn:vartheta-c-coefficients}) gives
$$
\mathcal{T}^* \iota^*\vartheta_{\Pi} = 
\sum_{l = (l_1,\dots,l_n)} \sum_{\alpha = (\alpha_1,\dots,\alpha_n)} \iota^*{\bf z}_l^* \otimes \iota^*\phi_{l, \alpha} \otimes \mathcal{T}({\sf s}_{\alpha}) = 
\sum_{l = (l_1,\dots,l_n)} \iota^*{\bf z}_l^* \otimes \iota^*\phi_{l, \underline{a}}.
$$
We will identify the terms $\iota^*{\bf z}_l^*$. Consider just one copy of $\GL_1(\R)$ sitting inside $\GL_2(\R)$ via $\iota$. 
Let ${\bf t}_1 := 1$ be a basis for $\g_1 = \C$. (Fixing ${\bf t}_1$ is tantamount to fixing an orientation on $\R_{>0} = \GL_1(\R)^0$. 
Taking all the infinite places together, this will be fixing an orientation on each connected component of $\bar{S}^{G_1}_{R_f}$.)  
Note that 
\begin{eqnarray*}
\iota({\bf t}_1) & = & \frac{1}{4i} \left({\bf z}_1 + {\bf z}_2 + \left(\begin{smallmatrix}2 & 0 \\ 0 & 2 \end{smallmatrix}\right)\right), \ {\rm in} \ \g_2, \\
& = & \frac{1}{4i}  \left({\bf z}_1 + {\bf z}_2\right), \ {\rm in} \ \g_2/k_2, 
\end{eqnarray*}
hence 
$\iota^*{\bf z}_1^*  = \iota^*{\bf z}_2^* = \frac{1}{4i} {\bf t}_1^*$. Applying this to each infinite place, we see 
$$
 \iota^*{\bf z}_l^* = \otimes  \iota^*{\bf z}_{j, l_j}^* = \frac{1}{(4i)^n} \otimes_{j=1}^n {\bf t}_{j,1}^*,
$$ 
where ${\bf t}_{j,1}$ is the element ${\bf t}_1$ for the infinite place $\eta_j$. 

Using the fact that $\phi_{l, \alpha}$ is fixed by $K_f$ which implies that $\iota^*\phi_{l, \underline{a}}$ is fixed by $R_f = \iota^*K_f$ we get 
\begin{eqnarray*}
\int_{\bar{S}^{G_1}_{R_f}} \mathcal{T}^* \iota^*\vartheta_{\Pi}  
& = & \sum_{l = (l_1,\dots,l_n)} \frac{1}{(4i)^n} \int_{\bar{S}^{G_1}_{R_f}} \iota^*\phi_{l, \underline{a}} \\ 
& = &  \frac{1}{(4i)^n}  \sum_{l = (l_1,\dots,l_n)} \int_{F^{\times}\backslash \A_F^{\times} / R_f} 
\phi_{l, \underline{a}}\left(\left(\begin{array}{cc}x & \\ & 1\end{array}\right)\right)\, dx \\
& = &  \frac{1}{(4i)^n{\rm vol}(R_f)}  \sum_{l = (l_1,\dots,l_n)} \int_{F^{\times}\backslash \A_F^{\times} } 
\phi_{l, \underline{a}}\left(\left(\begin{array}{cc}x & \\ & 1\end{array}\right)\right)\, dx \\
& = & \frac{1}{(4i)^n{\rm vol}(R_f)}  \sum_{l = (l_1,\dots,l_n)} \left(\zeta_{\infty}(1/2, W_{l, \underline{a}, \infty}) L_f(1/2, \Pi)\right)
\end{eqnarray*}
where the last equality is due to Proposition~\ref{prop:global-integral}.
\end{proof}

\subsubsection{Archimedean computations}

\begin{prop}
\label{prop:archimedean-mess}
Given $\Pi \in {\rm Coh}(G,\mu^{\sf v})$, with $\mu = (\mu_1,\dots,\mu_n)$ and $\mu_j = (a_j,b_j)$, we have 
$$\langle [\Pi_{\infty}]^{+\!+} \rangle  \ = \ c\, (2 \pi i )^{-d_{\infty}} i^n$$
where $d_{\infty} = \sum_{j=1}^n (a_j + 1)$, and $c$ is a nonzero integer (which is made explicit in the proof).
\end{prop}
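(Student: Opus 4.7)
\medskip

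\noindent\textbf{Proof proposal.} The plan is to reduce the computation of $\langle [\Pi_{\infty}]^{+\!+} \rangle$ to a product over infinite places of explicit archimedean Whittaker integrals, each of which can be computed as a classical gamma integral. Since the Whittaker vectors at infinity factor as $W_{l,\underline{a},\infty} = \otimes_{j=1}^{n} \lambda_{j,l_j,a_j}$ and the archimedean zeta integral factors correspondingly, the defining sum decomposes as
$$
\langle [\Pi_{\infty}]^{+\!+} \rangle \;=\; \sum_{l = (l_1,\dots,l_n)} \prod_{j=1}^{n} \zeta_{\eta_j}\!\bigl(\tfrac{1}{2},\lambda_{j,l_j,a_j}\bigr)
\;=\; \prod_{j=1}^{n} \Bigl(\zeta_{\eta_j}(\tfrac{1}{2},\lambda_{j,1,a_j}) + \zeta_{\eta_j}(\tfrac{1}{2},\lambda_{j,2,a_j})\Bigr),
$$
after interchanging the sum over the $2^n$ tuples $l$ with the product over $j$.

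Next, at each infinite place $\eta_j$, I would use the explicit formulas for $\lambda_{j,1,a_j}$ and $\lambda_{j,2,a_j}$ recorded just before (\ref{eqn:lambda-j}); these are rational (in fact integer) multiples, with explicit $i$-power factors $i^{\pm a_j}$, of the two minimum $K$-type Whittaker vectors $\lambda_{\pm(a_j - b_j + 2)}$ of the discrete series representation $\Pi_{\eta_j} \simeq D_{a_j-b_j+1}\otimes |\ |^{(a_j+b_j)/2}$. Using the standard formulas (see, e.g., Popa, or a direct Jacquet-integral computation) the Whittaker function of the holomorphic (resp.\ antiholomorphic) vector $\phi_{\pm(a_j-b_j+2)}$, normalized by $\phi_{\pm(a_j-b_j+2)}(e)=1$ in the induced model of (\ref{eqn:exact-seq-dsr}), is supported on $y>0$ (resp.\ $y<0$) and is given by an explicit constant times $|y|^{(a_j-b_j+2)/2 + (a_j+b_j)/2} e^{-2\pi|y|}$. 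Inserting this into the zeta integral at $s=1/2$ reduces the computation to
$$
\int_{0}^{\infty} y^{a_j+1}\, e^{-2\pi y}\, \frac{dy}{y} \;=\; \Gamma(a_j+1)(2\pi)^{-(a_j+1)} \;=\; \frac{a_j!}{(2\pi)^{a_j+1}},
$$
so each of $\zeta_{\eta_j}(1/2,\lambda_{\pm(a_j-b_j+2)})$ equals this common value up to an explicit rational constant and an explicit power of $i$.

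Substituting back and multiplying over $j$, the powers of $2\pi$ combine into $(2\pi)^{-\sum_j(a_j+1)} = (2\pi)^{-d_{\infty}}$; the gamma values contribute the integer factor $\prod_j a_j!$; and the product of the binomial coefficients $\binom{a_j-b_j}{a_j}$ from (\ref{eqn:lambda-j}) contributes another nonzero integer. The remaining phase factors from $i^{\pm a_j}$ at each place and from the normalization of $\phi_{\pm(a_j-b_j+2)}$ collect into a global factor $i^{-\sum_j a_j} = i^{n-d_{\infty}}$; writing $(2\pi)^{-d_{\infty}} i^{n-d_{\infty}} = (2\pi i)^{-d_{\infty}} i^{n}$ puts the answer in exactly the form $c\,(2\pi i)^{-d_{\infty}}\, i^{n}$ with $c$ a nonzero integer, as claimed.

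The main obstacle will be carefully tracking the various $i$-power normalization factors: between the Jacquet integral defining $\lambda_{\pm(a_j-b_j+2)}$ from $\phi_{\pm(a_j-b_j+2)}$, the $i^{\pm a_j}$ factors appearing in (\ref{eqn:lambda-j}) coming from the change of basis (\ref{eqn:gl2-k-type-to-standard}) between the standard basis and the $K$-type basis of $E_{\mu_j}^{\sf v}$, and the archimedean character $\psi_{\R}(x)=e^{-2\pi i x}$. Once all phases are aligned, the collapse to $i^{n}$ is purely formal and the nonvanishing of $c$ is visible from the fact that each local factor in the product is a nonzero sum of a strictly positive rational and an explicit $i$-twist of the same quantity.
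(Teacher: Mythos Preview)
Your proposal is correct and follows essentially the same route as the paper: factor the sum over $l$ into a product over the $n$ places, at each place reduce the sum of the two zeta integrals to a gamma integral, and then collect powers of $2\pi$ and $i$. The only notable difference is in how the two local terms are combined: rather than computing the Whittaker functions of $\phi_{\pm(a_j-b_j+2)}$ separately, the paper uses the relation $\delta\cdot\lambda_{a_j-b_j+2}=i^{2a_j}\lambda_{-(a_j-b_j+2)}$ (from Lemma~\ref{lem:delta}) together with the substitution $x\mapsto -x$ to see that the second integral equals the first, and then invokes the known fact (Gelbart) that the zeta integral of the lowest-weight Whittaker vector is exactly the archimedean $L$-factor $2(2\pi)^{-(s+\tfrac12+a_j)}\Gamma(s+\tfrac12+a_j)$; this bypasses the explicit Jacquet-integral computation you propose and makes the phase bookkeeping you flag as ``the main obstacle'' essentially automatic, yielding the explicit constant $c=4^n\prod_j(-1)^{a_j}(a_j-b_j)!/(-b_j)!$.
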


\begin{proof}
To compute $\langle [\Pi_{\infty}]^{+\!+} \rangle =  \sum_{l = (l_1,\dots,l_n)} \zeta_{\infty}(1/2, W_{l, \underline{a},\infty})$, let us begin by noting that each summand is a product over infinite places: 
$$
\zeta_{\infty}(1/2, W_{l, \underline{a},\infty}) = \prod_{j=1}^n \zeta_{\eta_j}(1/2, \lambda_{j,l_j,a_j})
$$
where the $\lambda_{j,l_j,a_j}$ are as in (\ref{eqn:lambda-j}). We can rewrite the expression for $\langle [\Pi_{\infty}]^{+\!+} \rangle $ as 
$$
 \sum_{l = (l_1,\dots,l_n)}  \prod_{j=1}^n \zeta_{\eta_j}(1/2, \lambda_{j,l_j,a_j}) = 
\prod_{j=1}^n \left( \zeta_{\eta_j}(1/2, \lambda_{j, 1 ,a_j}) + \zeta_{\eta_j}(1/2, \lambda_{j, 2 ,a_j})\right). 
$$
The $j$-th factor in the right hand side  is the value at $s=1/2$ of the sum of two zeta-integrals:
$$
\zeta_{\eta_j}(s, \lambda_{j, 1 ,a_j}) + \zeta_{\eta_j}(s, \lambda_{j, 2 ,a_j}). 
$$
Using the definitions of $\zeta_{\eta}$ and $\lambda_{j, l_j,a_j}$ we get  
{\small
$$
 \int_{x \in \R^*} \dbinom{a_j-b_j}{a_j}
i^{a_j}\lambda_{(a_j-b_j+2)}\left(\left(\begin{array}{cc} x & 0 \\ 0 & 1 \end{array}\right)\right) |x|^{s-\frac12} dx 
\ + \ 
 \int_{x \in \R^*} \dbinom{a_j-b_j}{a_j}
i^{-a_j}\lambda_{-(a_j-b_j+2)}\left(\left(\begin{array}{cc} x & 0 \\ 0 & 1 \end{array}\right)\right) |x|^{s-\frac12} dx.
$$}
Recall that the integrals converge for $\Re{s} \gg 0$.
In the second integral, using the fact that $\delta(\lambda_{a_j-b_j+2})  =  i^{2 a_j} \lambda_{-(a_j-b_j+2)}$, and changing variable from $x$ to $-x$, 
we see that it is the same as the first integral. Hence 
$$
\zeta_{\eta_j}(s, \lambda_{j, 1 ,a_j}) + \zeta_{\eta_j}(s, \lambda_{j, 2 ,a_j}) = 
2  \dbinom{a_j-b_j}{a_j} i^{a_j} 
\int_{x \in \R^*} \lambda_{(a_j-b_j+2)}\left(\left(\begin{array}{cc} x & 0 \\ 0 & 1 \end{array}\right)\right) |x|^{s-\frac12} dx.
$$
It is well-known that the zeta-integral of `the lowest weight vector' $\lambda_{(a_j-b_j+2)}$ gives the local $L$-factor $L(s, \Pi_{\eta_j})$, i.e., 
the local factor for the representation $\Pi_{\eta_j} = D_{a_j-b_j+1} \otimes |\ |^{(a_j+b_j)/2}$. (See, for example, Gelbart \cite[Proposition 6.17]{gelbart}.) 
Using the information on local factors in \ref{sec:llc-gl2r}, we get
$$
\zeta_{\eta_j}(s, \lambda_{j, 1 ,a_j}) + \zeta_{\eta_j}(s, \lambda_{j, 2 ,a_j}) = 
4  \dbinom{a_j-b_j}{a_j} i^{a_j} (2\pi)^{-(s + \frac12 + a_j)} \Gamma\left(s + \frac12  + a_j\right). 
$$
The left hand side converges for $\Re{s} \gg 0$ and has a meromorphic continuation to all $s$, and in the right hand side the $\Gamma$-function also makes sense, after continuation, to all $s$; hence we can put $s=1/2$ to get
$$
\zeta_{\eta_j}(1/2, \lambda_{j, 1 ,a_j}) + \zeta_{\eta_j}(1/2, \lambda_{j, 2 ,a_j}) = 
4  \dbinom{a_j-b_j}{a_j} \Gamma(a_j+1) i^{a_j} (2\pi)^{-(a_j+1)} = 
4 \frac{(a_j-b_j)!}{(-b_j)!} (-1)^{a_j} i (2\pi i)^{-(a_j+1)}.
$$
Hence $\langle [\Pi_{\infty}]^{+\!+} \rangle =  c\, (2 \pi i )^{-d_{\infty}} i^n$ where $c$ is the integer: 
$$
c = 4^n \prod_{j=1}^n (-1)^{a_j} \frac{(a_j-b_j)!}{(-b_j)!}.
$$
\end{proof}

Let us note that $\sigma \in {\rm Aut}(\C)$ acts on $\Pi_{\infty}$ by permuting the infinite places; in particular, 
$$
\langle [{}^{\sigma}\Pi_{\infty}]^{+\!+} \rangle  = \langle [\Pi_{\infty}]^{+\!+} \rangle.
$$

This kind of an explicit computation very quickly escalates in complexity when we go from $\GL_2$ to higher $\GL_n$. Indeed, there are many conditional theorems on special values of $L$-functions that have been proved under the assumption that a 
quantity analogous to $\langle [\Pi_{\infty}]^{+\!+} \rangle$ is nonzero. See, for example, 
Kazhdan-Mazur-Schmidt \cite{kazhdan-mazur-schmidt}, Mahnkopf \cite{mahnkopf}, or Raghuram \cite{raghuram}.

\subsubsection{Concluding part of the proof of Theorem~\ref{thm:central}}
We can now finish the proof as follows. Using Proposition~\ref{prop:archimedean-mess} in the main identity of Theorem~\ref{thm:main} we have 
\begin{equation}
\label{eqn:main-identity-cleaned}
\int_{\bar{S}^{G_1}_{R_f}} \mathcal{T}^* \iota^*\vartheta_{\Pi} \ = \ 
\frac{c}{4^n{\rm vol}(R_f)} \frac{L(1/2,\Pi)}{(2 \pi i)^{d_{\infty}}}.
\end{equation}
Apply $\sigma \in {\rm Aut}(\C)$ to both sides, while noting that $c/(4^n{\rm vol}(R_f))$ is a nonzero rational number, to get
$$
\sigma\left( \int_{\bar{S}^{G_1}_{R_f}} \mathcal{T}^* \iota^*\vartheta_{\Pi} \right) \ = \ 
\frac{c}{4^n{\rm vol}(R_f)} \ 
\sigma \left( \frac{L(1/2, \Pi)} {(2 \pi i)^{d_{\infty}}} \right).
$$
Using (\ref{eqn:poincare}), (\ref{eqn:t*-rational}), that $\sigma$ commutes with $\iota^*$--since restriction of a class to a submanifold is a rational operation,  
and using Proposition \ref{prop:vartheta-sigma},  we get that the left hand side simplifies to 
$$
\sigma\left( \int_{\bar{S}^{G_1}_{R_f}} \mathcal{T}^* \iota^*\vartheta_{\Pi} \right) \ = \ 
\frac{\sigma(p^{+\!+}(\Pi))}{p^{+\!+}({}^{\sigma}\Pi)} 
\int_{\bar{S}^{G_1}_{R_f}}\mathcal{T}^* \iota^*\vartheta_{{}^{\sigma}\!\Pi} = 
\frac{\sigma(p^{+\!+}(\Pi))}{p^{+\!+}({}^{\sigma}\Pi)}
\frac{c}{4^n{\rm vol}(R_f)} 
\frac{L(1/2, {}^{\sigma}\!\Pi)}{(2 \pi i)^{d_{\infty}}}, 
$$
where the last equality follows by applying (\ref{eqn:main-identity-cleaned}) for the representation ${}^{\sigma}\Pi$. Hence, we have 
$$
\frac{c}{4^n{\rm vol}(R_f)} \,
\sigma \left( \frac{L(1/2, \Pi)} {(2 \pi i)^{d_{\infty}}} \right)  \ = \ 
\frac{\sigma(p^{+\!+}(\Pi))}{p^{+\!+}({}^{\sigma}\Pi)}
\frac{c}{4^n{\rm vol}(R_f)}
\frac{L(1/2, {}^{\sigma}\Pi)}{(2 \pi i)^{d_{\infty}}}. 
$$
The proof of Theorem~\ref{thm:central} follows easily from this equation.

\subsubsection{Proof of Corollary~\ref{cor:all}}
Let $s = \frac12 + m \in \frac12 + \Z$ be any critical point for $L(s,\Pi)$. Let us note that 
$$
L(s + m, \Pi) = L(s, \Pi\otimes |\ |^m).
$$
Hence $1/2$ is critical for $L(s, \Pi \otimes |\ |^m)$. Now we apply Theorem~\ref{thm:central} to the representation $\Pi \otimes |\ |^m$. There is one nontrivial point to note, i.e., the coefficient system has changed. It is easy to see that 
$$
\Pi \in {\rm Coh}(G,\mu^{\sf v}) \implies \Pi \otimes |\ |^m  \in {\rm Coh}(G, (\mu+m)^{\sf v}), 
$$
where, if $\mu = (\mu_1,\dots, \mu_j)$, with $\mu_j = (a_j,b_j)$, then $\mu+m = (\mu_1+m,\dots, \mu_j+m)$, with $\mu_j = (a_j+m,b_j+m).$ Hence the integer 
$d_{\infty} = d(\Pi_{\infty})$ also changes:
$$
d(\Pi_{\infty} \otimes |\ |^m) = d(\Pi_{\infty}) + mn.
$$
Further, let us note that the main theorem of Raghuram-Shahidi \cite{raghuram-shahidi-imrn}, applied to the special case when the twisting character is 
$|\ |^m$, gives the period relation:  
$$
p^{+\!+}(\Pi \otimes  |\ |^m) = p^{(-1)^m(+\!+)}(\Pi).
$$

Note that twisting by a finite order character $\chi$ of $F^{\times}\backslash \A_F^{\times}$ does not change the set of critical points. 
Corollary~\ref{cor:all} follows by the period relations of Raghuram-Shahidi \cite{raghuram-shahidi-imrn} as recalled in 
\ref{sec:period-relations}.

\section{Hilbert modular forms}
\label{sec:hilbert}

The purpose of this section is to write down an explicit correspondence between primitive holomorphic Hilbert cusp forms and cuspidal automorphic representations of the ad\`ele group of $\GL_2$ over a totally real number field $F$. The precise definitions and basic properties of Hilbert modular forms are discussed in \ref{classic_hilbert}.  In \ref{cusp_to_rep} we attach a cuspidal automorphic representation to a holomorphic Hilbert cusp form, 
and in \ref{rep_to_cusp} we show how to retrieve a classical cusp form from a representation of appropriate type. The rest of the section is devoted to proving the arithmetic properties of this correspondence as stated in Theorem~\ref{thm:dictionary}.

\subsection{Classical holomorphic Hilbert modular forms}\label{classic_hilbert}

	\subsubsection{Some more notations regarding the base field} Let $F$ be a totally real number field of degree $n$, $\ringO=\ringO_F$ the ring of integers in $F$, and $\n$ a fixed integral ideal in $F$. The real embeddings of $F$ are denoted $\eta_j$ with $j=1, \cdots, n$, and we put $\eta=(\eta_1, \cdots, \eta_n)$ with a fixed order of $\{\eta_j\}$. With respect to this $\eta$, $F$ naturally sits inside $\R^n$, and an element $\alpha$ in $F$ will be expressed as $(\alpha_1, \cdots, \alpha_n)$ for $(\eta_1(\alpha), \cdots, \eta_n(\alpha))$ to be considered as an element of $\R^n$. We write $F_+$ for all the totally positive elements in $F$.  Let $k=(k_1, \cdots, k_n)\in\Z^n$, and $\alpha=(\alpha_1, \cdots, \alpha_n)\in\R^n$. We write $\alpha^k$ to mean 
	$\prod_{k=1}^n\alpha_j^{k_j}$.
	
Let $h=h_F$ be the narrow class number of $F$, and let $\{ t_\nu\}_{\nu=1}^{h}$ be elements of $\A_F$ whose infinity part is $1$ and that form a complete set of representatives of the narrow class group. (See Section~\ref{class_gp} for the details.) Put $x_\nu=\left(\begin{array}{ll} 1 & \\ & t_\nu \end{array}\right)$ and $x_\nu^{\iota}=\left(\begin{array}{ll} t_\nu & \\ & 1\end{array}\right)$. Here, $\iota$ denotes the involution defined as $^{\iota}{\rm A}=w_0\, {}^t\!{\rm A}w_0^{-1}$, where $t$ is transpose and $w_0=\left(\begin{array}{ll} & 1 \\ -1 & \end{array}\right)$. 

Let $F_\p$ be the completion of $F$ at a non-archimedean place $\p$, and define a subgroup $\K_\p(\n)$ of $\GL_2(F_\p)$ as
\begin{equation}\label{compact_subgp}
\K_\p(\n):=\left\{ \left( \begin{array}{cc} a & b \\ c & d \end{array}\right) \in \GL_2(F_\p)\, :\, \begin{array}{ccc} a\ringO_\p +\n_\p = \ringO_\p, & b\in\Dif_\p^{-1}, & \\ c\in\n_\p\Dif_\p, & d\in\ringO_\p, & ad-bc\in \ringO^{\times} \end{array} \right\},
\end{equation}
where $\n_\p$ and $\Dif_\p$ are $\p$-parts of $\n$ and the `different' $\Dif$ of $\ringO$, respectively, and put
\[ \K_0(\n):=\prod_{\p<\infty}\K_\p(\n).\] 
Then $\GL_2(\A_F)$ affords a decomposition given as a disjoint union,
\begin{equation}\label{GL_decomp}
 \GL_2(\A_F)=\cup_{\nu=1}^h \GL_2(F)x_\nu^{-\iota}\left(\GL_2^+(F_\infty)\K_0(\n)\right),
\end{equation}
with $\GL_2^+(F_\infty)=\GL_2^+(\R)^n$. 

Define also a congruence subgroup $\Gamma_\nu(\n)$ of $\GL_2(F)$ for each $\nu$ as
\[ \Gamma_\nu(\n)=\left\{ \left( \begin{array}{cc} a & t_\nu^{-1} b \\ t_\nu c & d \end{array}\right) \, : \, \begin{array}{lll} a\in\ringO, & b\in\Dif^{-1}, & \\ c\in\n\Dif, & d\in\ringO, & ad-bc\in\ringO^\times \end{array} \right\}.\]
We note that $\Gamma_\nu(\n)$ can be viewed as
\[ \Gamma_\nu(\n)=x_\nu\left(\GL_2^+(F_\infty)\K_0(\n)\right)x_\nu^{-1} \cap \GL_2(F).\]

	\subsubsection{Hilbert automorphic forms of holomorphic type} Let $\gamma=(\gamma_1, \cdots, \gamma_n)$ be an element of $\GL_2(\R)^n$, and write $\gamma_j=\left(\begin{array}{ll} a_j & b_j \\ c_j & d_j \end{array}\right)$ for each $j=1, \cdots, n$. Then $\gamma$ acts on $\h^n$ by
\[ \gamma.z=\left( \frac{a_1z_1+b_1}{c_1z_1+d_1}, \cdots, \frac{a_nz_n+b_n}{c_nz_n+d_n}\right),\]
with $z=(z_1, \cdots, z_n)\in \h^n$. For a holomorphic function $f$ on $\h^n$, an element $\gamma\in\GL_2(\R)^n$, and $k=(k_1, \cdots, k_n)\in\Z^n$, define
\begin{eqnarray*}
f||_k \gamma (z) &=& \det \gamma^{k/2} {\rm j}(\gamma, z)^{-k} f(\gamma z)     
\end{eqnarray*}
where ${\rm j}(\gamma, z)=cz+d$.

Fix a character $\omega$ of $\dis{\left(\ringO / \n\right)^{\times}}$, and let $\omegatil$ be a character of $\A_F^\times/F^\times$ induced from $\omega$. (See Section~\ref{character} for induced characters.) Then, we define a character of $\K_0(\n)$ by $\omegatil\left(\left(\begin{array}{ll} a & b \\ c & d \end{array}\right)\right)=\omegatil(a)$. We put $\M_k(\Gamma_\nu(\n), \omegatil)$ to be the space of Hilbert modular forms of weight $k=(k_1, \cdots, k_n)$ with respect to $\Gamma_\nu(\n)$, with a character $\omegatil$, by which we mean a space of functions $f_\nu$ that are holomorphic on $\h^n$ and at cusps, and that satisfy $f||_k\gamma=\omegatil(\gamma)f$ for all $\gamma\in\Gamma_\nu(\n)$ considered as elements of $\GL_2(\R)^n$ on the left hand side. Let us note that it makes sense to apply $\omegatil$ to $\Gamma_\nu(\n)$. A function $f_\nu$ in $\M_k(\Gamma_\nu(\n), \omegatil)$ has a Fourier expansion of the form:
\begin{equation}\label{fourier}
 f_\nu(z)=\sum_{\xi}a_\nu(\xi)e^{2\pi i \xi z},
\end{equation}
where $e^{2\pi i \xi z}=\exp\left(2\pi i \sum_{j=1}^n \xi_j z_j\right)$, and $\xi$ runs through all the totally positive elements in $t_\nu\ringO$ and $\xi=0$.
A Hilbert modular form is called a cusp form if, for all $\gamma\in\GL_2^+(F)$, the constant term of $f||_k\gamma$ in its Fourier expansion is $0$, and the space of cusp forms with respect to $\Gamma_\nu(\n)$ is denoted as $\S_k(\Gamma_\nu(\n), \omegatil)$. To have nonempty spaces of cusp forms  
$\S_k(\Gamma_\nu(\n), \omegatil)$, assume henceforth that $k_j \geq 1$. (See, for example, Garrett~\cite[Theorem 1.7]{garrett}.)

Choose a function $f_\nu\in\M_k(\Gamma_\nu(\n), \omegatil)$ for each $\nu$, and put $\f=(f_1, \cdots, f_h)$. Using the decomposition given in (\ref{GL_decomp}), let us define
\begin{equation}
\label{eqn:f-tof-nu}
\f(\gamma x_\nu g_\infty k_0)=(f_\nu||_k g_\infty )({\bf i})\omegatil_f(k_0^\iota),
\end{equation}
where $\gamma\in\GL_2(F)$, $g_\infty\in\GL_2^+(F_\infty)$, $k_0\in\K_0(\n)$, ${\bf i}=(i, \cdots, i)$, and $\omegatil_f$ is a finite part of $\omegatil$. 
The space of such functions $\f$ will be denoted as $\M_k(\n, \omegatil)$. In particular, if $f_\nu\in\S_k(\Gamma_\nu(\n), \omegatil)$ for all $\nu$, then $\f$ is called a cusp form as well, and we write as $\S_k(\n, \omegatil)$ for the space of cusp forms in the ad\`elic setting, i.e., 
$$
\S_k(\n, \omegatil) \ = \ \bigoplus_{\nu = 1}^h \S_k(\Gamma_\nu(\n), \omegatil).
$$

For any integral ideal $\m$ in $F$, there exist a unique $\nu\in \{ 1, \cdots, h\} $ and a totally positive element $\xi$ in $F$ so that $\m=\xi t_\nu^{-1}\ringO$. Put ${\rm c}(\m, \f)=a_\nu(\xi)\xi^{-k/2}$ with $a_\nu(\xi)$ being a Fourier coefficient of $f_\nu$ given in (\ref{fourier}). This is well-defined because the right hand side of the expression is invariant under the totally positive elements in $\ringO^{\times}$. For our convenience, set ${\rm c}(\m, \f)=0$ if $\m$ is not integral. 	
	
	\subsubsection{Primitive form}\label{primitive_form} Let $\f$ be a cuspform of weight $k=(k_1, \cdots, k_n)$, level $\n$, with a character $\omegatil$. For each finite place $\p$, let $\varpi_\p$ be a uniformizer for $\ringO_\p$, and let us define the Hecke operator $\T_\p$ by
\[ (\T_\p\f)(g)=\int_{\K_\p(\n)} \f\left(g k_p\left( \begin{array}{cc} \varpi_\p & \\ & 1 \end{array} \right)\right)\omegatil^{-1}(k_\p)\, dk_\p.\]

Suppose that $\p$ does not divide either $\n$ or $\Dif$, then observe that $\K_\p(\n)=\GL_2(\ringO_\p)$, $\omegatil|_{\ringO_\p^\times}\equiv 1\!\!1$, and that $\f$ is right $\K_\p(\n)$-fixed. Therefore, it follows that
\[ (\T_\p\f)(g)=\int_{K_\p(\n)\left( \begin{array}{ll} \varpi_\p & \\ & 1\end{array} \right) \K_\p(\n)} \f(gh)\, dh.  \]
Furthermore, decomposing the double coset $\K_\p(\n)\left(\begin{array}{ll} \varpi_\p & \\ & 1 \end{array}\right)\K_\p(\n)$ as a disjoint union of right cosets,

\begin{equation}\label{right_coset}
 \K_\p(\n)\left( \begin{array}{ll} \varpi_\p & \\ & 1\end{array}\right) \K_\p(\n)=\left( \begin{array}{ll} 1 & \\ & \varpi_\p \end{array} \right) \K_\p(\n) \cup \left( \cup_{u\in \ringO / \p} \left( \begin{array}{ll} \varpi_\p & u \\ & 1 \end{array} \right) \K_\p(\n) \right). 
\end{equation}
Hence, $\T_\p\f$ can be also described as the finite sum 
\begin{equation}\label{hecke_sum}
 (\T_\p\f)(g)=\f\left( g \left( \begin{array}{ll} 1 & \\ & \varpi_\p \end{array}\right) \right)+\sum_{u\in \ringO / \p} \f\left( g\left( \begin{array}{ll} \varpi_\p & u \\ & 1 \end{array} \right)\right). 
\end{equation}

Now, we shall recall the definition of new forms from Shimura \cite{shimura-duke}.  Let $\m$ be an integral ideal that divides $\n$ and is divisible by the conductor of $\omegatil$, and ${\bf g}\in \S_k(\m, \omegatil)$. Let $\mathfrak{a}$ be an integral ideal dividing $\m^{-1}\n$ that is generated by an element $\alpha\in \A_F^\times$ with $\alpha_\infty=1$. Define ${\bf g}_{\mathfrak{a}}$ by the right translation of ${\rm N}(\mathfrak{a})^{-k_0/2}{\bf g}$ by $\left(\begin{array}{cc} \alpha^{-1} & \\ & 1 \end{array}\right)$. Such ${\bf g}_\mathfrak{a}$ is an element in 
$\S_k(\mathfrak{a}\m, \omegatil)$. The space $\S_k^{{\rm old}}(\n, \omegatil)$ generated by all such ${\bf g}_\mathfrak{a}$ is called the space of old forms. The space $\S_k^{{\rm new}}(\n, \omegatil)$ of new forms is defined to be the orthogonal complement of $\S_k^{{\rm old}}(\n, \omegatil)$ with respect to an inner product:
\[ \langle \f, \, {\bf g}\rangle:=\sum_{\nu=1}^h \frac{1}{\mu\left(\Gamma_\nu\backslash \h^n\right)} \int_{\Gamma_\nu\backslash\h^n} \overline{f_\nu(z)}g_\nu(z)y^k\, d\mu(z),\]
where $d\mu(z)=\prod_{j=1}^n \frac{dx_jdy_j}{y_j^2}$.

A Hilbert cusp form $\f$ in $\S_k(\n, \omegatil)$ is said to be primitive if it is a newform, a common eigenfunction of all the Hecke operators 
$\T_\p$, and normalized so that ${\rm c}(\ringO, \f)=1$.

	\subsubsection{Remarks on primitive forms} We introduced the Hecke operators $\T_\p$ for all the prime ideals $\p$ in the previous section. Now, define more generally the Hecke operators $\T_\m$ for any integral ideal $\m$. Let $\K=\GL_2^+(F_\infty)\cdot\K_0(\n)$, where $\K_0(\n)$ is as defined earlier. We also let
\[ {\rm Y}_\p:=\left\{ \left(\begin{array}{cc} a & b \\ c & d \end{array}\right) \in \GL_2(F_\p)\, : \, \begin{array}{cc}  a\ringO_\p+\n_\p=\ringO_\p, &  b\in\Dif_\p^{-1} \\ c\in\n_\p\Dif_\p, & d\in\ringO_\p \end{array} \right\},\]
and ${\rm Y}=\left(\GL_2(F_\infty)\cdot \prod_\p {\rm Y}_\p\right)\cap \GL_2(\A_F)$. 

The Hecke operator $\T_\m$ is given by $\T_\m=\sum_y \K y\K$ where the sum is taken over all the representatives $y$ of the double cosets $\K y \K$ 
with $y\in{\rm Y}$ satisfying $(\det y)\ringO=\m$. Noting that each summand $\K y\K$ can be written as a disjoint union $\K y \K=\cup_j \K y_j$ with the infinite part of $y_j$ being $1$, we define
\[ \left(\f| \K y\K\right)(g)=\sum_j \omega^\prime(y_j)^{-1}\f(gy_j^\iota),\]
where $\omega^\prime\left(\left(\begin{array}{cc} a & b \\ c & d \end{array}\right)\right)=\omega(a_\n \mod \n)$. This definition coincides with the integral definition for all the Hecke operators $\T_\p$ with respect to prime ideals $\p$.

Miyake proved that if two newforms $\f$ and ${\bf g}$ are common eigenfunctions for $\T_\p$ and share the same eigenvalues for almost all prime $\p$, then $\f$ and ${\bf g}$ are a constant multiple of each other. In particular, if they are normalized, we have $\f={\bf g}$. Furthermore, if a newform $\f$ is normalized and a common eigenfunction for $\T_\p$ for all $\p$ not dividing $\n$, then it is an eigenfunction for all $\T_\m$ and its eigenvalue is 
${\rm N}(\m){\rm c}(\m, \f)$. (See \cite{miyake} and \cite{shimura-duke}.)

Suppose $\f = (f_1,\dots,f_h)$ is a primitive form. One may ask whether $\f$ is determined by any one of its components $f_{\nu}$. In general this is not true. For example, take $\chi$ to be a non-trivial character of the narrow class group, and put ${\bf g} = \f \otimes \chi$, i.e., for any $x \in G(\A)$, 
${\bf g}(x) = \f(x) \chi({\rm det}(x))$. Using (\ref{eqn:f-tof-nu}) it is trivial to check that $g_1 = f_1$, however, in general $\f \neq {\bf g}$.  
After we prove the correspondence $\f \leftrightarrow \Pi(\f)$, it will follow that $\Pi(\f \otimes \chi) = \Pi(\f) \otimes \chi$, and so if $\Pi(\f)$ admits a self-twist, then the twisting character must be quadratic, and $\Pi(\f)$ has to come via automorphic induction from the corresponding quadratic extension of $F$, and in general this would not be the case for a given $\f$. On a related note, one can make an interesting observation based on a refined strong multiplicity one theorem due to Ramakrishnan \cite{ramakrishnan}: suppose, $\f$ and ${\bf g}$ are primitive forms, and suppose $f_\nu = g_\nu$ for all $\nu$ 
except, say, $\nu = \nu_0$. This means that $c(\p,\f) = c(\p, {\bf g})$ for all prime ideals $\p$ whose class in the narrow class group is not 
represented by $t_{\nu_0}^{-1}$, or in other words, $c(\p,\f) = c(\p, {\bf g})$ for all prime ideals $\p$ outside a set $S$ of finite places 
with Dirichlet density $1/h$. (See, for example, Koch \cite[Theorem 1.111]{koch}.)  It follows from Ramakrishnan's theorem that if the narrow class number is sufficiently large ($h > 8$ will do) then necessarily $\f = {\bf g}$.

	\subsubsection{Some notes on the various characters}\label{character} Fix a character $\omega$ of $\left(\ringO /\n\right)^\times$. We ``lift" it to a character $\omegatil$ of $\A_F^\times/F^\times$ as follows.
	Write $\A_F^\times/F^\times$ as a disjoint union
	\[ \A_F^\times/F^\times=\bigcup_{\nu=1}^h t_\nu F_{\infty^+}^\times\prod_{\p<\infty}\ringO_\p^\times, \]
where $\{t_\nu\}$ are taken to be a set of representatives of the narrow class group, and consider the following diagram where the row is exact:

\[\renewcommand{\arraystretch}{1.3}
\begin{array}{ccc} 1 \To & \frac{F^\times(F^\times_{\infty^+}\prod\ringO_\p^\times)}{F^\times} & \hookrightarrow \frac{\A_F^\times}{F^\times} \To \frac{\A^\times_F}{F^\times(F^\times_{\infty^+}\prod\ringO_\p^\times)} \To 1 \\
& \Big\downarrow & \\
& \frac{F^\times(F^\times_{\infty^+}\prod\ringO_\p^\times)}{F^\times\left(F^\times_{\infty^+}\prod(1+\p^{f_\p}\ringO_\p)\right)} & \\
& \downarrow & \\
& \left( \ringO/\n\right)^\times & \\
& \downarrow{\scriptstyle\omega} & \\
& \C^* &
\end{array}
\]
Here $f_\p$ is the highest power of $\p$ dividing $\n$. Using the column, a character $\omega$ of $ \left(\ringO/\n\right)^\times$ can be inflated up to a 
character, also denoted $\omega$, of $F^\times(F^\times_{\infty^+}\prod\ringO_\p^\times)/F^\times$. 
Denote this latter group tentatively by $H$, and observe that it is a subgroup of finite index inside the abelian group $G := \A_F^\times/F^\times$; the index is 
the narrow class number $h$. The representation $\dis{{\rm Ind}_H^{G}(\omega)}$ is a direct sum of $h$ characters, and we can take 
$\omegatil$ to be any such character. We will say that $\omegatil$ is a character of $\A_F^\times/F^\times$ which restricts to the 
character $\omega$ of $ \left(\ringO/\n\right)^\times$.

\subsection{Attaching a cuspidal automorphic representation}
\label{cusp_to_rep} 
Let $L_0^2(\GL_2(F)\backslash \GL_2(\A_F), \omegatil)$ be the space of functions on $\GL_2(\A_F)$ such that 
\begin{eqnarray*}
\phi(\gamma g)&=& \phi(g) \hskip 0.3in \text{for all} \, \gamma\in\GL_2(F), \\
\phi(zg) &=& \omegatil(z)\phi(g) \hskip 0.3in \text{for all}\, z\in\A_F^\times,
\end{eqnarray*}
$\phi$ is square integrable modulo the center, and $\phi$ satisfies the cuspidality condition
\[\int_{F^\times\backslash\A_F^\times} \phi\left(\left( \begin{array}{cc} 1 & x \\ & 1 \end{array}\right) g\right)\,dx=0 \]
for almost all $g$ in $\GL_2(\A_F)$. The regular representation of $\GL_2(\A_F)$ on the space $L_0^2(\GL_2(F)\backslash \GL_2(\A_F), \omegatil )$ 
will be denoted $\rho_0^{\omegatil}$. 

Let $\f$ be a primitive holomorphic Hilbert cusp form of weight $k=(k_1, \cdots, k_n)$, level $\n$, with a Hecke character $\omegatil$. Let $\H(\f)$ be a space spanned by right translations of $\f$ under $\GL_2(\A_F)$. Then the resulting representation $\Pi(\f)$ on this space $\H(\f)$ occurs in the regular representation $\rho_0^{\omegatil}$ on the cusp forms. The goal of this section is to prove the following theorem.

\begin{thm}\label{attaching_rep}
With the notions above, the representation $\Pi(\f)$ on the space $\H_\f$ is irreducible. Furthermore, the local representation $\Pi_{\eta_j}$ at each archimedean place $\eta_j$ is the discrete series representation $D_{k_j-1}$ of lowest weight $k_j$.
\end{thm}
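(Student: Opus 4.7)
My plan is to separate the two assertions: first identify the archimedean components of any irreducible constituent of $\Pi(\f)$ as the discrete series of the required weight, and then use Hecke-eigenvalue rigidity together with multiplicity one to force $\Pi(\f)$ to be irreducible.

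For the archimedean picture, I would start from the defining formula (\ref{eqn:f-tof-nu}). Fix an infinite place $\eta_j$ and write $r(\theta_j) \in \SO(2)$ for the standard rotation at that place. The automorphy of $f_\nu$ under $\Gamma_\nu(\mathfrak{n})$ together with the slash-action $\|_k$ shows that $\j(r(\theta_j),{\bf i}) = e^{i\theta_j}$, so right translation of $\f$ by $r(\theta_j)$ multiplies $\f$ by $e^{-ik_j\theta_j}$. Thus $\f$ is a $K_\infty$-vector of weight $(-k_1,\dots,-k_n)$. Next, holomorphy of each $f_\nu$ in the variable $z_j \in \mathfrak{h}$ translates, via the Cauchy--Riemann equations, into annihilation of $\f$ by the antiholomorphic lowering element $L_j^- \in \gl_2(F_{\eta_j}) \otimes \C$ that lowers the $\SO(2)$-weight by $2$. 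The central character at $\eta_j$ is pinned down by $\omegatil_\infty$, and together with the lowest-weight condition this identifies the $(\gl_2(F_{\eta_j}),{\rm SO}(2) Z_2(\R)^0)$-module generated by $\f$ as the unique irreducible lowest-weight module of lowest $\SO(2)$-weight $k_j$, namely $D_{k_j-1}$ in the normalization of \ref{sec:llc-gl2r}. Taking the tensor product across the archimedean places shows that every irreducible $(\g_\infty,K_\infty^0)$-subquotient of $\H(\f)$ has archimedean component $\bigotimes_{j=1}^n D_{k_j-1}$.

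For the irreducibility, since $\f$ is a cusp form, $\H(\f)$ lies inside $L_0^2(\GL_2(F)\backslash\GL_2(\A_F),\omegatil)$, on which the regular representation decomposes as a Hilbert direct sum of irreducible unitary cuspidal representations. Decompose $\H(\f) = \bigoplus_i V_{\Pi_i}$ into irreducible constituents; projecting $\f$ to each summand yields a nonzero vector $\f_i \in V_{\Pi_i}$ (for those $i$ for which the projection is nonzero, which are all of them since the $\Pi_i$ are generated by those projections). By the discussion in \ref{primitive_form}, $\f$ is a simultaneous eigenform for $T_\p$ with eigenvalue $\norm(\p)\,{\rm c}(\p,\f)$ for every $\p \nmid \mathfrak{n}\Dif$, so each $\f_i$ is an eigenvector for the unramified spherical Hecke algebra with exactly the same eigenvalues. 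The standard translation from Hecke eigenvalues to Satake parameters (using (\ref{right_coset}) and (\ref{hecke_sum})) shows that all the $\Pi_i$ share the same unramified local components at every $\p \nmid \mathfrak{n}\Dif$.

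The conclusion then follows from strong multiplicity one for cuspidal automorphic representations of $\GL_2$, which forces all the $\Pi_i$ to be globally isomorphic, combined with multiplicity one, which forces any isomorphism class of cuspidal representation to occur at most once in $L_0^2$. Hence the index set consists of a single element, and $\Pi(\f)$ is irreducible. The main technical obstacle I expect is Step one: carefully verifying that the holomorphy condition on the $f_\nu$ translates, uniformly across the $h$ class-group representatives, into annihilation of $\f$ by the correct lowering element at each archimedean place, and that the resulting weight and central-character data indeed single out $D_{k_j-1}$ rather than, say, a nonunitary principal series or the finite-dimensional quotient appearing in the exact sequence (\ref{eqn:exact-seq-dsr}). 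The latter possibilities are excluded using that $\f$ is a cusp form (so its archimedean type must be infinite-dimensional and generic), which rules out the finite-dimensional Langlands quotients automatically.
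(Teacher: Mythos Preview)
Your proposal is correct and, for the irreducibility half, essentially identical to the paper's argument: decompose $\H(\f)$ into irreducible cuspidal summands, project $\f$ to each, observe that the projections inherit the Hecke eigenvalues at all $\p\nmid\n\Dif$, read off the Satake parameters (together with the central character $\omegatil$) to conclude that the unramified local components agree, and finish with strong multiplicity one plus multiplicity one.

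The archimedean step is where you and the paper diverge, though only mildly. You identify each $\Pi_{\eta_j}$ by exhibiting $\f$ (or its projection to an irreducible summand) as a vector of $\SO(2)$-weight $-k_j$ annihilated by the lowering element $L_j^-$, i.e.\ a lowest-weight vector, and then invoke the classification of irreducible $(\gl_2,\O(2))$-modules with that data. The paper instead computes the Casimir eigenvalue $\lambda_j=\tfrac{k_j}{2}(1-\tfrac{k_j}{2})$ directly from the Iwasawa decomposition and identifies $\Pi_{\eta_j}$ via its infinitesimal character and central character. Both routes require excluding the finite-dimensional Langlands quotient, and both do so by appealing to genericity of cuspidal representations. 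Your approach is arguably more hands-on (it uses holomorphy of the $f_\nu$ via Cauchy--Riemann directly rather than through a second-order operator); the paper's is slightly more uniform in that the Casimir eigenvalue is a single scalar invariant. One small presentational point: your archimedean paragraph speaks of ``the $(\g_\infty,K_\infty^0)$-module generated by $\f$'' before decomposing into irreducibles, but the clean statement is really about each irreducible summand---you should project $\f$ first (as you do later for the Hecke argument) and then observe that the projection remains a lowest-weight vector of the correct type, which then pins down the archimedean component of that summand.
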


To prove the first part of this theorem, let us recall some important theorems regarding automorphic representations. 
(See, for example, Cogdell \cite{cogdell}.)

\begin{thm}[Multiplicity One Theorem]
\label{mult_one}
The representation $\rho_0^{\omegatil}$ decomposes as the direct sum of irreducible representations, each of which appear with multiplicity one.
\end{thm}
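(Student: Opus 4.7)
The plan is to establish the two assertions separately: first the discrete decomposition of $\rho_0^{\tilde\omega}$, and then the multiplicity one statement via uniqueness of Whittaker models.

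For the discrete decomposition, I would invoke the classical argument of Gelfand--Piatetski-Shapiro. The key point is that a cuspidal automorphic form on $\GL_2(F)\backslash \GL_2(\A_F)$ is rapidly decreasing on Siegel sets (once one quotients by the center via the central character $\tilde\omega$). Consequently, for any smooth compactly supported function $f$ on $\GL_2(\A_F)/Z(\A_F)$ transforming by $\tilde\omega^{-1}$, the convolution operator $\rho_0^{\tilde\omega}(f)$ acting on $L_0^2(\GL_2(F)\backslash \GL_2(\A_F),\tilde\omega)$ is of Hilbert--Schmidt type (in fact compact), because its kernel $K_f(x,y)=\sum_{\gamma\in Z(F)\backslash \GL_2(F)}f(x^{-1}\gamma y)$ is square integrable on the product of two fundamental domains thanks to the rapid decay. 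The spectral theorem for a commuting family of such compact self-adjoint operators then forces a Hilbert-space direct sum decomposition of $L_0^2$ into irreducible unitary subrepresentations of $\GL_2(\A_F)$.

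For multiplicity one, suppose $(\pi,V_\pi)$ is an irreducible cuspidal automorphic representation and that $\iota_1,\iota_2 : V_\pi \hookrightarrow L_0^2(\GL_2(F)\backslash \GL_2(\A_F),\tilde\omega)$ are two embeddings. To each $\phi\in V_\pi$ and each embedding $\iota_i$ attach the Whittaker function
$$W^{(i)}_\phi(g)=\int_{F\backslash \A_F}\iota_i(\phi)\!\left(\begin{pmatrix}1 & x \\ & 1\end{pmatrix}g\right)\overline{\psi(x)}\,dx,$$
which realises $\pi$ in its global Whittaker model (Theorem~\ref{whittaker}). Each $W^{(i)}$ factors through the same underlying Whittaker functional on $V_\pi$, and by the local uniqueness of Whittaker models (Theorem~\ref{local_whittaker}) at every place, the space of continuous Whittaker functionals on $V_\pi$ is at most one-dimensional. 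Hence there exists a scalar $c\in \C$ with $W^{(2)}_\phi = c\, W^{(1)}_\phi$ for all $\phi\in V_\pi$.

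To conclude, I would recover $\iota_i(\phi)$ from $W^{(i)}_\phi$ via the cuspidal Fourier expansion: for any cusp form $\varphi$ on $\GL_2(\A_F)$ the constant term along $N$ vanishes, so
$$\varphi(g)\;=\;\sum_{\xi \in F^{\times}} W_\varphi\!\left(\begin{pmatrix}\xi & \\ & 1\end{pmatrix}g\right),$$
where $W_\varphi$ is the $\psi$-Whittaker coefficient of $\varphi$. Applying this to $\iota_i(\phi)$ yields $\iota_2(\phi) = c\,\iota_1(\phi)$ for every $\phi$, so the two embeddings are proportional, proving multiplicity one. The main obstacle is not conceptual but rather verifying the hypotheses cleanly: one must check that the cuspidal Fourier expansion is valid in our ad\`elic setting with the chosen central character, and that local uniqueness of Whittaker models holds at the archimedean places of $F$ (which is classical, due to Shalika for non-archimedean and to Shalika--Wallach for archimedean places over $\R$). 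Once these standard inputs are granted, the argument above is essentially formal.
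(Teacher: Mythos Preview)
Your proof sketch is correct and follows the standard argument (Gelfand--Piatetski-Shapiro compactness for the discrete decomposition, then uniqueness of Whittaker models for multiplicity one). Note, however, that the paper does not prove this theorem at all: it is stated as one of three background results (alongside the Tensor Product Theorem and Strong Multiplicity One) and simply attributed to the literature, with a reference to Cogdell's lectures. So there is no ``paper's own proof'' to compare against; your outline is precisely the kind of argument one finds in the cited references.
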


\begin{thm}[Tensor Product Theorem]\label{tensor_prod}
Let $(\Pi, V_\Pi)$ be an automorphic representation of $\GL_2(\A_F)$. Then $\Pi$ is the restricted tensor product of the local representations $\Pi_v$, where $v$ runs through all the places of $F$, and each $\Pi_v$ is an irreducible admissible representation of $\GL_2(F_v)$.
\end{thm}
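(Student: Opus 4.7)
The plan is to deduce this as a special case of Flath's theorem: any irreducible admissible representation of a restricted product of locally compact groups, with respect to distinguished open compact subgroups $K_v$, factors as a restricted tensor product of irreducible admissible local representations, provided the Hecke algebra of bi-$K_v$-invariant functions is commutative for almost all $v$. For $G = \GL_2$ over $F$, taking $K_v = \GL_2(\ringO_v)$ at the non-archimedean places provides exactly this structure, and the Satake isomorphism gives the required commutativity of spherical Hecke algebras. Thus my plan has three stages: (i) establish admissibility of $\Pi$, (ii) fix the unramified data needed to define the restricted tensor product, and (iii) carry out the algebraic tensor-product decomposition.

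First I would establish admissibility of $\Pi$, viewed as a module over the global Hecke algebra
\[
\H \;=\; \H_\infty \,\otimes\, \H_f, \qquad \H_f \;=\; \bigotimes_{v \nmid \infty}{}' \,\H(\GL_2(F_v), K_v),
\]
where the restricted tensor product on the finite part is with respect to the idempotents $e_v^0 = {\rm vol}(K_v)^{-1} \mathbf{1}_{K_v}$. Admissibility of $\Pi$ as a $(\g_\infty, K_\infty) \times \GL_2(\A_{F,f})$-module follows from Harish--Chandra's finiteness theorem for automorphic forms: for every open compact $K_f \subset \GL_2(\A_{F,f})$ and every irreducible $K_\infty$-type, the corresponding isotypic subspace of $K_f$-fixed automorphic forms is finite-dimensional. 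Since by definition $\Pi$ is an irreducible subquotient of this space, it inherits admissibility.

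Next, I would pick a finite set $S \supseteq S_\infty$ outside of which $V_\Pi^{K_v} \neq 0$; such an $S$ exists because any nonzero smooth vector in $V_\Pi$ is fixed by some open compact subgroup, combined with admissibility of $\Pi_f$. Schur's lemma applied to the commutative spherical Hecke algebra $e_v^0\,\H(\GL_2(F_v), K_v)\,e_v^0$ forces $V_\Pi^{K_v}$ to be one-dimensional for each $v \notin S$; fix a generator $\xi_v^0$ of each such line. These vectors provide the ``base points'' for the restricted tensor product decomposition to come.

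The main obstacle is the algebraic step, i.e., Flath's theorem itself. The strategy is an induction on finite sets $S \supseteq S_\infty$: form $V^S := V_\Pi \cdot \prod_{v \notin S} e_v^0$ and show by admissibility that it is a nonzero $\bigotimes_{v \in S}\H_v$-module which is irreducible, the irreducibility because any proper submodule would generate a proper $\H$-submodule of $V_\Pi$ upon applying all of $\H$. Then one invokes the tensor product theorem for irreducible admissible modules over a finite product of Hecke algebras to get $V^S \cong \bigotimes_{v \in S} \Pi_v^{S}$. Passing to the direct limit over increasing $S$ and glueing via the chosen spherical vectors $\xi_v^0$ for $v \notin S$ produces $\Pi \cong \bigotimes'_v \Pi_v$. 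Irreducibility of each $\Pi_v$ is then automatic, since a proper $\H_v$-submodule would, upon tensoring with the other local factors, give a proper $\H$-submodule of $\Pi$; and the archimedean components are interpreted in the standard way as irreducible admissible $(\g_v, K_v^0)$-modules, i.e., Harish--Chandra modules for $\GL_2(\R)$.
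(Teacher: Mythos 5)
The paper does not prove this theorem at all: it is quoted as a known result with a pointer to Cogdell's lectures (ultimately Flath's theorem), and is then simply \emph{used} to write $\Pi(\f)=\oplus_i\Pi^i$ with each $\Pi^i$ a restricted tensor product. So your proposal is not an alternative to anything in the paper; it is a sketch of the standard proof that the citation hides. As such, the architecture is right and is exactly the classical argument: admissibility via Harish--Chandra's finiteness theorem, commutativity of the spherical Hecke algebras $e_v^0\,\mathcal{H}(\GL_2(F_v),K_v)\,e_v^0$ (Satake, or the Gelfand involution $g\mapsto {}^t g$), irreducibility of $e_SV_\Pi$ over $\bigotimes_{v\in S}\mathcal{H}_v$, the finite tensor-product decomposition for irreducible admissible modules over a finite tensor product of idempotented algebras, and the direct limit over $S$ glued along spherical vectors, with the archimedean factors read as Harish--Chandra modules.

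One step is misstated in a way you should repair. You write that Schur's lemma applied to the commutative algebra $e_v^0\,\mathcal{H}(\GL_2(F_v),K_v)\,e_v^0$ forces $V_\Pi^{K_v}$ to be one-dimensional. That is false for the \emph{global} fixed space: $V_\Pi^{K_v}$ is an irreducible module over $e_v^0\mathcal{H}e_v^0$, which contains the full Hecke algebras at all places $w\neq v$ and is far from commutative, so $V_\Pi^{K_v}$ is in general infinite-dimensional. The correct statement is that $e_v^0\mathcal{H}_ve_v^0$ is \emph{central} in $e_v^0\mathcal{H}e_v^0$, hence by the Dixmier--Schur lemma (for irreducible modules of countable dimension over $\C$) it acts on $V_\Pi^{K_v}$ through a character; equivalently, after the factorization is in hand, the \emph{local} space $\Pi_v^{K_v}$ is one-dimensional, and that line is where your base point $\xi_v^0$ lives. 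This same centrality-plus-character statement is also what you implicitly need to make your irreducibility argument for $V^S=e_SV_\Pi$ close up: a proper $\bigotimes_{v\in S}\mathcal{H}_v$-submodule $W$ generates $\mathcal{H}W=V_\Pi$, and you recover $e_S\mathcal{H}W=W$ only because the spherical algebras outside $S$ act on $V^S$ by scalars. With that substitution the sketch is the standard, correct proof.
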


\begin{thm}[Strong Multiplicity One Theorem]\label{strong_mult}
Let $(\Pi, V_\Pi)$ and $(\Pi^\prime, V_\Pi^\prime)$ be irreducible admissible constituents of the regular representation of $\GL_2$ on the cusp forms. If $\Pi_v$ is equivalent to $\Pi_v^\prime$ for almost all non-archimedean places $v$, then $\Pi\isom\Pi^\prime$.
\end{thm}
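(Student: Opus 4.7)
\medskip

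\noindent\textbf{Proof proposal.} The plan is to combine the Whittaker model apparatus (Theorems~\ref{local_whittaker} and~\ref{whittaker}) with the Rankin--Selberg Euler product in order to reduce Strong Multiplicity One to the identification of the local components at a finite residual set of places. Let $S \subset \{v\,:\,v\nmid\infty\}$ be the finite set where $\Pi_v \not\isom \Pi'_v$. Since $\Pi$ and $\Pi'$ are cuspidal, each admits a global Whittaker model which, by local uniqueness in Theorem~\ref{local_whittaker}, factors as a restricted tensor product of local Whittaker models; together with the Tensor Product Theorem~\ref{tensor_prod}, this means the abstract isomorphism class of $\Pi$ is determined by the family $\{\Pi_v\}_v$. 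By Multiplicity One (Theorem~\ref{mult_one}), proving the abstract equivalence $\Pi \isom \Pi'$ suffices to conclude that $V_\Pi = V_{\Pi'}$ inside the cuspidal spectrum. The task therefore reduces to matching the local components at each $v \in S \cup S_\infty$.

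To accomplish this, I would introduce the Rankin--Selberg convolution $L(s, \Pi \times \widetilde{\Pi'})$, where $\widetilde{\Pi'}$ denotes the contragredient, and appeal to the Jacquet--Shalika theorem: this completed $L$-function extends meromorphically to $\C$, is holomorphic in a neighborhood of $s=1$ in general, and develops a simple pole at $s=1$ precisely when $\Pi \isom \Pi'$. The hypothesis yields an equality of partial Euler products
$$
L^S(s, \Pi \times \widetilde{\Pi'}) \ = \ L^S(s, \Pi \times \widetilde{\Pi}),
$$
and the right-hand side inherits a pole at $s=1$ from self-convolution. Since every local $L$-factor at $v \in S \cup S_\infty$ is holomorphic and nonzero in a small neighborhood of $s=1$, the pole persists in the completed $L$-function $L(s, \Pi \times \widetilde{\Pi'})$, forcing $\Pi \isom \Pi'$ and completing the argument.

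The hard part will be the invocation of the Jacquet--Shalika pole characterization, which is a substantial global input lying outside the Whittaker preliminaries recorded in Section~\ref{sec:prelims}; in particular one must control the behavior of the local Rankin--Selberg $L$- and $\varepsilon$-factors at the bad finite places $v \in S$ and at the archimedean places well enough to transfer analytic information between the partial and completed $L$-functions. A more self-contained alternative, following Piatetski--Shapiro, would work directly with the Fourier expansion
$$
\phi(g) \ = \ \sum_{\alpha \in F^{\times}} W_\phi\!\left(\begin{pmatrix} \alpha & 0 \\ 0 & 1 \end{pmatrix} g\right)
$$
applied to carefully chosen decomposable cusp forms $\phi \in V_\Pi$ and $\phi' \in V_{\Pi'}$ whose local Whittaker components agree outside $S \cup S_\infty$; one then argues, using local uniqueness of Whittaker models and irreducibility, that the remaining local Whittaker spaces must coincide as well. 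Either route pivots on the principle that the local Whittaker model determines its local representation.
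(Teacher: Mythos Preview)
The paper does not prove this theorem at all: it is quoted as a black box from the literature, with the parenthetical ``See, for example, Cogdell \cite{cogdell}'' serving as the only justification. So there is no ``paper's own proof'' to compare against; your proposal goes well beyond what the paper attempts.

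That said, your sketch is a faithful outline of how Strong Multiplicity One is established in the cited sources. The Rankin--Selberg route via the Jacquet--Shalika pole criterion is the standard modern proof (and is exactly what Cogdell presents), and your identification of the delicate point---controlling the local factors at the bad and archimedean places so that the pole transfers from the partial to the completed $L$-function---is accurate. The alternative Piatetski--Shapiro argument you mention is historically the original one for $\GL_2$ and is more elementary but less portable to $\GL_n$. Either would be appropriate if the paper had intended to include a proof; it simply chose not to.
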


Theorem~\ref{mult_one} and Theorem~\ref{tensor_prod} guarantee that $\Pi(\f)$ can be written as 
$\dis{\Pi(\f)=\oplus_i \Pi^i}$, with each irreducible constituent $\Pi^i$ being a restricted tensor product of local representations $\Pi_v^i$. Therefore, in order to show that $\Pi(\f)$ is irreducible, it is now enough to show that $\Pi_v^i\isom\Pi_v^j$ for almost all non-archimedean places $v$ and for all $i$ and $j$ by Theorem~\ref{strong_mult}. Write $\f=\oplus_i\f^i$ with each $\f^i$ in the space of $\Pi^i$. Now consider an irreducible constituent $\Pi^i$.

Let $\p$ be a prime ideal of $F$ not dividing either $\n$ or the different $\Dif$. For such an ideal $\p$, $\Pi_\p^i$ is a spherical representation 
$\pi(\chi_{1, \p}, \chi_{2, \p})$ induced from some unramified characters $\chi_{1, \p}$ and $\chi_{2, \p}$. (We will work with only normalized parabolic induction.) Since $\f$ is an eigenfunction of $\T_\p$, so is $\f^i$, since the projection from $\Pi(\f)$ to the $i$-th coordinate in $\oplus_i \Pi^i$ is a 
Hecke-equivariant map. Furthermore, it can be seen that the eigenvalue is $q_\p^{1/2} \left( \chi_{1, \p}(\varpi_\p)+\chi_{2, \p}(\varpi_\p)\right)$, where $q_\p$ is the cardinality of the residue field $\ringO_\p / \p\ringO_\p$, and $\varpi_\p$ is a uniformizer. Indeed, applying $g=1$ in (\ref{hecke_sum}), we obtain that 

\begin{eqnarray}
(\T_\p\f^i)(1)&=& \f^i\left(\begin{array}{ll} 1 & \\ & \varpi_\p\end{array}\right)+ \sum_{u\in\ringO_\p}\f^i\left(\begin{array}{ll} \varpi_\p & u \\ & 1 \end{array}\right) \label{hecke_ope} \\
&=&  q_\p^{1/2} \left( \chi_{1, \p}(\varpi_\p)+\chi_{2, \p}(\varpi_\p)\right) \f^i(1). \nonumber
\end{eqnarray}
This shows that $\chi_{1, \p}(\varpi_\p)+\chi_{2, \p}(\varpi_\p)=q_\p^{1/2}{\rm c}(\p, \f)$, and that, together with $\chi_{1, \p}\chi_{2, \p}$ being the central character of $\Pi_\p^i$, the characters $\chi_{1, \p}$ and $\chi_{2, \p}$ are uniquely determined by $\f$ and they are independent of $i$. Hence 
$\Pi_\p^i = \Pi_\p^j$ for almost all $\p$. Strong multiplicity one implies that $\Pi^i \isom \Pi^j$, and multiplicity one will imply that $\Pi(\f)$ is irreducible, 
This completes the proof for the first part of Theorem~\ref{attaching_rep}.

For archimedean places, note that the local representation $\Pi_{\eta_j}$ at each place $\eta_j$ is a $(\mathfrak{gl}(2), \O(2))$-module, so it is enough to consider the eigenvalue $\lambda_j$ for the Casimir operator $\Delta_j=-y_j^2\left(\frac{\partial^2}{\partial x_j^2}+\frac{\partial^2}{\partial y_j^2}\right)-y_j\frac{\partial^2}{\partial x_j\partial \theta_j}$. Since $\Delta_j$ acts on $\f$ as a function on $\GL_2(F_{\eta_j})$, we only need to see the action on $(f_\nu||_k g_j)({\bf i})$ for each $\nu$. Writing $g_j= \left( \begin{array}{cc} y_j^{1/2} & x_j y_j^{-1/2} \\ & y_j^{-1/2} \end{array} \right) \left( \begin{array}{cc} \cos \theta_j & -\sin\theta_j \\ \sin\theta_j & \cos\theta_j \end{array} \right)$, a direct computation shows that $\lambda_j=\frac{k_j}{2}\left(1-\frac{k_j}{2}\right)$ for any $\nu$. 
An irreducible admissible infinite-dimensional representation of $\GL_2(\R)$, with infinitesimal character determined by 
$\frac{k_j}{2}\left(1-\frac{k_j}{2}\right)$ and central character trivial on $\R_{>0}$ has to be the discrete series representation $D_{k_j-1}$.  
This says that $\Pi_{\eta_j} = D_{k_j-1}$. (Infinite-dimensionality of $\Pi_{\eta_j}$ is guaranteed by the existence of  Whittaker models.)

\subsection{Retrieving a Hilbert modular form from a representation}
\label{rep_to_cusp} Let $(\Pi, V_\Pi)$ be a cuspidal automorphic representation with the central character $\omega$ that is trivial on $F_{\infty+}^{\times}$, 
and such that the representation at infinity is equivalent to $\dis{\otimes_{j=1}^n D_{k_j-1}}$, where $D_{k_j-1}$ is a discrete series representation of the lowest weight $k_j$. Let the conductor of $\Pi$ be $\n$. We note that, for any non-archimedean place $v$ not dividing $\n$, the local representation $\Pi_v$ is equivalent to a spherical representation induced from some unramified character $\chi_{1, v}\otimes \chi_{2, v}$. In order to retrieve a primitive holomorphic Hilbert cusp form from this representation, it is quite useful to consider a Whittaker model of $\Pi$.

Recall from \ref{sec:additive-character} our non-trivial additive character $\psi$ of $\A /F$, and write $\psi_v$ for $v$-component of this character. The isomorphism between the representation space $V_\Pi$ and the Whittaker space $\Whit(\Pi, \psi)$ allows us to determine a unique holomorphic Hilbert cusp form that corresponds to $\Pi$ by choosing a suitable element from each local Whittaker model $\Whit(\Pi_v, \psi_v)$. For almost all $v$, 
$W_v\in \Whit(\Pi_v, \psi_v)$ is a spherical element, and is normalized so that $W_v(k_v)=1$ for all $k_v\in\GL_2(\ringO_v)$. 
The choices for the local vectors should be made in the following manner.

For each archimedean place $v=\eta_j$, let $\W_{\eta_j}$ be the lowest weight vector in $\Whit(\Pi_{\eta_j}, \psi_{\eta_j})$. 
By the lowest weight vector, we shall mean the element given as follows:
\[ 
\W_{\eta_j}\left(\left(\begin{array}{cc} a & \\ & a \end{array}\right)
\left(\begin{array}{cc} 1 & x \\ & 1 \end{array}\right) 
\left(\begin{array}{cc} \cos\theta & -\sin\theta \\ \sin\theta & \cos\theta \end{array}\right)
\left(\begin{array}{cc} y & \\ & 1 \end{array}\right)\right)=\omega_{\eta_j}(a)\psi_{\eta_j}(x)e^{-ik_j\theta}e^{-2\pi y}. 
\]

For a non-archimedean place $\p$, a suitably normalized $\K_\p(\n)$-fixed vector needs to be chosen, 
where $K_{\p}(\n)$ is an open compact subgroup of $\GL_2(F_\p)$ defined in (\ref{compact_subgp}). 
For this purpose, let $f$ and $r$ be the highest powers of $\p$ that divide $\n$ and the different $\Dif_F$, 
respectively. It is clear that $\K_\p(\n)$ can be written as
\[ \K_\p(\n)=\left(\begin{array}{ll} \varpi_\p^{-r} & \\ & 1\end{array}\right)\Gamma_0(\p^f)\left(\begin{array}{ll} \varpi_\p^r & \\ & 1 \end{array}\right), \]
where $\Gamma_0(\p^f)=\left\{\left(\begin{array}{ll} a & b \\ c& d \end{array}\right)\in\GL_2(\ringO_\p) \, : \, c\equiv 0 \,\, \text{mod} \,\, \p^f \right\}$. 
Let $W_\p^{\rm new}$ be the new vector in $\Whit(\Pi_\p, \psi_\p)$, i.e., $W_\p^{\rm new}$ is an element such that 
$\left(\begin{array}{ll} a & b \\ c & d \end{array}\right) \cdot W_\p^{\rm new}=\omega_\p(d)W_\p^{\rm new}$ 
for all $\left(\begin{array}{ll} a & b \\ c& d\end{array}\right)\in\Gamma_0(\p^f)$, and normalized in a way specified below. 
Define $\W_\p= \left(\begin{array}{ll} \varpi_\p^{-r} & \\ & 1\end{array}\right) \cdot W_\p^{\rm new}$. 
Then $\W_\p$ is an ``almost" $\K_\p(\n)$-fixed vector. (Note: this $\W_\p$ is slightly different from the $\W_\p$ of \ref{sec:normalized-newvector}.)
In particular, 
\begin{equation}\label{newvector}
\W_\p\left(\begin{array}{ll} 1 & \\ & 1 \end{array}\right)=W_\p^{\rm new} \left(\begin{array}{ll} \varpi_\p^{-r} & \\ & 1\end{array}\right). 
\end{equation}

We claim that the right hand side of the above expression is not zero for any $\p$, and hence $\W_\p$ can be normalized so that $\W_\p(1)=1$.
In order to show that our claim is true, we first assume that $r=0$, and hence the conductor of $\psi_\p$ is $\ringO_\p$. 
Pass $\W_\p=W_\p$ to the new vector $\kappa_\p$ in the Kirillov model $\mathcal{K}(\Pi_\p, \psi_\p)$ with respect to 
the same additive character $\psi_\p$, by defining $\kappa_\p(x)=\W_\p\left(\begin{array}{ll} x & \\ & 1 \end{array}\right)$, 
and observe $\kappa_\p(1)\neq 0$. (\cite[Section 2.4]{schmidt}) The isomorphism between the Whittaker model 
$\Whit(\Pi_\p, \psi_\p)$ and the Kirillov model $\mathcal{K}(\Pi_\p, \psi_\p)$ 
guarantees that $\W_\p\left(\begin{array}{ll} 1 & \\ & 1 \end{array}\right)\neq 0$. Normalize this vector, (and call it $\W_\p$ again), so that $\W_\p(1)=1$.

Next, let $r>0$. Let $\psi_{\p, \varpi_\p^{-r}}$ be an additive character defined by $\psi_{\p, \varpi_\p^{-r}}(x):=\psi_\p(\varpi_\p^{-r}x)$. Then since the conductor of $\psi_{\p, \varpi_\p^{-r}}$ is $\ringO_\p$, the same argument as above applies to show that 
$W_{\p, \varpi_\p^{-r}}(1)\neq 0$ where $W_{\p, \varpi_\p^{-r}}$ is the new vector in a Whittaker model with respect to $\psi_{\p, \varpi_\p^{-r}}$. Observing that
\[ W_{\p, \varpi_\p^{-r}}\left(\begin{array}{ll} 1 & \\ & 1 \end{array}\right)=W_\p\left(\begin{array}{ll} \varpi_\p^{-r} & \\ & 1 \end{array}\right),\]
the same normalization can be done in this case as well.

Let $\W=\otimes_v \W_v$, which is an element of $\Whit(\Pi, \psi)$. Then there is a corresponding element $\f$ in $V_\Pi$ by the 
usual isomorphism $V_{\Pi} \to \Whit(\Pi,\psi)$. The vectors $\f$ and $\W$ are related by
\begin{equation}
\label{fourier_expansion}
\f(g)=\sum_{\alpha\in F^{\times}} \W \left(\left(\begin{array}{ll} \alpha & \\ & 1 \end{array} \right)g \right).
\end{equation}
Furthermore, we claim that, in the above expression, $\alpha$ only runs through totally positive elements in $F$. To see this, put $g=\left(\begin{array}{ll} y_\infty & \\ & 1 \end{array}\right)$ where $y_\infty$ is an element of $\A$ whose finite part is $1$. Then it is easy to see that $\W\left(\begin{array}{ll} y_\infty \alpha & \\ & 1 \end{array}\right)=e^{-2\pi y_\infty\alpha}\prod_{v<\infty} \W_v\left(\begin{array}{ll} \alpha & \\ & 1\end{array}\right)$ must be zero unless $\alpha$ is totally positive for the summation to be bounded. Hence the Fourier expansion of $\f$ simplifies to:

\begin{equation}\label{W_expansion}
\f(g)=\sum_{\alpha\in F_+^\times}\W \left(\left(\begin{array}{ll} \alpha & \\ & 1 \end{array} \right)g \right).
\end{equation}

The rest of the section will be devoted to show that $\f$ is the desired Hilbert modular form. 

\begin{thm}
Let $\mathcal{A}_0(k, \n, \omegatil)$ be a subspace of $\mathcal{A}_{{\rm cusp}}(\GL_2(F)\backslash\GL_2(\A), \omegatil)$ that consists of elements satisfying the following properties.
	\begin{enumerate}
	\item $\phi(gr(\theta))=e^{-ik\theta}\phi(g)$ where $r(\theta):=\left\{\left(\begin{array}{lr} \cos\theta_j & -\sin\theta_j \\ \sin\theta_j & \cos\theta_j \end{array}\right)\right\}_j \in \SO(2)^n$,
	\item $\phi(gk_0)=\omegatil_f(k_0^\iota)\phi(g)$, where $\omegatil_f$ is the finite part of $\omegatil$ and $k_0\in\K_0(\n)$, and
	\item $\phi$ is an eigenfunction of the Casimir element $\Delta:=(\Delta_1, \cdots, \Delta_n)$ as a function of $\GL_2(\R)^n$, with its eigenvalue $\lambda=\prod_{j=1}^n \frac{k_j}{2}\left(1-\frac{k_j}{2}\right)$.
	\end{enumerate}
Then $\mathcal{A}_0(k, \n, \omegatil)$ is isomorphic to $\S_k(\n, \omegatil)$.
\end{thm}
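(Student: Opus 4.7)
The plan is to exhibit explicit mutually inverse maps
\[ \Phi: \S_k(\n, \omegatil) \longrightarrow \mathcal{A}_0(k, \n, \omegatil), \qquad \Psi: \mathcal{A}_0(k, \n, \omegatil) \longrightarrow \S_k(\n, \omegatil). \]
The map $\Phi$ is given by the prescription (\ref{eqn:f-tof-nu}). First I would check $\phi_\f := \Phi(\f)$ is well-defined: by (\ref{GL_decomp}) every $g \in \GL_2(\A_F)$ lies in a unique component indexed by $\nu$, and the remaining non-uniqueness of the decomposition $\gamma x_\nu g_\infty k_0$ is controlled by $x_\nu^{-1} \GL_2(F) x_\nu \cap (\GL_2^+(F_\infty)\K_0(\n)) = x_\nu^{-1} \Gamma_\nu(\n) x_\nu$, so consistency of (\ref{eqn:f-tof-nu}) reduces precisely to the classical modularity $f_\nu||_k \delta = \omegatil(\delta) f_\nu$ for $\delta \in \Gamma_\nu(\n)$. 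Property (1) for $\phi_\f$ is the weight-$k$ transformation $(f_\nu||_k g_\infty r(\theta))({\bf i}) = e^{-ik\theta}(f_\nu||_k g_\infty)({\bf i})$; property (2) is immediate from the formula; property (3) is the Casimir computation already carried out in the proof of Theorem~\ref{attaching_rep}. Cuspidality of $\phi_\f$ follows by unfolding $\int_{F \backslash \A} \phi_\f(n(x)g)\,dx$ against the vanishing of constant terms of $f_\nu||_k \gamma$ at every cusp $\gamma \in \GL_2^+(F)$.

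For the reverse map, given $\phi \in \mathcal{A}_0$ I would define, for each $\nu$ and $z \in \h^n$,
\[ f_\nu(z) \ = \ \phi(x_\nu g_\infty)\, {\rm j}(g_\infty, {\bf i})^k \det(g_\infty)^{-k/2}, \]
where $g_\infty \in \GL_2^+(F_\infty)$ is any element with $g_\infty \cdot {\bf i} = z$. The stabilizer of ${\bf i}$ is $Z(F_\infty)^+ \SO(2)^n$, so independence of the choice of $g_\infty$ reduces to property (1) for the $\SO(2)^n$ ambiguity and to the central character for the center. Modularity $f_\nu||_k \delta = \omegatil(\delta) f_\nu$ for $\delta \in \Gamma_\nu(\n)$ then follows by combining left $\GL_2(F)$-invariance of $\phi$ with property (2), using the identification $\Gamma_\nu(\n) = x_\nu(\GL_2^+(F_\infty)\K_0(\n))x_\nu^{-1} \cap \GL_2(F)$.

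The main obstacle is establishing holomorphy of each $f_\nu$. At the $j$-th infinite place, property (1) pins the $\SO(2)$-type of $\phi$ to $\theta_{-k_j}$, while property (3) fixes the Casimir eigenvalue at $\tfrac{k_j}{2}(1-\tfrac{k_j}{2})$. Together with the central character being trivial on $\R_{>0}$, these constraints identify the archimedean vector as the lowest weight vector in the discrete series $D_{k_j-1}$, and hence force $\phi$ to be annihilated by the antiholomorphic weight-lowering element of $\mathfrak{sl}_2(\C)$ at each infinite place. After the change of variables $g_\infty \mapsto z = g_\infty \cdot {\bf i}$, the Maass-style lowering operator becomes (a multiple of) $\bar\partial / \partial \bar z_j$, so that $\bar\partial f_\nu = 0$ coordinatewise. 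Holomorphy at the cusps and the vanishing of constant terms in the Fourier expansion (\ref{fourier}) transfer from the adelic cuspidality of $\phi$ by the same unfolding argument used above, placing $f_\nu \in \S_k(\Gamma_\nu(\n), \omegatil)$.

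Finally, $\Psi \circ \Phi = \mathrm{id}$ and $\Phi \circ \Psi = \mathrm{id}$ are then essentially tautological: both constructions amount to evaluation at the base point ${\bf i}$ corrected by the weight-$k$ automorphy factor, and an element of $\mathcal{A}_0$ is determined by its values on $\bigsqcup_\nu x_\nu \GL_2^+(F_\infty)$ together with its $\K_0(\n)$-transformation rule. The one nontrivial checkpoint in the bijection is the holomorphy step above; everything else is a routine unpackaging of the strong approximation decomposition (\ref{GL_decomp}) and the definitions of the congruence subgroups $\Gamma_\nu(\n)$ and $\K_0(\n)$.
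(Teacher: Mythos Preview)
Your proposal is correct and follows essentially the same approach as the paper: define mutually inverse maps via the formula (\ref{eqn:f-tof-nu}) and its undoing $f_\nu(z) = \phi(x_\nu^{-\iota} g_\infty)\det(g_\infty)^{-k/2}{\rm j}(g_\infty,{\bf i})^k$, with the holomorphy of $f_\nu$ coming from annihilation by the Cauchy--Riemann/lowering operator (the paper simply cites Gelbart and Gelfand--Graev--Piatetski-Shapiro for this step, whereas you spell out the representation-theoretic reason). One small caution: the paper's inverse formula evaluates $\phi$ at $x_\nu^{-\iota} g_\infty$ rather than $x_\nu g_\infty$, matching the decomposition (\ref{GL_decomp}); you should align your $\Psi$ with that convention.
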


\begin{proof} These are essentially the same spaces defined from different points of view. To see this, observe first that any holomorphic 
Hilbert cusp form $\f$ is in $\mathcal{A}_0(k, \n, \omegatil)$. So it remains to recover a holomorphic Hilbert cusp form from any element 
$\phi$ in $\mathcal{A}_0(k, \n, \omegatil)$. For each element $g=\gamma x_\nu^{-\iota}g_\infty k_0$ of $\GL_2(\A_F)$, put
\[ f_\nu(z)=\phi(x_\nu^{-\iota}g_\infty)\det g_\infty^{-k/2}{\rm j}(g_\infty, \, {\bf i})^k, \] 
where $g_\infty({\bf i})=z$. Holomorphy of $f_\nu$ can be shown by checking that it is annihilated by the first order differential opeator
$\frac{\partial}{\partial x}+i\frac{\partial}{\partial y}$. (As mentioned in Gelbart \cite[Proof of Proposition 2.1]{gelbart}, details of this argument appear in 
Gelfand--Graev--Piatetski-Shapiro\cite[Chapter 1, Section 4]{gelfand-graev-shapiro}.) A direct computation shows that $\f:=(f_1, \cdots, f_h)\in\S_k(\n, \omegatil)$ and $\f=\phi$.
\end{proof}

Going back to the $\f$ that corresponds to the global Whittaker vector $\W$, it is clear that $\f$ belongs to $\mathcal{A}_0(k, \n, \omegatil)$. 
Indeed the first two conditions follow from (\ref{W_expansion}) immediately, and the third condition holds because 
$\Pi_\infty=\otimes D_{k_j-1}$ and it follows that $\Delta\W_\infty=\lambda\W_\infty$ with $\lambda$ given in the theorem. 
Therefore, it now only remains to show that $\f$ is primitive.

To prove that $\f$ is a newform, suppose that there exists an integral ideal $\mathfrak{m}$ that divides $\n$ and 
such that $\f\in\S_k(\mathfrak{m}, \omegatil)$. Writing $\f =(f_1, \cdots, f_h)$ with $f_\nu\in \S_k(\Gamma_\nu(\mathfrak{m}), \omega)$ 
for $\nu=1, \cdots, h$, it shows that $f_\nu ||_k\gamma =\omegatil(\gamma)f_\nu$ for all $\gamma\in\Gamma_\nu(\mathfrak{m})$ which 
contradicts the fact that the conductor of $\Pi$ is $\n$. 

Next, it needs to be proven that $\f$ is a common eigenfunction of the Hecke operators $\T_\p$ for almost all prime ideals $\p$, namely $\p$ not dividing neither $\n$ nor the different $\Dif$. Recall that for such an ideal $\p$, $\K_\p(\n)=\GL_2(\ringO_\p)$, $\f$ is right $\K_\p(\n)$-fixed, and the local representatoin $\Pi_\p$ is equivalent to a spherical representation, $\pi(\chi_{1, \p}, \chi_{2, \p})$, induced from some unramified characters $\chi_{1, \p}$ and $\chi_{2, \p}$. Let $\f_\p^\circ$ be the normalized spherical vector in the induced model. Then $\f_\p^{\circ}$ is an 
eigenfunction of $\T_\p$ with eigenvalue is $q_\p^{1/2} \left( \chi_{1, \p}(\varpi_\p)+\chi_{2, \p}(\varpi_\p)\right)$. (See (\ref{hecke_ope}).) Hence $W_\p^{\circ}$ is an eigenfunction for 
$\T_\p$ with same eigenvalue. It follows from (\ref{fourier_expansion}) that $\f$ is also an eigenfunction for 
$\T_\p$ with same eigenvalue.

Finally, we will prove that $\f$ is normalized, i.e., $c(\ringO, \f)=1$. Note that $\f$ has an expansion, 
\[ \f\left(\left( \begin{array}{ll} y & x \\ & 1 \end{array}\right)\right)=\sum_{\xi\in F_+^{\times}} c(\xi y\ringO, \f)(\xi y_\infty)^{k/2} e^{-2\pi \xi y_\infty} \mu(\xi x), \] 
where $y\in\A_F^\times$ with $y_\infty\in F_{\infty^+}$, $x\in\A_F$, and $\mu$ is some additive character of $\A_F/F$. 
(See, for example, Garrett \cite{garrett} or Shimura \cite{shimura-duke}.) In particular,
\begin{equation*}
 \f\left(\left( \begin{array}{ll} 1 & \\ & 1 \end{array}\right)\right)= \sum_{\xi\in F_+^{\times}} c( \xi\ringO, \f)\xi^{k/2} e^{-2\pi\xi}.
\end{equation*}
On the other hand, by (\ref{W_expansion}), 
\begin{eqnarray*}
 \f\left(\left( \begin{array}{ll} 1 & \\ & 1 \end{array} \right)\right) &=& \sum_{\alpha\in F_+^{\times}}\W\left( \begin{array}{ll} \alpha & \\ & 1 \end{array} \right) \\
 &=& \sum_{0 \ll \alpha \in F^{\times}} e^{-2\pi\alpha}\prod_{v<\infty}\W_v \left( \begin{array}{ll} \alpha & \\ & 1 \end{array} \right).
\end{eqnarray*}
Comparing these expressions, we obtain ${\rm c}(\ringO, \f)=\prod_{v<\infty}\W_v\left(\begin{array}{ll} 1 & \\ & 1 \end{array}\right)=1$ as desired.

This result, together with the argument in the previous section, completes the proof of the correspondence between 
primitive holomorphic Hilbert modular forms in $\S_k(\n, \omegatil)$
and cuspidal automorphic representations of $\GL_2(\A_F)$ over a totally reall number field $F$ 
satisfying the following conditions: the local representations at infinite places are the discrete series representations $D_{k_j-1}$ of the lowest weight $k_j$ for each $j=1, \dots, n$, the conductor is $\n$, and the central character is trivial on the totally positive elements $F_{\infty^+}$ in $\R^n$.

\subsection{L-functions}\label{L_function}

	\subsubsection{L-function attached to $\f$}\label{L_function_f} 
	Let $\f=(f_1, \cdots, f_h)$ be a primitive holomorphic Hilbert modular form of weight $k$, level $\n$, 
	and with a character $\omegatil$. Recall that ${\rm c}(\m, \f)$ is defined to be $a_\nu(\xi)\xi^{-k/2}$ for any integral ideal $\m=\xi t_\nu^{-1}\ringO$, and it is $0$ for $\m$ not integral. The (finite) $L$-function attached to $\f$ is 
\[ L_f(s, \f)=\sum_{\m} \frac{{\rm c}(\m, \f)}{{\rm N}(\m)^{s-k_0/2}},\]
where $\m$ runs through all the integral ideals of $F$. Let $\omega^*$ be a character of the group of ideals prime to $\n$ defined by 
$\omega^*(\p)=\omegatil(\varpi_\p)$ for all prime ideals $\p$ that do not divide $\n$; and let $\omega^*(\p) = 0$ if $\p$ divides $\n$. 
Then, the $L$-function has an Euler product,
\[ L_f(s, \f)=\prod_{\p}\left(1-{\rm c}(\p, \f){\rm N}(\p)^{-s+k_0/2}+\omega^*(\p){\rm N}(\p)^{k_0-1-2s}\right)^{-1}.\]
The product is taken over all the prime ideals $\p$. Define the local factors at infinite places by
\[ L_{\eta_j}(s, \f)=(2\pi)^{-\left(s-\frac{k_0-k_j}{2}\right)}\Gamma\left(s-\frac{k_0-k_j}{2}\right), \] 
and for convenience write
\[ L_\infty(s, \f)=\prod_{j=1}^n L_{\eta_j}(s, \f)=(2\pi)^{-\left(s-\frac{k_0-k}{2}\right)}\Gamma\left(s-\frac{k_0-k}{2}\right). \]
Define the completed $L$-function by 
\[ L(s, \f)=L_f(s, \f)L_\infty(s, \f).\]
The above definitions are all for $\Re(s) \gg 0$. It is part of standard `Hecke Theory' for Hilbert modular forms that $L(s, \f)$ has an analytic continuation to 
all of $\C$ and satisfies a functional equation of the expected kind.

	\subsubsection{L-functions attached to $\Pi$}\label{L_function_pi} Recall the definition of  the $L$-function attached to a cuspidal automorphic representation $\Pi$. First recall the $\GL_1$-theory. For a Hecke character $\chi=\otimes_v \chi_v$, the local $L$-factors at the finite places are given by
\[ \begin{array}{llll}
	L_v(s, \chi_v) & = & (1-\chi_v(\varpi_v)q_v^{-s})^{-1} \hskip 0.3in &  \text{if} \hskip 0.1in \chi_v \hskip 0.1in 
\text{is unramified, and} \\
	L_v(s, \chi_v) & = & 1 & \text{if} \hskip 0.1in \chi_v  \hskip 0.1in \text{is ramified.}
 \end{array} \]

Define the local $L$-factors for $\GL_2$ as follows: if the local representation $\Pi_\p$ at a place $\p$ is equivalent to a principal series representation $\pi(\chi_{1, \p}, \chi_{2, \p})$, then put
\[ L_\p(s, \Pi_\p)=L_\p(s, \chi_{1, \p})L_\p(s, \chi_{2, \p}).\]
Note that both factors are non-trivial if and only if $\Pi_\p$ is spherical. For the other places, define $L_\p(s, \Pi_\p)=1$ for a supercuspidal representation $\Pi_\p$, and 
\[ L_\p(s, \Pi_\p)=L_\p(s + 1/2, \chi_\p) \]
for $\Pi_\p = {\rm St}_{{\rm GL}_2(F_\p)} \otimes \chi_\p$, the twist of the Steinberg representation ${\rm St}_{{\rm GL}_2(F_\p)}$ by $\chi_\p$. 
(See, for example, Kudla \cite[Section 3]{kudla}.) 
At the infinite places, the factors are 
\[ L_{\eta_j}(s, \Pi_{\eta_j})=(2\pi)^{-\left(s+\frac{k_j-1}{2}\right)}\Gamma\left(s+\frac{k_j-1}{2}\right). \]
Again, we use a multi-index notation and write 
\[ L_\infty(s, \Pi_\infty)=(2\pi)^{-\left(s+\frac{k-1}{2}\right)}\Gamma\left(s+\frac{k-1}{2}\right) \]
to mean the product of all $L_{\eta_j}(s, \Pi_{\eta_j})$. The global $L$-function attached to $\Pi$ is
\[ L(s, \Pi)=\otimes_v L_v(s, \Pi_v).\]
The above definitions are all for $\Re(s) \gg 0$. It is part of standard `Hecke Theory', due to Jacquet and Langlands \cite{jacquet-langlands}, for cuspidal representations of $\GL_2(\A)$ 
that $L(s, \Pi)$ has an analytic continuation to 
all of $\C$ and satisfies a functional equation of the expected kind.

	\subsubsection{Relation between $L(s, \f)$ and $L(s, \Pi)$} Having $L$-functions attached to a Hilbert cusp form $\f$ and to a cuspidal automorphic representation $\Pi$, a natural question to ask is how $L(s, \f)$ and $L(s, \Pi(\f))$ relate to each other where $\Pi(\f)$ is a representation attached to a primitive cusp form $\f$. The main theorem of this section is:

\begin{thm}
Let $\f\in\S_k(\n, \omegatil)$ be primitive, and $\Pi(\f)$ a cuspidal automorphic representation attached to $\f$.
Then the completed $L$-functions attached to $\f$ and attached to $\Pi(\f)$ satisfy the following relation:
\[ L\left(s, \Pi(\f)\right)=L\left(s+\frac{k_0-1}{2}, \f\right), \]
where $k_0={\rm Max}\{k_1, \cdots, k_n\}$. The same relation holds between the finite and infinite parts of the two $L$-functions. 
\end{thm}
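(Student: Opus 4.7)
The plan is to establish the identity one place at a time. Since both sides factor as products of local factors over all places (archimedean and finite), it suffices to verify $L_v(s, \Pi(\f)_v) = L_v(s + (k_0-1)/2, \f)$ for each $v$; the statements for the completed, finite, and infinite parts then follow simultaneously.

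First I would dispatch the archimedean places. At $\eta_j$, the definition in \ref{L_function_pi} gives $L_{\eta_j}(s, \Pi_{\eta_j}) = (2\pi)^{-(s + (k_j-1)/2)}\Gamma(s + (k_j-1)/2)$, while from \ref{L_function_f} the shifted Hilbert modular factor is $L_{\eta_j}(s + (k_0-1)/2, \f) = (2\pi)^{-(s + (k_0-1)/2 - (k_0-k_j)/2)}\Gamma(s + (k_0-1)/2 - (k_0-k_j)/2)$, and the $k_0$'s cancel, giving agreement.

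Next, for a prime $\p \nmid \n$, the local representation $\Pi_\p = \pi(\chi_{1,\p}, \chi_{2,\p})$ is an unramified principal series. The Hecke-eigenvalue calculation in (\ref{hecke_ope}) already yields $\chi_{1,\p}(\varpi_\p) + \chi_{2,\p}(\varpi_\p) = q_\p^{1/2} c(\p, \f)$, while matching central characters gives $\chi_{1,\p}\chi_{2,\p}(\varpi_\p) = \omega^*(\p)$. Expanding the product of geometric series yields
$$L_\p(s, \Pi_\p)^{-1} = 1 - q_\p^{1/2} c(\p, \f) q_\p^{-s} + \omega^*(\p) q_\p^{-2s},$$
and after the substitution $s \mapsto s + (k_0-1)/2$ this coincides with the Euler factor of $L_f(\,\cdot\,, \f)$ given in \ref{L_function_f}.

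The main obstacle is $\p \mid \n$, since there is no direct Hecke eigenvalue formula available — we must compute $c(\p^a, \f)$ from the Whittaker/Kirillov data of $\Pi_\p$. Matching the adelic Whittaker expansion (\ref{W_expansion}) with the classical Fourier expansion of $f_\nu$, exactly as done for $\m = \ringO$ at the end of \ref{rep_to_cusp}, gives $c(\m, \f) = \prod_\p W_\p^\circ\bigl(\bigl(\begin{smallmatrix} \xi_\p & \\ & 1 \end{smallmatrix}\bigr)\bigr)$ for $\m = \xi t_\nu^{-1}\ringO$. Via the identification $W_\p^\circ(\bigl(\begin{smallmatrix} x & \\ & 1 \end{smallmatrix}\bigr)) = \kappa_\p^{\rm new}({\sf d}_\p x)$ recorded in \ref{sec:normalized-newvector}, this expresses $c(\p^a, \f)$ directly in terms of the Kirillov new vector. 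I would then run through the case-by-case table in \ref{sec:normalized-newvector} — ramified principal series with exactly one unramified character, unramified twist of Steinberg, and the residual ``all other'' case — reading off the local Dirichlet series $\sum_{a\geq 0} c(\p^a, \f) q_\p^{-a(s-k_0/2)}$ and verifying it agrees, after the shift $s \mapsto s + (k_0-1)/2$, with the local factor $L_\p(s, \Pi_\p)$ of \ref{L_function_pi}. The subtle bookkeeping here is tracking the ${\sf d}_\p$-twist which accounts for the conductor of $\psi_\p$ versus $\psi_\p'$; once this is handled uniformly, the case analysis is short and the global identity (for finite, archimedean, and completed $L$-functions alike) follows by taking the product over all places.
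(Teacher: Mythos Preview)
Your place-by-place strategy is correct and would go through, but it is genuinely different from the paper's argument. The paper does \emph{not} do a case analysis at $\p\mid\n$; instead it runs the global Mellin transform
\[
\int_{\A_F^\times/F^\times}\f\!\left(\begin{smallmatrix}y&\\&1\end{smallmatrix}\right)|y|^{s-1/2}\,d^\times y
\]
in two ways. On one side it unfolds to $\prod_v\zeta_v(s,\W_v)$ and invokes the black-box fact (cited from Gelbart) that the zeta integral of the local new vector equals $L_v(s,\Pi_v)$ at \emph{every} finite place, bad primes included. On the other side it decomposes the id\`ele class group via the narrow class representatives $t_\nu$, plugs in the Fourier expansion of each $f_\nu$, and recognizes the Dirichlet series for $L_f(s+\tfrac{k_0-1}{2},\f)$ directly. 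The archimedean match is then read off from the definitions, and the finite part follows by division.

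What your approach buys is that it is fully explicit and self-contained: you never appeal to the general statement that $\zeta_\p(s,\W_\p^{\mathrm{new}})=L_\p(s,\Pi_\p)$, instead recovering it from the Kirillov table. What the paper's approach buys is brevity and uniformity --- no splitting into supercuspidal/Steinberg/ramified-principal-series cases. One caution if you pursue your route: the vector $\W_\p$ used to build $\f$ in \S\ref{rep_to_cusp} is explicitly \emph{not} the same as the $W_\p^\circ$ of \S\ref{sec:normalized-newvector} (the paper flags this), and your claimed identity $c(\m,\f)=\prod_\p W_\p^\circ\bigl(\begin{smallmatrix}\xi_\p&\\&1\end{smallmatrix}\bigr)$ suppresses the archimedean weight factor $\xi^{-k/2}$ and the $t_\nu$-normalization baked into the definition of $c(\m,\f)$; you would need to track these carefully to make the bad-prime computation honest.
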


\begin{proof}
Let $\Re(s) \gg 0$. For any place $v$ of $F$, and any vector $W_v$ in a local Whittaker model $\Whit(\Pi(\f)_v, \psi_v)$, define a local $\zeta$-integral by
\[ \zeta_v(s, W_v)=\int_{F_v^\times} W_v\left(\begin{array}{cc} \alpha & \\ & 1 \end{array}\right)|\alpha|^{s-1/2} \, d^\times\alpha.\]
A global $\zeta$-integral is similarly defined for $W \in \Whit(\Pi,\psi)$ as
\[ \zeta(s, W) =\int_{\A_F^\times}W\left(\begin{array}{cc} \alpha & \\ & 1 \end{array}\right)|\alpha|^{s-1/2} \, d^\times\alpha.\]
This integral is eulerian, i.e., if the global Whittaker vector $W$ factorizes as $W = \otimes W_v$ into local Whittaker vectors then
$$
\zeta(s, W) =\prod_{v\leq\infty}\zeta_v(s, W_v).
$$

In particular, take $\psi$ to be the additive character that has been fixed, and $W_v$ the normalized new vector $\W_v$ as in 
Section~\ref{rep_to_cusp}. Then, one can show that $L_v(s, \Pi(\f)_v)=\zeta_v(s, \W_v)$. (See, for example, Gelbart~\cite[Proposition 6.17]{gelbart}.)
Therefore $L(s, \Pi(\f))=\prod_v L_v(s, \Pi(\f))=\prod_v \zeta_v(s, \W_v)=\zeta(s, \W)$. On the other hand, we have
\begin{eqnarray*}
\int_{\A_F^\times/F^\times} \f\left(\begin{array}{cc} y & \\ & 1 \end{array}\right) |y|^{s-1/2}\, d^\times y &=& \int_{\A_F^\times/F^\times} \sum_{\alpha\in F^\times} \W\left(\begin{array}{cc} \alpha y & \\ & 1 \end{array}\right)|\alpha y|^{s-1/2}\, d^\times\alpha \\
&=& \int_{\A_F^\times}\W\left(\begin{array}{cc} \alpha & \\ & 1 \end{array}\right)|\alpha|^{s-1/2}\, d^\times\alpha.
\end{eqnarray*}

We recall that $\A_F^\times/F^\times=\cup_{\nu=1}^h t_\nu^{-1}F^\times_{\infty^+}\prod\ringO_\p^\times$ (a disjoint union), and it follows that, for any $y\in\A_F^\times$, 
\[ \f\left(\begin{array}{cc} y & \\ & 1 \end{array}\right)|y|^{s-1/2}=y_\infty^{k/2}f_\nu({\bf i}y_\infty)\left(|t_\nu|^{-1}|y_\infty|\right)^{s-1/2},\]
with a unique $\nu$ where $y_\infty$ is the infinite part of $y$ and ${\bf i}=(i,\dots, i)$. Hence
\begin{eqnarray*}
L(s, \Pi(\f)) &=& \int_{\A_F^\times/F^\times} \f\left(\begin{array}{cc} y & \\ & 1 \end{array}\right) |y|^{s-1/2}\, d^\times y \\
&=& \sum_{\nu=1}^h\int_{F_{\infty^+}} f_\nu({\bf i}y)y^{s+\frac{k-1}{2}}|t_\nu|^{-(s-1/2)}\, \frac{dy}{y}. 
\end{eqnarray*}
Applying the Fourier expansion $f_\nu(z)=\sum_{\xi} a_\nu(\xi)\exp(2\pi i \xi z)$, the proof can be completed as follows.
\begin{eqnarray*}
L(s, \Pi(\f)) &=& \sum_{\nu, \xi} \frac{a_\nu(\xi)\xi^{-k/2}}{(2\pi \xi)^{s-1/2}|t_\nu|^{-(s-1/2)}}\int_{F_{\infty^+}} e^{-y}y^{s+\frac{k-1}{2}}\, \frac{dy}{y} \\
&=& (2\pi)^{-\left(s+\frac{k-1}{2}\right)}\Gamma\left(s+\frac{k-1}{2}\right) \sum_{\m} \frac{{\rm c}(\m, \f)}{{\rm N}(\m)^{s-1/2}} \\
&=& L\left(s+\frac{k_0-1}{2}, \f\right).
\end{eqnarray*}
The equality is for $\Re(s) \gg 0$. Both sides have analytic continuation to all of $\C$ and hence we have equality everywhere.

From the definitions of the infinite parts of the two $L$-functions, we see that 
$$
L_{\infty}(s, \Pi(\f)_{\infty}) = L_{\infty}\left(s+\frac{k_0-1}{2}, \f\right). 
$$
It follows that the same relations hold for the finite part since $L_f(s,\Pi) = L(s,\Pi)/L_{\infty}(s,\Pi_{\infty})$, and similarly, 
$L_f(s,\f) = L(s,\f)/L_{\infty}(s,\f).$
\end{proof}

\subsection{The action of ${\rm Aut}(\C)$ and rationality fields}

	\subsubsection{The action of ${\rm Aut}(\C)$ on Hilbert modular forms} Let $\sigma$ be an automorphism of $\C$, and
	 let it act on $\R^n = \prod_{j=1}^n F_{\eta_j}$ by permuting the coordinates. 
	Then $\sigma\circ\eta$ gives another embedding of $F$ into $\R^n$. Considering $\sigma(\xi^k)=\prod_{j=1}^n \sigma(\eta_j(\xi))^{k_j}$ for $\xi\in F$ and $k\in\Z^n$, we can view $\sigma$ as a permutation of $\{k_j\}$. We will use this identification from now on, and denote it as $k^\sigma$. 

Let $f$ be a Hilbert modular form of weight $k$, level $\n$, with a character $\omegatil$, and write its Fourier expansion as $\dis{f(z)=\sum_{\xi}a_\nu(\xi)\exp(2\pi i\xi z)}$. We define $f^\sigma$ to be 
\[ f^\sigma (z)=\sum_\xi a_\nu^\sigma(\xi)\exp(2\pi i \xi z_j),\]
with $a_\nu^\sigma(\xi)=\sigma(a_\nu(\xi))$. We have the following

\begin{prop}[Shimura, \cite{shimura-duke}]
Let $\sigma \in {\rm Aut}(\C)$. If $f\in\M_k(\Gamma, \omega)$, then $f^\sigma\in\M_{k^\sigma}(\Gamma, \omega^\sigma)$, where $\omega^\sigma=\sigma\circ\omega$.
\end{prop}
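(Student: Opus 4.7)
The plan is to verify the three defining properties of $\M_{k^\sigma}(\Gamma,\omega^\sigma)$ for $f^\sigma$: holomorphy on $\h^n$, holomorphy at the cusps, and the slash-action identity $f^\sigma||_{k^\sigma}\gamma=\omega^\sigma(\gamma)f^\sigma$ for $\gamma\in\Gamma$. Holomorphy on $\h^n$ and at the cusps is essentially formal: the Fourier series for $f^\sigma$ is supported on exactly the same totally positive (or zero) elements $\xi$ that appear for $f$, and the conjugated coefficients $\sigma(a_\nu(\xi))$ admit the same polynomial growth bounds as the originals (one can reduce to the case where $f$ lies in a finite-dimensional space defined over a number field, on which $\sigma$ then permutes a basis). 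Consequently, the series for $f^\sigma$ converges absolutely and uniformly on compacta, giving holomorphy on $\h^n$, and the support condition yields holomorphy at all cusps.

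The core of the proposition is the transformation rule. Rather than attacking the Fourier expansion directly under a general $\gamma\in\Gamma_\nu(\n)$, where $\gamma$ mixes cusps and introduces awkward combinatorics, I would recast the statement adelically. Starting from the adelic function $\f$ attached to $(f_1,\ldots,f_h)$ through \eqref{eqn:f-tof-nu}, the classical transformation rules for all the $f_\nu$ collapse into the single assertion that $\f$ is left $\GL_2(F)$-invariant and right-semi-invariant under $\K_0(\n)$ via $\omegatil_f$. I would then define $\f^\sigma$ as the adelic function associated to $(f_1^\sigma,\ldots,f_h^\sigma)$ of weight $k^\sigma$ and character $\omegatil^\sigma$ via the same formula \eqref{eqn:f-tof-nu}, and verify that $\f^\sigma$ remains left $\GL_2(F)$-invariant. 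The archimedean factor $\det(g_\infty)^{k^\sigma/2}{\rm j}(g_\infty,{\bf i})^{-k^\sigma}$ is obtained from $\det(g_\infty)^{k/2}{\rm j}(g_\infty,{\bf i})^{-k}$ by applying $\sigma$ through its permutation action on the real embeddings; the nebentypus factor at finite places is $\sigma\circ\omegatil_f$, which equals $\widetilde{\omega^\sigma}_f$ by the very definition $\omega^\sigma=\sigma\circ\omega$ recalled in \ref{character}; and the Fourier coefficients are $\sigma$-conjugated by construction. Hence $\sigma$ transports the identity $\f(\gamma g)=\f(g)$ for $\gamma\in\GL_2(F)$ into the identity $\f^\sigma(\gamma g)=\f^\sigma(g)$, which then unwinds to the desired classical transformation law for $f^\sigma$ on each of the $h$ components.

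The hard part will be bookkeeping, because $\sigma$ plays two conceptually distinct roles simultaneously: as a field automorphism acting on the coefficients $a_\nu(\xi)$ and on the $\overline{\Q}$-valued character $\omega$, and as a permutation of the real embeddings $\{\eta_j\}$ which reshuffles the weight $k$ into $k^\sigma$. Matching these two actions is precisely the content of the proposition. The cleanest way to present the verification is to first check it on the easy generators of $\Gamma_\nu(\n)$---unipotent matrices with entries in $\Dif^{-1}$ or $\n\Dif$, and diagonal unit matrices---where the action on Fourier coefficients and on the slash factor are both transparent, and then invoke the adelic reformulation above to extend the verification uniformly to all of $\Gamma_\nu(\n)$ without having to explicitly decompose an arbitrary $\gamma$.
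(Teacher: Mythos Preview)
The paper does not give its own proof of this proposition; it is stated with attribution to Shimura \cite{shimura-duke} and used as a black box. So there is no ``paper's proof'' to compare against---but your proposal should still be assessed on its own merits, and it has a genuine gap.

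The difficulty you are glossing over is the heart of Shimura's theorem. Your adelic argument claims that ``$\sigma$ transports the identity $\f(\gamma g)=\f(g)$ into $\f^\sigma(\gamma g)=\f^\sigma(g)$,'' but there is no mechanism for this transport. The function $\f^\sigma$ is \emph{not} obtained from $\f$ by applying $\sigma$ to its values: the values $\f(g)$ involve the transcendental factors $e^{2\pi i\xi z}$ evaluated at $z=g_\infty({\bf i})$, and $\sigma$ applied to Fourier coefficients does not produce $\sigma$ applied to the sum. Concretely, if $\gamma g = x_\mu^{-\iota}g_\infty'k_0'$ and $g=x_\nu^{-\iota}g_\infty k_0$, then $\f(\gamma g)=\f(g)$ is a relation between $(f_\mu||_k g_\infty')({\bf i})$ and $(f_\nu||_k g_\infty)({\bf i})$; there is no way to extract from this a relation between $(f_\mu^\sigma||_{k^\sigma} g_\infty')({\bf i})$ and $(f_\nu^\sigma||_{k^\sigma} g_\infty)({\bf i})$ without already knowing the result. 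Your fallback of checking on ``easy generators'' (unipotents and diagonal units) is fine as far as it goes, but $\Gamma_\nu(\n)$ is not generated by such elements: the essential content is invariance under elements with nonzero lower-left entry, and for those the Fourier expansion does not transform term-by-term.

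The parenthetical in your holomorphy paragraph---``one can reduce to the case where $f$ lies in a finite-dimensional space defined over a number field, on which $\sigma$ then permutes a basis''---is in fact the entire theorem, not a reduction step. Shimura's proof establishes precisely this rational structure on $\M_k(\Gamma,\omega)$, and it requires genuine input: either the algebraic geometry of Hilbert--Blumenthal varieties and the $q$-expansion principle, or (as in Shimura's original approach) the theory of canonical models and CM points. Neither is elementary, and neither is bypassed by the adelic reformulation.
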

	
In order to attain a similar result in the ad\`elic setting, we normalize $f^\sigma$ as follows: For $f_\nu\in\M_k(\Gamma_\nu, \omega)$ with $\Gamma_\nu$ defined in Section~\ref{classic_hilbert}, put
\[ f_\nu^{[\sigma]}=f_\nu^\sigma\cdot\left({\rm N}(t_\nu)^{k_0/2}\right)^\sigma{\rm N}(t_\nu)^{-k_0/2}, \] 
where $k_0={\rm Max}\{k_1, \cdots, k_n\}$. If $\f$ is a holomorphic Hilbert modular form given as $\f=(f_1, \cdots, f_h)$, we define $\f^\sigma$ to be $\f^\sigma=(f_1^{[\sigma]}, \cdots, f_h^{[\sigma]})$.
	
\begin{prop}[Shimura, \cite{shimura-duke}]\label{f_sigma}
Let $\f=(f_1, \cdots, f_h)$ be in $\M_k(\n, \omegatil)$, and $\sigma  \in {\rm Aut}(\C)$. Then $\f^\sigma\in\M_{k^\sigma}(\n, \omegatil^\sigma)$, and 
$N(\m)^{k_0/2}{\rm c}(\m, \f^\sigma)=(N(\m)^{k_0/2}{\rm c}(m, \f))^\sigma$. Furthermore, $\f^\sigma$ is primitive whenever $\f$ is.
\end{prop}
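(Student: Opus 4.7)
The plan is to prove the three assertions in sequence: first the transformation law $\f^\sigma \in \M_{k^\sigma}(\n,\omegatil^\sigma)$, then the Fourier coefficient identity, then the preservation of primitivity. The first reduces componentwise to the classical Shimura proposition already quoted; the second is a bookkeeping identity in which the scalar $({\rm N}(t_\nu)^{k_0/2})^\sigma{\rm N}(t_\nu)^{-k_0/2}$ in the definition of $f_\nu^{[\sigma]}$ is exactly what forces the ad\`elic normalization to come out right; the third follows from (i), (ii), and a symmetry argument using $\sigma^{-1}$.

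For the transformation law, I would apply the preceding classical Shimura proposition to each $f_\nu$ to get $f_\nu^\sigma\in\M_{k^\sigma}(\Gamma_\nu(\n),\omega^\sigma)$; rescaling by a nonzero scalar keeps $f_\nu^{[\sigma]}$ in the same classical space. It then remains to verify that the tuple $(f_1^{[\sigma]},\dots,f_h^{[\sigma]})$ glues via the formula \eqref{eqn:f-tof-nu} into an ad\`elic form whose nebentypus on $\K_0(\n)$ comes out as $\omegatil^\sigma=\sigma\circ\omegatil$. This is a direct check against each coset of the decomposition \eqref{GL_decomp}; the scaling factor $({\rm N}(t_\nu)^{k_0/2})^\sigma{\rm N}(t_\nu)^{-k_0/2}$ is calibrated so the $h$ pieces cohere across narrow-class components into a single ad\`elic form with the right central character, without a spurious character of the narrow class group slipping in.

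For the Fourier coefficient relation, writing $\m=\xi t_\nu^{-1}\ringO$ with $\xi\in F_+^\times$ and using ${\rm N}(\m)={\rm N}(\xi){\rm N}(t_\nu)^{-1}$, I would expand
\[
{\rm N}(\m)^{k_0/2}\,{\rm c}(\m,\f)\;=\;a_\nu(\xi)\,{\rm N}(t_\nu)^{-k_0/2}\prod_{j=1}^n\eta_j(\xi)^{(k_0-k_j)/2},
\]
and similarly for $\f^\sigma$, with $k$ replaced by $k^\sigma$ and $a_\nu$ by $a_\nu^{[\sigma]}$. Applying $\sigma$ to the displayed identity and reindexing the product using $\sigma\circ\eta_j=\eta_{\sigma(j)}$ together with $k_j^\sigma=k_{\sigma^{-1}(j)}$, I would match the product factors on the two sides; the remaining scalar identity reduces to $\sigma({\rm N}(t_\nu)^{k_0/2})^2={\rm N}(t_\nu)^{k_0}$, which holds because ${\rm N}(t_\nu)^{k_0}\in\Q$. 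The step I expect to be the main technical obstacle is the consistent handling of the half-integer powers: one must fix a branch (the positive real root) for each factor of the form $\eta_j(\xi)^{(k_0-k_j)/2}$ and ${\rm N}(t_\nu)^{k_0/2}$, and then check that the sign discrepancies introduced by $\sigma$ cancel across the two sides. The cancellation is automatic here because the accumulated sign ratio squares to $1$ after invoking rationality of ${\rm N}(t_\nu)^{k_0}$ and ${\rm N}(\xi)^{k_0}\,\xi^{-k-k^\sigma}$.

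For primitivity, the normalization ${\rm c}(\ringO,\f^\sigma)=1$ is immediate from the Fourier coefficient identity applied at $\m=\ringO$. The Hecke operator $\T_\p$ for $\p\nmid\n\Dif$ is given by the finite rational coset sum \eqref{hecke_sum}, and standard Galois equivariance of such a sum gives $\T_\p\f^\sigma=\sigma(\lambda_\p)\f^\sigma$ whenever $\T_\p\f=\lambda_\p\f$; by Miyake's multiplicity-one result cited in Section~\ref{primitive_form}, $\f^\sigma$ is then automatically an eigenfunction for all $\T_\m$. Finally, newness is preserved by a symmetry argument: if $\f^\sigma$ were to lie in $\S_{k^\sigma}^{\rm old}(\n,\omegatil^\sigma)$, expressing it as a combination of translates ${\bf g}_\mathfrak{a}$ with ${\bf g}$ of strictly smaller level and applying $\sigma^{-1}$ (using that $[\,\cdot\,]$ is an involution-compatible normalization, so $(\f^\sigma)^{\sigma^{-1}}=\f$) would exhibit $\f$ as an old-form in $\S_k^{\rm old}(\n,\omegatil)$, contradicting the primitivity of $\f$.
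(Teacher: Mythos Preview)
The paper gives no proof of this proposition; it is stated with attribution to Shimura~\cite{shimura-duke} and used as a black box, exactly like the preceding componentwise proposition. So there is no argument in the paper to compare yours against.

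Your reduction of the ad\`elic statement to the componentwise classical one is the right shape, and the bookkeeping for the Fourier coefficients and the primitivity argument (Hecke-equivariance of the rational coset sum, Miyake multiplicity-one, $\sigma^{-1}$-symmetry for newness) are all sound. One point to sharpen: your treatment of the half-integer powers stops at ``the sign ratio squares to $1$,'' which only pins things down to $\pm 1$. In fact the $N(t_\nu)^{k_0/2}$ signs cancel automatically (both sides pick up the same $\epsilon_\nu=\sigma(N(t_\nu)^{k_0/2})/N(t_\nu)^{k_0/2}$), but the $\xi$-factors $\prod_j\eta_j(\xi)^{(k_0-k_j)/2}$ are only unambiguous algebraic numbers when $(k_0-k_j)/2\in\Z$, i.e., under the parity hypothesis $k_1\equiv\cdots\equiv k_n\pmod 2$. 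Under that hypothesis your reindexing $\sigma\circ\eta_j=\eta_{\sigma(j)}$, $(k^\sigma)_i=k_{\sigma^{-1}(i)}$ goes through with no sign ambiguity at all. Since the paper only ever invokes this proposition under the parity hypothesis (see Theorem~\ref{c_equiv}), you may as well impose it and drop the hedging about branches.
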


	\subsubsection{${\rm Aut}(\C)$-equivariance of the dictionary} Proposition~\ref{f_sigma} guarantees that $\f^\sigma$ is a primitive holomorphic Hilbert modular form if $\f$ is. Therefore, by the bijection discussed in Section~\ref{cusp_to_rep} and Section~\ref{rep_to_cusp}, there exists a cuspidal automorphic representation $\Pi(\f^\sigma)$ of a certain type that corresponds to $\f^\sigma$. Now, the question is: how one can compare the ${\rm Aut}(\C)$-action on the space of Hilbert modular forms with the ${\rm Aut}(\C)$-action on the space of cuspidal automorphic representations? The obvious guess that  
	$\Pi(\f^\sigma) = \Pi(\f)^\sigma$ is not quite correct; indeed, $\Pi(\f)^\sigma$ may not even be an automorphic representation. The following theorem answers our question; the proof also includes verifications of some of the arithmetic properties stated in Theorem~\ref{thm:dictionary}.
	
\begin{thm}\label{c_equiv}
Let $\f\in\S_k(\n, \omegatil)$ be primitive, with $k=(k_1, \cdots, k_n)$. Assume that $k_1\equiv\cdots\equiv k_n \mod \, 2$. Then the map $\f\mapsto \Pi(\f)\otimes|\ |^{k_0/2}$ is ${\rm Aut}(\C)$-equivariant, where $k_0={\rm Max}\{k_1, \cdots, k_n\}$.
\end{thm}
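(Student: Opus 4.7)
The plan is to reduce to a local check at almost all finite primes via strong multiplicity one. First, both sides are cuspidal automorphic representations of $\GL_2(\A_F)$: the right-hand side $\Pi(\f^\sigma) \otimes |\ |^{k_0/2}$ arises from the dictionary of Theorem~\ref{attaching_rep} applied to $\f^\sigma$, which is primitive by Proposition~\ref{f_sigma}; the left-hand side ${}^\sigma(\Pi(\f) \otimes |\ |^{k_0/2})$ is the Galois translate, as in Theorem~\ref{thm:q-pi}, of the algebraic representation $\Pi(\f) \otimes |\ |^{k_0/2}$ (algebraicity is Theorem~\ref{thm:dictionary}(2), which hinges precisely on the parity hypothesis on $k$). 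Hence by Theorem~\ref{strong_mult} it suffices to verify that the two sides have isomorphic local components at every prime $\p$ not dividing $\n\Dif_F$, where both are unramified principal series.

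At such a $\p$, write $\Pi(\f)_\p = \pi(\chi_{1,\p},\chi_{2,\p})$ with Satake parameters $\alpha_{i,\p} = \chi_{i,\p}(\varpi_\p)$. The $L$-factor identity of Section~\ref{L_function}---matching the Euler factors of $L(s,\Pi(\f))$ and $L(s + (k_0-1)/2,\f)$---pins them down: $\alpha_{1,\p} + \alpha_{2,\p} = c(\p,\f)\,q_\p^{1/2}$ and $\alpha_{1,\p}\alpha_{2,\p} = \omegatil_\p(\varpi_\p)$. Twisting by $|\ |^{k_0/2}$ multiplies each Satake parameter by $q_\p^{-k_0/2}$. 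The Galois action on a spherical component is, as recorded in the proof of Proposition~\ref{prop:new-rational}, expressed via the twisted action ${}^{\sigma'}\chi(x) = |x|^{-1/2}\sigma(\chi(x)|x|^{1/2})$ on the inducing characters. Applying this to the twisted Satake parameters of $\Pi(\f)\otimes|\ |^{k_0/2}$ gives a trace of $q_\p^{1/2}\sigma\bigl(c(\p,\f)\,q_\p^{-k_0/2}\bigr)$, while the corresponding trace for $\Pi(\f^\sigma)\otimes|\ |^{k_0/2}$ is $c(\p,\f^\sigma)\,q_\p^{(1-k_0)/2}$. The equality of these two quantities, using Proposition~\ref{f_sigma} in the form $N(\p)^{k_0/2} c(\p,\f^\sigma) = \sigma(N(\p)^{k_0/2} c(\p,\f))$, reduces to the obvious identity $\sigma(N(\p)^{k_0}) = N(\p)^{k_0}$. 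The matching of determinants (i.e., of central characters) is a simpler version of the same calculation.

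I expect the main obstacle to be bookkeeping: the factor $q_\p^{1/2}$ entering from the Hecke eigenvalue relation~(\ref{hecke_ope}), the shift $q_\p^{-k_0/2}$ coming from the $|\ |^{k_0/2}$-twist, and the further $q_\p^{1/2}$ built into the twisted Galois action ${}^{\sigma'}$ all have to be tracked consistently so that the half-integer powers cancel. Note that the parity hypothesis is used only at the very beginning, to ensure that $\Pi(\f)\otimes|\ |^{k_0/2}$ is algebraic so that the Galois action of Theorem~\ref{thm:q-pi} makes sense and produces another cuspidal automorphic representation; the Satake-parameter identity itself works for any positive integer $k_0$. Once local isomorphism at almost all unramified $\p$ is established, strong multiplicity one concludes the proof.
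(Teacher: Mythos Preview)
Your proposal is correct and follows essentially the same route as the paper: establish algebraicity from the parity hypothesis so that Theorem~\ref{thm:q-pi} applies, then reduce via strong multiplicity one to a Satake-parameter comparison at unramified primes using Waldspurger's formula for ${}^{\sigma}\pi(\chi_1,\chi_2)$ and Proposition~\ref{f_sigma}. The only cosmetic difference is that the paper first untwists by $|\ |^{-k_0/2}$, setting $\Pi' := ({}^{\sigma}(\Pi(\f)\otimes|\ |^{k_0/2}))\otimes|\ |^{-k_0/2}$ and checking $\Pi'_\p \simeq \Pi(\f^\sigma)_\p$, whereas you keep the twist on both sides throughout; the resulting identities are the same up to a factor of $q_\p^{-k_0/2}$ on each Satake parameter.
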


\begin{proof}
First, let us note that $\Pi(\f)$ is algebraic if $k_0\equiv 0 \mod \, 2$, and $\Pi(\f) \otimes |\ |^{1/2}$
is  algebraic when $k_0\equiv 1 \mod \, 2$; these follow easily from \ref{sec:algebraic}. These cases may be uniformized by considering the twist 
$\Pi(\f)$ by $|\ |^{k_0/2}$ to say that $\Pi(\f)\otimes |\ |^{k_0/2}$ is an algebraic cuspidal automorphic representation for all $k$ that satisfy the parity condition in the hypothesis. Further, if $k_j \geq 2$ for all $j$, then $\Pi(\f)\otimes|\ |^{k_0/2}$ is a regular algebraic cuspidal automorphic representation; this can be seen immediately from \ref{sec:reg-alg-coh} after one notes that 
$\Pi(\f)\otimes|\ |^{k_0/2} \in {\rm Coh}(G, \mu^{\sf v})$, where the weight $\mu = (\mu_1,\dots,\mu_n)$ is given by: 
$$
\mu_j = \left( \frac{k_0+k_j-2}{2}, \frac{k_0-k_j+2}{2} \right).
$$
(Let us add a comment about $k_j \geq 2$. Even in the elliptic modular case, a weight $1$ form is not of motivic type; another facet of the same phenomenon is that the associated representation after twisting by $|\ |^{1/2}$ is algebraic but not regular; or that the associated $L$-function has no critical points.) 

By Theorem~\ref{thm:q-pi}, the representation $(\Pi(\f)\otimes|\ |^{k_0/2})^\sigma$ is also a regular algebraic cuspidal automorphic representation.
Let us note that this representation, however, does not have an appropriate central character to apply the ``dictionary." In order to modify the central character, twist it by $|\ |^{-k_0/2}$ and consider the representation $\Pi^\prime:=(\Pi(\f)\otimes|\ |^{k_0/2})^\sigma\otimes|\ |^{-k_0/2}$. This representation is cuspidal and automorphic, whose local representations at infinity places are 
$\dis{\otimes_\eta \Pi(\f)_{\sigma^{-1}\eta}}$, i.e., the permutation of the discrete series representations $\{D_{k_j-1}\}$, and such that the conductor is $\n$, and that the central character is trivial on $F_{\infty^+}$. Therefore, by Section~\ref{rep_to_cusp}, there is a primitive holomorphic Hilbert modular form of weight $k^\sigma$ and level $\n$. It remains to show that this cusp form is actually $\f^\sigma$, and that the central character of 
$\Pi^\prime$ is $\omegatil^\sigma$. 

By Theorem~\ref{strong_mult}, it is enough to show that $\Pi^\prime_\p$ coincides with $\Pi(\f^\sigma)_\p$ for almost all finite places $\p$. In particular, let $\p$ be a place of $F$ that does not divide $\n$. Then, the local representation $\Pi^\prime_\p$ at $\p$ is a spherical representation, say induced from $\chi_{1, \p}^\prime$ and $\chi_{2, \p}^\prime$, and write it as $\Pi^\prime_\p=\pi\left( \chi_{1, \p}^\prime, \, \chi_{2, \p}^\prime\right)$. We use the following lemma to see these characters more carefully.

\begin{lemma}[Waldspurger, \cite{waldspurger}]
Let $\Pi=\pi(\chi_1, \chi_2)$ be a spherical representation induced from $\chi_1$ and $\chi_2$, then $^\sigma\Pi$ is also spherical, and it is induced from characters defined as $|\,\,|^{-1/2}\sigma(\chi_i\cdot| \,\,|^{1/2})$, where $i=1$, $2$.
\end{lemma}

Let $\Pi(\f)_\p=\pi(\chi_{1, \p},\,\chi_{2, \p})$. By the lemma above, the characters $\chi_{i, \p}^\prime$ can be described as  
\[ \chi_{i, \p}^\prime=|\,\,|^{-\frac{k_0+1}{2}}_\p\sigma\left(\chi_{i, \p}\cdot|\,\,|^{\frac{k_0+1}{2}}_\p\right). \] 
Therefore, a direct computation shows that
\begin{eqnarray*}
\left( \chi_{1, \p}^\prime +\chi_{2, \p}^\prime \right)( \varpi_\p ) &=& q_\p^{-\frac{k_0+1}{2}}\left(q_\p^{\frac{k_0+1}{2}}\right)^\sigma \left(q_\p^{\frac12}{\rm c}(\p, \f)\right)^\sigma  \\
&=& q_\p^{-\frac{k_0-1}{2}}\left(q_\p^{\frac{k_0}{2}}\right)^\sigma{\rm c}(\p, \f)^\sigma \\
&=& q_\p^{1/2}{\rm c}(\p, \f^\sigma ) \hskip 0.5in \text{by (\ref{f_sigma})},
\end{eqnarray*}
and
\[ \chi_{1, \p}^\prime \cdot\chi_{2, \p}^\prime = \sigma(\omegatil).\]

This says that $q_\p^{1/2}\left(\chi_{1, \p}^\prime +\chi_{2, \p}^\prime \right)(\varpi_\p)$ gives the eigenvalue for the Hecke operator $\T_\p$ applied to $\f^\sigma$, which can be seen by the same computation done in (\ref{hecke_ope}), and that $\chi_{1, \p}^\prime \cdot\chi_{2, \p}^\prime$ is the central character $\omegatil^\sigma$ of $\Pi(\f^\sigma)$. This completes the proof of Theorem~\ref{c_equiv}.
\end{proof}

\subsubsection{Rationality fields}
Let $\f$ be a primitive form in $\S_k(\n, \omegatil).$ Define the rationality field of $\f$ as
\begin{equation}
\label{eqn:q-f}
\Q(\f) := \Q(\{N(\m)^{k_0/2}{\rm c}(\m, \f) : \mbox{for all integral ideals $\m$} \}).
\end{equation}
This is the field generated over $\Q$ by the normalized eigenvalues of $\f$ for all the Hecke operators $T_\m$. 
Shimura \cite[Proposition 2.8]{shimura-duke} proves that this field is a number field which is in fact generated by $N(\p)^{k_0/2}{\rm c}(p, \f)$ for almost all prime ideals $\p$. Further, this number field is either totally real, or a totally imaginary quadratic extension of a totally real number field. 

Let $\Pi = \Pi(\f)$, and we have checked that $\Pi(\f)\otimes | \ |^{k_0/2}$ is a regular cuspidal automorphic representation. Define the rationality field
$\Q(\Pi)$ to be
\begin{equation}
\label{eqn:q-pi}
\Q(\Pi) := \C^{ \{ \sigma \in {\rm Aut}(\C) : {}^{\sigma} \Pi_f = \Pi_f \} }.
\end{equation}
That is the subfield of $\C$ fixed by the group $\{ \sigma \in {\rm Aut}(\C) : {}^{\sigma} \Pi_f = \Pi_f \}$ of all $\C$-automorphisms which fix $\Pi_f$. 
By strong multiplicity one, ${}^{\sigma} \Pi_f = \Pi_f$ if and only if ${}^{\sigma} \Pi_\p = \Pi_\p$ for almost all prime ideals $\p$. 

Using Proposition~\ref{f_sigma} it is clear that 
$\Q(\f)$ is the   subfield of $\C$ fixed by the group $\{ \sigma \in {\rm Aut}(\C) :  \sigma(N(\m)^{k_0/2}{\rm c}(\m, \f)) = N(\m)^{k_0/2}{\rm c}(\m, \f)) \}$. It follows that 
$\Q(\f) = \Q(\Pi(\f))$.

\subsection{Proof of Theorem~\ref{thm:shimura} and period relations}
The proof Theorem~\ref{thm:shimura} is a totally formal consequence of Theorem~\ref{thm:central} plus Corollary~\ref{cor:all}, together with properties of the dictionary as in Theorem~\ref{thm:dictionary}; we leave the details to the reader after observing, as mentioned in the proof of Theorem~\ref{c_equiv} above,
that if $\f \in \S_k(\n,\omegatil)$ and suppose for convenience that all the weights $k_j$ are even, then 
$\Pi = \Pi(\f) \in {\rm Coh}(G,\mu^{\sf v})$ with the highest weight $\mu = (\mu_1,\dots,\mu_n)$ being given by:
$$
\mu_j = ((k_j-2)/2, -(k_j-2)/2) =: (a_j,b_j).
$$

Note that Shimura's periods $u(r, {\bf f})$ and our periods $p^{\epsilon}(\Pi)$ have different definitions. With this in mind, it is interesting to 
see the formal consequences of the fact that these periods 
appear in the critical values of the `same' $L$-function. Using the notation as in Theorem~\ref{thm:dictionary}, when all the weights $k_j$ are even integers, 
then $k_0/2$ is a critical point for $L(s, {\bf f})$, which corresponds to the central critical point $1/2$ for $L(s, \Pi({\bf f}))$. 
We have the following consequence: 
$$
(2\pi i)^{\frac{nk_0}{2}}  u(+\!+, {\bf f}) \ \sim \ 
(2\pi i)^{d_{\infty}} p^{+\!+}(\Pi({\bf f})).
$$
where $\sim$ means up to an element of $\Q({\bf f})^* = \Q(\Pi({\bf f}))^*$.
Note that $d_{\infty} = \sum_j (a_j+1) = (\sum_j k_j)/2$. Hence 
\begin{equation}
\label{eqn:period-relations}
p^{+\!+}(\Pi({\bf f})) \ \sim \ 
(2\pi i)^{\sum_j (k_0-k_j)/2} \, u(+\!+, {\bf f}). 
\end{equation}
Twisting by a quadratic character of prescribed signature, one can deduce a similar relation between $u(\epsilon, {\bf f})$ and $p^{\epsilon}(\Pi)$ for 
any $\epsilon \in (\Z/2\Z)^n$. Similarly, one can deduce period relations when all the weights $k_j$ are odd.


\bigskip

\bigskip

Department of Mathematics,

Oklahoma State University,

401 Mathematical Sciences,

Stillwater, OK 74078, USA. 

\bigskip

{\tt araghur@math.okstate.edu} 

\smallskip

{\tt ntanabe@math.okstate.edu}

\end{document}